\documentclass[11pt]{amsart}

\usepackage[T1]{fontenc}
\usepackage[utf8]{inputenc}
\usepackage{lmodern}
\usepackage{microtype}
\usepackage{amsmath,amssymb,amsthm,mathtools,mathrsfs}
\usepackage{enumitem}
\usepackage{aliascnt}
\usepackage[colorlinks=true,urlcolor=blue,linkcolor=black,citecolor=black]{hyperref}
\usepackage[nameinlink,noabbrev]{cleveref}

\allowdisplaybreaks
\numberwithin{equation}{section}

\newtheorem{theorem}{Theorem}[section]

\newaliascnt{proposition}{theorem}
\newtheorem{proposition}[proposition]{Proposition}
\aliascntresetthe{proposition}

\newaliascnt{lemma}{theorem}
\newtheorem{lemma}[lemma]{Lemma}
\aliascntresetthe{lemma}

\newaliascnt{corollary}{theorem}
\newtheorem{corollary}[corollary]{Corollary}
\aliascntresetthe{corollary}

\newaliascnt{claim}{theorem}

\aliascntresetthe{claim}

\theoremstyle{definition}
\newaliascnt{definition}{theorem}

\aliascntresetthe{definition}

\theoremstyle{remark}
\newaliascnt{remark}{theorem}
\newtheorem{remark}[remark]{Remark}
\aliascntresetthe{remark}

\crefname{theorem}{theorem}{theorems}
\Crefname{theorem}{Theorem}{Theorems}
\crefname{proposition}{proposition}{propositions}
\Crefname{proposition}{Proposition}{Propositions}
\crefname{lemma}{lemma}{lemmas}
\Crefname{lemma}{Lemma}{Lemmas}
\crefname{corollary}{corollary}{corollaries}
\Crefname{corollary}{Corollary}{Corollaries}
\crefname{claim}{claim}{claims}
\Crefname{claim}{Claim}{Claims}
\crefname{definition}{definition}{definitions}
\Crefname{definition}{Definition}{Definitions}
\crefname{remark}{remark}{remarks}
\Crefname{remark}{Remark}{Remarks}

\newcommand{\R}{\mathbb R}
\newcommand{\Q}{\mathbb Q}
\newcommand{\Z}{\mathbb Z}
\newcommand{\C}{\mathbb C}

\newcommand{\Lio}{\mathscr L}

\newcommand{\ord}{\operatorname{ord}}
\newcommand{\dist}{\operatorname{dist}}
\newcommand{\diam}{\operatorname{diam}}
\newcommand{\Int}{\operatorname{int}}
\newcommand{\rank}{\operatorname{rank}}
\newcommand{\norm}[1]{\left\lVert #1\right\rVert}
\newcommand{\abs}[1]{\left\lvert #1\right\rvert}
\newcommand{\set}[1]{\left\{#1\right\}}
\newcommand{\eps}{\varepsilon}

\title[Mahler's Problem on Liouville Numbers]
{Mahler's Problem on Liouville Numbers}

\author{Diego Marques}

\thanks{Supported by CNPq, Grant No.~304467/2023-5.}

\address{Departamento de Matem\'atica, Universidade de Bras\'ilia, Bras\'ilia, 70910-900, Brazil}
\email{diego@mat.unb.br}
\subjclass[2020]{Primary 11J82; Secondary 11J81, 26E05, 30D20}
\keywords{Liouville numbers, Mahler's problem, Maillet's property,
real-analytic functions, two-height approximation, arithmetic rigidity,
irrationality exponent}
\date{}

\hypersetup{
  pdftitle={Mahler's Problem on Liouville Numbers},
  pdfauthor={Diego Marques},
  pdfsubject={Mahler's Problem on Liouville Numbers},
  pdfkeywords={Liouville numbers, Mahler's problem, Maillet's property,
real-analytic functions, two-height approximation, arithmetic rigidity,
irrationality exponent}
}

\begin{document}

\begin{abstract}
A classical theorem of Maillet states that every nonconstant rational
function with rational coefficients maps Liouville numbers to Liouville numbers.
In 1984, Mahler asked whether there exists a transcendental entire
function with the same property. We resolve this question in the
negative. More generally, we prove that every real-analytic function
on an interval with this property is the restriction of a rational
function in $\R(x)$. The local result is quantitative: for every nonrational
real-analytic function, every nonempty open subinterval of its
domain contains a Liouville number whose image has irrationality
exponent at most $100$. The proof rests on a two-height counting estimate that treats
source and target denominators independently. An adaptive
determinant argument, uniform Wronskian sublevel bounds, and
Farey separation establish the estimate; a nested-interval
construction yields the quantitative result.
\end{abstract}

\maketitle

\section{Introduction}

A real number $\xi$ is called a \emph{Liouville number} if, for every
integer $N\geq1$, there exist infinitely many pairs
$(p,q)\in\Z\times\Z_{>0}$, with $q\geq2$, such that
\begin{equation}\label{eq:intro-liouville}
0<\abs{\xi-\frac pq}<q^{-N}.
\end{equation}
We denote the set of Liouville numbers by $\Lio$. Since Liouville introduced these numbers
\cite{Liouville1844,Liouville1851}, their exceptional
rational approximation properties have made them a natural setting for
the study of arithmetic properties of functions; see, for instance,
\cite{Bugeaud2004,FischlerRivoal2024,Waldschmidt2003}.

A classical result of Maillet \cite{Maillet1906} states that every
nonconstant rational function $R\in\Q(x)$ maps $\Lio$ into itself. We say that a
function has \emph{Maillet's property} if it preserves every Liouville
number in its domain.

In one of his final papers, published in 1984, Mahler \cite{Mahler1984} posed eight problems at the intersection of transcendence theory and Diophantine approximation. The first, motivated by Maillet's theorem, asks:

\begin{quote}
\emph{Which analytic functions $f(z)$ have the property that if $\xi$ is
any Liouville number, then so is $f(\xi)$? In particular, are there entire
transcendental functions with this property?}
\end{quote}

Mahler thus asked whether Maillet's arithmetic invariance can persist for a transcendental entire function. Subsequently recorded as an open problem by Waldschmidt \cite[Question~3.27, p.~281]{Waldschmidt2004} and Bugeaud \cite[Problem~44, p.~215]{Bugeaud2004}, this question has remained open for over four decades.

We answer Mahler's question in the negative.

\begin{theorem}\label{thm:intro-mahler}
Let $F:\C\to\C$ be entire. If
\[
F(\Lio)\subseteq\Lio,
\]
then $F\in\R[z]$. In particular, no transcendental entire function has
Maillet's property.
\end{theorem}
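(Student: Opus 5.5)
We will derive the theorem from the local real-analytic statement announced in the abstract. That statement says: if $f$ is real-analytic on an open interval $I$, not the restriction of a rational function, then every nonempty open subinterval of $I$ contains a Liouville number $\xi$ with $f(\xi)\notin\Lio$ (indeed with irrationality exponent of $f(\xi)$ at most $100$). Assuming this, the plan for \Cref{thm:intro-mahler} has three steps.

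\textbf{Step 1: $F$ is real on $\R$.} Since $\Lio\subset\R$, the hypothesis gives $F(\xi)\in\R$ for every $\xi\in\Lio$. The set $\Lio$ is dense in $\R$; it is a dense $G_\delta$. Hence $\operatorname{Im}F$ vanishes on a dense subset of $\R$, so by continuity $F(\R)\subseteq\R$. Then the Taylor coefficients of $F$ at $0$ are real, i.e. $F(z)=\overline{F(\bar z)}$. Thus $f:=F|_{\R}$ is a real-analytic function on $\R$ that preserves $\Lio$.

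\textbf{Step 2: $F$ is rational.} By the contrapositive of the local result, applied on any interval (say $I=\R$), $f$ must agree with some $R\in\R(x)$ on $I$. By the identity theorem, $F=R$ on $\C$ away from the poles of $R$. Since $F$ is entire, $R$ has no poles in $\C$, so $R$ is a polynomial. Its coefficients are real because $f$ is real-valued, so $F\in\R[z]$. Constants are excluded by the hypothesis unless the constant is itself Liouville, and a constant polynomial in $\R[z]$ is allowed by the statement anyway. The final sentence of the theorem then follows immediately, since a polynomial is not transcendental.

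\textbf{The main obstacle} is entirely the local result, which the paper proves later. For that I would proceed as follows. Given a non-rational real-analytic $f$ and an interval $J$, build $\xi$ by nested intervals. At stage $k$, choose a rational $p_k/q_k$ inside the current interval, with $q_k$ enormous compared to the previous data, so that $\xi$ will satisfy $|\xi-p_k/q_k|<q_k^{-k}$. Then shrink to a small interval around $p_k/q_k$ on which $f$ stays away from all rationals $a/b$ with $b$ in a prescribed height range, in the sense $|f(x)-a/b|\gg b^{-100}$.

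The key input is a two-height counting estimate: for source heights $\le Q$ and target heights $\le H$, count the pairs $p/q$, $a/b$ with $|f(p/q)-a/b|$ small. If $f$ is not rational, this count must be sparse. The proof would go through a determinant argument with auxiliary polynomials $P(x,f(x))$ of bounded degree, whose nonvanishing comes from the non-rationality of $f$; Wronskian sublevel-set bounds then control where $P(x,f(x))$ is small; and Farey separation shows that distinct rationals are well spaced. Getting the two heights decoupled, so that the source height $q_k$ can be as huge as the Liouville condition demands while the target exponent stays at $100$, is the hard core. I would first try a Bombieri–Pila-style determinant count on the graph of $f$, restricted to short arcs.
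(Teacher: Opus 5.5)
Your proposal is correct and follows the paper's own route. The paper proves \Cref{thm:intro-mahler} by exactly this reduction: density of $\Lio$ and continuity give $F(\R)\subseteq\R$, \Cref{cor:intro-rigidity} gives $F=P/Q$ on $\R$ with coprime $P,Q\in\R[z]$, and $QF-P\equiv0$ forces $Q$ to be constant, which is your identity-theorem/no-poles step. Your outline of the local result (safe rational centers, target-linear determinants, Wronskian sublevel bounds, Farey separation) also matches the paper's strategy.

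One correction to that outline: the count is not sparse for small target heights. The paper therefore counts only target heights $b$ with $H\leq b<T(Q,A)$, since heights below $H$ have already been excluded at earlier stages, and it balances the resulting main term $C_{\mathrm{low}}Q^2H^{-98}$ against the Farey supply $c_{\mathrm F}|J|Q^2$ on the shrinking interval.
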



The requirement that \emph{every} Liouville number be preserved
is essential in Theorem~\ref{thm:intro-mahler}. Classical arithmetic
interpolation allows considerable flexibility on countable sets
\cite{Stackel1895,Stackel1902,Weierstrass1923}: transcendental entire
functions can even preserve every algebraic number field
\cite{VanderPoorten1968}; see also
\cite{BoxallJones2015,Lombardo2017,Surroca2006}.
Preservation of Liouville numbers on large sets is likewise
compatible with much greater flexibility. Alnia\c{c}{\i}k and Saias
\cite{AlniacikSaias1994} showed that a continuous, nowhere locally
constant real function maps a dense $G_\delta$ subset of $\Lio$
in its domain into $\Lio$, and transcendental entire functions
can preserve distinguished subclasses of $\Lio$
\cite{MarquesMoreira2015,MarquesSchleischitz2016}.

The regularity boundary is sharp within the classical
smooth--analytic hierarchy. The flexibility established at finite
orders of smoothness \cite{LelisMoreiraSilva2026} persists in the
$C^\infty$ category: there exist transcendental $C^\infty$
diffeomorphisms of $\mathbb R$, arbitrarily close to the identity
in the compact-open $C^\infty$ topology, such that every derivative
of every positive iterate has Maillet's property
\cite{MarquesSmooth2026}. By contrast,
Corollary~\ref{cor:intro-rigidity} shows that real analyticity
forces rational rigidity.

Theorem~\ref{thm:intro-mahler} follows from a local quantitative
result. Recall that the irrationality exponent of
$y\in\mathbb R$ is
\begin{equation}\label{eq:intro-mu}
\mu(y):=
\sup\left\{
\lambda>0:
\begin{aligned}
&0<\abs{y-a/b}<b^{-\lambda}\\
&\text{for infinitely many }(a,b)\in\Z\times\Z_{>0}
\end{aligned}
\right\}.
\end{equation}
Thus
\[
y\in\Lio\quad\Longleftrightarrow\quad\mu(y)=+\infty.
\]

\begin{theorem}\label{thm:intro-local}
There exists an absolute constant $\tau>2$ with the following property.
Let $U\subseteq\mathbb R$ be an open interval and let
$f:U\to\mathbb R$ be real-analytic. Suppose that $f$ is not the
restriction to $U$ of a rational function in $\mathbb R(x)$ without
poles on $U$. Then every nonempty open subinterval $V\subseteq U$
contains a point $\xi\in\Lio$ such that
\[
\mu(f(\xi))\leq\tau.
\]
In fact, $\xi$ may be chosen so that
\[
\left|f(\xi)-\frac{a}{b}\right|>b^{-\tau}
\]
for every $a\in\mathbb Z$ and every sufficiently large integer $b$.
One may take $\tau=100$.
\end{theorem}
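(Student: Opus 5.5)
The plan is a nested-interval construction in which, at each stage, we force one new very good rational approximation to $\xi$ while keeping $f(\xi)$ away from all rationals with denominator in a new dyadic range. The engine is a two-height counting estimate. Fix a compact subinterval $J\subseteq V$ on which $f$ is analytic with $f'$ nonvanishing; this is possible since $f$ is nonconstant, because constants are rational. For a short interval $I\subseteq J$, a source height $Q$ and a target height $B$, we want to bound the number of pairs $(p/q, a/b)$ with $q\le Q$, $b\le B$, $p/q\in I$ and
\[
\abs{f(p/q)-a/b}<b^{-\tau}.
\]
The aim is a bound of the form $\ll |I|\,Q^{2-\delta}$ whenever $B$ lies in a suitable range relative to $Q$. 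Such a bound shows that most rationals $p/q\in I$ of height about $Q$ have images badly approximable up to height $B$. Since the rationals of height $\sim Q$ are Farey-separated by $\gg Q^{-2}$, we can then also pick one whose neighbourhood of radius $Q^{-N}$ stays clear of the bad set.

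To prove the counting estimate I would argue as follows. Suppose many points $x_i=p_i/q_i$ in $I$ have $f(x_i)$ very close to $a_i/b_i$. Form a determinant from the vectors
\[
\bigl(1,x_i,\dots,x_i^{d},\,y_i,\,y_ix_i,\dots,y_ix_i^{d}\bigr),\qquad y_i=a_i/b_i .
\]
This is the Padé-type determinant that vanishes identically precisely when $f$ agrees with a rational function of degree at most $d$. Clearing denominators, a nonzero determinant is at least $\prod q_i^{-d}b_i^{-1}$. On the other hand, replacing $y_i$ by $f(x_i)$ changes it by at most $b_i^{-\tau}$, and the analytic determinant with $f(x_i)$ is small, namely at most $|I|^{\binom{2d+2}{2}}$ times a generalized Wronskian of $(x^j, x^jf)$. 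The Wronskian is not identically zero because $f\notin\R(x)$, and a uniform sublevel-set bound (Remez/Pólya type for analytic functions on $J$) keeps it from being small except on a set of small measure. These pieces together force the determinant to vanish. We would then apply this adaptively: choose $d$ and the subset of points in terms of $Q$ and $B$. Vanishing for all choices of $2d+2$ points puts all the $(x_i,y_i)$ on an algebraic curve $P(x)+yR(x)=0$ of degree at most $d$. Because $f$ is not rational, that curve meets the graph of $f$ in $O_d(1)$ points, up to closeness $b^{-\tau}$, by the sublevel bound applied to $P+fR$. This gives the count.

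For the construction itself we pick scales $Q_k$ growing super-exponentially and nested intervals $I_k$. Given $I_k$, we use the estimate together with Farey separation to find $p_k/q_k\in I_k$ with $q_k\approx Q_k$ such that every point within $q_k^{-k}$ of it has image satisfying
\[
\abs{f(\cdot)-a/b}>b^{-\tau}\quad\text{for } B_{k-1}<b\le B_k .
\]
Here $B_k$ ranges up to a power of $Q_{k+1}$, and the range below is handled by the previous step. We set $I_{k+1}$ to be a tiny interval near $p_k/q_k$ not containing it. The intersection point $\xi$ is then Liouville, and $f(\xi)$ has exponent at most $\tau$; constants are tracked so that $\tau=100$ suffices.

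The main obstacle is the counting estimate. It must hold uniformly over a range of $B$ large compared with $Q$, which is what makes the two heights independent, while keeping the degree $d$, and hence $\tau$, bounded. The delicate step is making the Wronskian sublevel bound uniform in $d$ and in $I$.
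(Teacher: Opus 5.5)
You have the paper's architecture: nested intervals, safe rational centres, the target-linear determinant compared with the arithmetic lower bound $\prod q_i^{-d}b_i^{-1}$, a sublevel bound for $A+Bf$, and Farey separation. But there is a genuine gap: the counting estimate you aim for cannot hold, and what replaces it is exactly the part hidden in ``constants are tracked''.

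\textbf{The counting estimate and the fusion.} Target denominators just above the lower cutoff $H$ (your $B_{k-1}$) exclude a proportion of the Farey points in $I$ that does not decay as $Q\to\infty$.
\begin{itemize}
\item Each $a/b$ with $b\approx H$ kills an interval of length $\asymp b^{-\tau}$, so the best available bound is $C_{\mathrm{low}}Q^2H^{2-\tau}$, not $\ll|I|Q^{2-\delta}$.
\item When $|I|\lesssim H^{-\tau}$, nothing from earlier stages stops a single such $a/b$ from lying within $b^{-\tau}$ of all of $f(I)$. Then $I$ contains no safe point at all.
\item The determinant method only helps once $b$ is a fixed positive power of $Q$.
\end{itemize}
The paper therefore splits the target range. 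For $b<Q^{1/5}$, an elementary monotonicity-plus-Farey count gives $C_{\mathrm{low}}Q^2H^{2-\tau}$, with $C_{\mathrm{low}}$ independent of the source interval. For $b\ge Q^{1/5}$, the determinant argument gives $C(J,A)Q^{17/10}$, uniformly in $H$.

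A safe centre in $J_n$ then requires $C_{\mathrm{low}}T_n^{2-\tau}\le\frac{c_{\mathrm F}}{4}|J_n|$, which is condition (F7). Since $|J_{n+1}|\approx(2Q_n)^{-A_n}$, the range treated at stage $n$ must extend to $T_{n+1}\approx Q_n^{A_n/(\tau-3)}$, so that $T_{n+1}^{2-\tau}=o(Q_n^{-A_n})$. This cutoff is a fixed power of $Q_n$ depending on $A_n$: large enough for the next stage, yet bounded so that only finitely many degrees occur. That calibration is the heart of the construction. Your sketch, with its ``power of $Q_{k+1}$'' and ``the range below is handled by the previous step'', leaves the range just above $B_{k-1}$ untreated.

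\textbf{The obstacle you name is misidentified.} Write $u=\log b/\log Q$; by the above it must run up to $A_n/97\to\infty$. Forcing the determinant to vanish needs cells of length $\rho$ with $\rho^{N(N-1)/2}\lesssim Q^{-N(d+u)}$. That means at least $Q^{2(d+u)/(2d+1)}$ cells, which is subquadratic only if $d\gtrsim u$.

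So the degree cannot stay bounded. The paper takes $d=\lceil10u\rceil$, and $\tau$ stays bounded because $d/u$ is fixed, not because $d$ is. Uniformity of the sublevel constants in $d$ or in $I$ is never needed: for fixed $A$ only $2\le d\le D(A)$ occur, and $Q$ is taken large afterwards. What is needed is that $C_{\mathrm{low}}$ is independent of $J$ and that the remainder is uniform in $H$.

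\textbf{Smaller points.}
\begin{itemize}
\item The Wronskian plays no role in making the determinant vanish; that comes from the analytic upper bound against the arithmetic lower bound.
\item The Wronskian enters only in the sublevel bound for $A+Bf$. There you need the exponent $1/(N-1)$, so that $s/(N-1)>2$ and each cell holds $O(1)$ points. A Remez-type bound on all of $J$ gives an exponent governed by the uncontrolled vanishing orders of $W_d$. The paper instead removes the finitely many zeros of $W_d$, $2\le d\le D(A_{n+1})$, when choosing $I_{n+1}$, at the cost of the factor $\Lambda_{n+1}$ in (Q4).
\item To pass from a safe centre to a whole neighbourhood, the margin $4M_*Q^{-A}$ must be built into the resonance condition being counted, as in the definition of $\mathcal D(J;Q,A,H)$.
\end{itemize}
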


The particular choice $\tau=100$ is not intrinsic and has not been
optimized. The substantive point is the existence of an absolute
finite bound, independent of the function. Taking the contrapositive of
Theorem~\ref{thm:intro-local} gives the corresponding local rigidity
statement.

\begin{corollary}\label{cor:intro-rigidity}
Let $f:U\to\R$ be real-analytic on an open interval. If
\[
f(\Lio\cap U)\subseteq\Lio,
\]
then there exists a rational function $R\in\R(x)$, with no pole in $U$,
such that
\[
f(x)=R(x)\qquad(x\in U).
\]
\end{corollary}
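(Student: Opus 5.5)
The corollary is the contrapositive of Theorem~\ref{thm:intro-local}, so the plan is short: we check that the hypotheses match and that the conclusion of the theorem contradicts the assumption $f(\Lio\cap U)\subseteq\Lio$.

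Assume $f:U\to\R$ is real-analytic and $f(\Lio\cap U)\subseteq\Lio$. Suppose, for contradiction, that $f$ is not the restriction to $U$ of a rational function in $\R(x)$ without poles on $U$. Then Theorem~\ref{thm:intro-local} applies with $V=U$, which is a nonempty open subinterval of itself. It produces a point $\xi\in\Lio\cap U$ with $\mu(f(\xi))\le\tau=100<+\infty$.

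To conclude, we only need the characterization $y\in\Lio\iff\mu(y)=+\infty$ recorded after \eqref{eq:intro-mu}. It gives $f(\xi)\notin\Lio$, which contradicts $f(\xi)\in f(\Lio\cap U)\subseteq\Lio$.

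This characterization deserves a brief check, since the definition \eqref{eq:intro-liouville} requires $q\ge 2$ while \eqref{eq:intro-mu} allows all $b\ge1$. We bypass the equivalence and argue from the stronger form of the theorem. It says $|f(\xi)-a/b|>b^{-\tau}$ for all $a$ and all $b\ge b_0$. Suppose $f(\xi)$ were Liouville. Taking $N=\lceil\tau\rceil$ in \eqref{eq:intro-liouville} gives infinitely many pairs $(p,q)$ with $0<|f(\xi)-p/q|<q^{-N}\le q^{-\tau}$. Only finitely many of these pairs can have $q<b_0$, because for each fixed $q$ the inequality forces $p$ into a bounded set. Hence some pair has $q\ge b_0$, which contradicts the theorem's bound.

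Therefore $f$ coincides on $U$ with some $R\in\R(x)$ having no pole in $U$. There is no real obstacle here: all the substance lies in Theorem~\ref{thm:intro-local}.
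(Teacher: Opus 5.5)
Your proposal is correct and follows the same route as the paper: the corollary is obtained as the contrapositive of Theorem~\ref{thm:intro-local} (applied with $V=U$), giving $\xi\in\Lio\cap U$ with $f(\xi)\notin\Lio$. Your extra check via the eventual bound $\abs{f(\xi)-a/b}>b^{-\tau}$, which accounts for the $q\ge2$ versus $b\ge1$ conventions, is valid. It is a harmless refinement of the paper's one-line appeal to $\mu(f(\xi))\le100$.
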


The rational-rigidity conclusion raises a natural arithmetic question:
must the resulting rational function be defined over $\Q$?  The answer
displays a sharp distinction between degree one and higher degree.

\begin{theorem}\label{thm:intro-mobius}
Let
\[
M(x)=\frac{ax+b}{cx+d}\in\R(x),
\qquad ad-bc\neq0.
\]
Then $M$ has Maillet's property if and only if $M\in\Q(x)$. Moreover, if $M\notin\Q(x)$, then every nonempty open interval
contained in the domain of $M$ contains a point $\xi\in\Lio$ such that
$M(\xi)\notin\Lio$.
\end{theorem}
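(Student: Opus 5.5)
The plan has three parts: the easy direction, a reduction to a normal form, and a nested-interval construction for the hard direction.

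\textbf{Easy direction.} If $M\in\Q(x)$ is nonconstant, then Maillet's theorem gives $M(\Lio)\subseteq\Lio$ directly.

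\textbf{Reduction for the converse.} Suppose $M\notin\Q(x)$. I will build, inside any nonempty open interval $V$ of the domain, a Liouville number $\xi$ with $M(\xi)\notin\Lio$. I normalize $M$ so that one coefficient equals $1$. Then the $\Q$-vector space spanned by $a,b,c,d$ has dimension at least $2$, since otherwise all the ratios would be rational. Composing with Möbius maps in $\mathrm{PGL}_2(\Q)$ on either side changes neither membership in $\Lio$ (by Maillet) nor the property of being finitely approximable. This lets me reduce to simple normal forms where possible. If $c=0$, I get $M(x)=\alpha x+\beta$ with $(\alpha,\beta)\notin\Q^2$. If $c\neq0$, I write $M(x)=\alpha+\beta/(x+\gamma)$ and compose with translations and inversion. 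The reduction does not always land in a single rational normal form, so I will work with $M$ directly and use only that it is an irrational Möbius map.

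\textbf{Construction.} I choose rationals $p_k/q_k$ inductively with
\[
|\xi-p_k/q_k|<q_k^{-k},
\]
so that $\xi=\lim p_k/q_k$ is Liouville. At the same time I keep $M(\xi)$ badly approximable at a fixed exponent $\tau$, say $\tau=4$: for every $a/b$ with $b$ large, $|M(\xi)-a/b|>b^{-\tau}$. The key observation is that $M(p/q)$ is one of the points $(ap+bq)/(cp+dq)$. When $M\notin\Q(x)$ these values are not rational in a uniform way.

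At stage $k$ we have a closed interval $I_k$ on which the target condition holds for all denominators $b\le B_k$. For the next stage I pick a rational $p/q$ in the interior of $I_k$ with $q$ huge, such that $M(p/q)$ is far from every $a/b$ with $B_k<b\le B_{k+1}$, where $B_{k+1}$ is chosen so that $B_{k+1}^{-\tau}$ is much larger than $q^{-k}$. I then set $I_{k+1}$ to be a tiny interval around $p/q$ of length below $q^{-k}$. On $I_{k+1}$ the image $M(I_{k+1})$ has length $\ll q^{-k}$, which is negligible compared with the protected gaps. Denominators $b>B_{k+1}$ are handled at later stages.

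\textbf{Counting step.} To choose $p/q$, note that $M$ is bi-Lipschitz on $I_k$. The set of $x\in I_k$ with $|M(x)-a/b|\le 2b^{-\tau}$ for some $b\in(B_k,B_{k+1}]$ therefore has measure
\[
\ll \sum_{b>B_k} b\cdot|I_k|\cdot b^{-\tau}\ll |I_k|\,B_k^{2-\tau},
\]
which is a small fraction of $I_k$ once $B_k$ is large. So most of $I_k$ is good, and rationals $p/q$ with $q$ large are dense there.

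\textbf{Main obstacle.} The counting shows good points exist, but I must also ensure that $M(p/q)$ itself is not one of the forbidden $a/b$ with $b\le B_{k+1}$. This is exactly where irrationality of $M$ is needed. If $M(p/q)=a/b$ with $b$ small for many $p/q$, then by rigidity of the linear relation $b(ap+bq)=a(cp+dq)$ over $\Q$, $M$ would map infinitely many rationals to rationals. That forces $M\in\Q(x)$ after normalization, since a Möbius map is determined by three points. Quantitatively, irrationality of $M$ means at most two rationals $p/q$ map into $\Q$, so I can avoid them. The remaining difficulty is the ordering of parameters: $q$ must be chosen after $B_{k+1}$, while $B_{k+1}$ must depend on $q^{-k}$. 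I resolve this by fixing $B_{k+1}:=q^{k/(2\tau)}$. Then the good set from the counting step must contain a rational of denominator exactly in the right range. I expect to prove this by a Farey-density argument: rationals with $q\le Q$ are $Q^{-2}$-dense, which suffices provided $B_k^{2-\tau}$ is small enough.
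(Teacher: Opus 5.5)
Your easy direction is fine. The converse has a genuine gap. You aim to produce, for every $M\notin\Q(x)$, a Liouville $\xi$ with $\abs{M(\xi)-a/b}>b^{-\tau}$ for all $a\in\Z$ and all large $b$, with $\tau=4$ fixed. Such a $\xi$ need not exist.

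Take $M(x)=\sigma x$ with $\sigma$ a strong Liouville number. By Petruska's theorem in the form \eqref{eq:Petruska-product}, $\sigma\xi\in\Q\cup\Lio$ for every $\xi\in\Lio$. Your target condition instead forces $\sigma\xi$ to be irrational with $\mu(\sigma\xi)\le\tau$, and the same contradiction arises for every fixed $\tau$.

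The step that breaks is the passage from ``the bad set has small measure'' to ``it misses some $p/q$ with the prescribed denominator.'' For $b$ near $B_{k+1}=q^{k/(2\tau)}$ the bad windows have length $\asymp q^{-k/2}$, far below the spacing $\asymp q^{-2}$ of the source fractions. There can be $\asymp B_{k+1}^2\abs{I_k}$ of them, more than the $\asymp q^2\abs{I_k}$ candidates once $k>2\tau$. The bound $4Q^2\abs{E}+R$ of Lemma~\ref{lem:rational-union} is then vacuous, and $Q^{-2}$-density says nothing about where the windows sit.

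In fact they sit on near-coincidences. If $g/h$ and $g'/h'$ are consecutive convergents of $\sigma$, then $\abs{\sigma p/q-gp/(hq)}<\abs{p/q}/(hh')$ for \emph{every} $p/q$. Take $h$ to be the last convergent denominator $\le q$ or the first one $>q$, and use $\log h'/\log h\to\infty$. One checks that for $k\ge2\tau(2\tau+1)$ and $q$ large, every $p/q\in I_k$ lies within $(hq)^{-\tau}$ of $gp/(hq)$ with $B_k<hq\le q^{k/(2\tau)}$. So no safe center exists at all. Your remark that at most two rationals map to rationals controls only exact equalities $M(p/q)=a/b$, not these near-equalities.

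What is missing is the dichotomy of Lemma~\ref{lem:mobius-dichotomy} for the normalized coefficient vector $\boldsymbol\theta$, together with a quantitative form of your three-point idea.

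\emph{Very well approximable case.} If $\boldsymbol\theta$ admits simultaneous rational approximations of every order, the paper abandons finite exponents and uses a rational fiber. It picks $\rho\in\Q\cap M(V)$ with $\xi=M^{-1}(\rho)$ irrational; this is where exact three-point rigidity is used. The rational approximants $M_n$ of $M$ give $M_n^{-1}(\rho)\in\Q$ with denominator $\ll h_n$ and $\abs{\xi-M_n^{-1}(\rho)}\ll h_n^{-n}$. Hence $\xi\in\Lio$ while $M(\xi)=\rho\notin\Lio$.

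\emph{Diophantine case.} If instead $\norm{\boldsymbol\theta-\mathbf r/h}_\infty\ge\eta_0h^{-\nu}$, take three resonant centers with target denominators in a block $[B,2B)$, $B=Q^u\ge Q^{1/2}$. By Lemma~\ref{lem:mobius-reconstruction} they reconstruct a rational M\"obius map whose normalized coefficients lie within $\ll Q^6\delta$ of those of $M$, where $\delta$ is the resonance tolerance, and have a common denominator $\ll Q^3B^3$. This contradicts the barrier once $\tau_1\ge9\nu+18$. So each large dyadic target block contains at most two resonant centers, and only blocks with $B<Q^{1/2}$ are treated by a measure-plus-Farey count like yours. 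The exponent must therefore depend on $\nu$.

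Your parameter coupling also needs repair. With $\abs{I_{k+1}}<q^{-k}$ and $B_{k+1}^{-\tau}=q^{-k/2}$, consider the single fraction per dyadic block that the Farey count allows regardless of interval length; your measure bound drops this term. Its window can contain all of $M(I_{k+1})$, leaving nothing to choose from at the next stage. The paper avoids this in two ways. It builds an additive margin $4M_*Q^{-A}$ into the resonance condition at the center. It also runs the target range up to $T_{n+1}=\lfloor Q_n^{A_n/(\tau_1-3)}\rfloor$, so that $C_{\mathrm{mob}}T_{n+1}^{2-\tau_1}\le\frac{c_{\mathrm F}}4\abs{J_{n+1}}$.
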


This rigidity already fails for monomials of every higher degree.

\begin{theorem}\label{thm:intro-nonlinear}
There exist continuum many positive transcendental real numbers $\sigma$
with the following property: for every integer $m\geq2$ and every
$\xi\in\Lio$,
\[
\sigma\xi^m\in\Lio.
\]
\end{theorem}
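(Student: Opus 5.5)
We want continuum many transcendental $\sigma>0$ such that $\sigma\xi^m\in\Lio$ for all $m\ge2$ and all $\xi\in\Lio$. The plan is to take $\sigma$ itself to be an extremely well-approximable number whose approximants are perfect powers: $\sigma=\sum_k \eps_k\, c_k^{-1}$-type series arranged so that $\sigma$ is approximated by rationals of the shape $u_k/v_k$ with $v_k$ an $m$-th power for every $m$ simultaneously (e.g.\ $v_k=(n_k!)^{n_k!}$-style denominators, which are $m$-th powers for all $m\le n_k$), and with approximation error beating any power of $v_k$. Concretely, I would take $\sigma=\sum_{k\ge1}\delta_k\,2^{-e_k}$ with $\delta_k\in\{1,2\}$ (giving continuum many distinct values) and $e_k$ a rapidly growing sequence of integers divisible by $k!$, growing so fast that $e_{k+1}\ge k\,e_k\cdot(\text{anything needed})$; transcendence then follows from Liouville's theorem since $\sigma\in\Lio$.

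The key step is the following: given $\xi\in\Lio$ and $m\ge2$, and a target exponent $N$, find $p/q$ with $q\ge2$ and $0<|\xi-p/q|<q^{-M}$ for a huge $M$, and then approximate $\sigma\xi^m$ by $(u/v)(p/q)^m$. The difficulty is that $\xi$'s approximation scale $q$ and $\sigma$'s scale $v=2^{e_k}$ are unrelated, so the product denominator $vq^m$ has error roughly $\max(q^{-M},\,v^{-e_{k+1}/e_k})$ times constants. The trick for degree $m\ge2$ is to exploit that $v$ is an $m$-th power: write $v=w^m$ with $w=2^{e_k/m}$, so $\sigma\approx u/w^m$ and $\sigma\xi^m\approx u\,p^m/(wq)^m$; but this alone does not help with the mismatch. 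Instead I would choose, for given $q$, the index $k$ so that $v=2^{e_k}$ is comparable to (a bounded power of) $q$ from below, i.e.\ $e_k\le C\log_2 q< e_{k+1}$ — this is impossible to guarantee for arbitrary $q$ since the $e_k$ are sparse. So the real plan must be to make the set of good denominators of $\sigma$ dense on a logarithmic-of-logarithmic scale: replace the single series by one where for every scale $Q$ there is an approximant $u/v$ of $\sigma$ with $v\le Q^{C}$ and error $\le Q^{-M}$ for arbitrarily large $M$ relative to $C$. This cannot hold for a single real number at all scales (it contradicts Dirichlet/Roth-type gaps), so the $m\ge2$ structure must enter: the product $\sigma\xi^m$ only needs to be Liouville, so it suffices, for infinitely many of $\xi$'s approximants $p/q$, to find such a $\sigma$-approximant; one uses that $\xi^m$ has approximants whose denominators can be modified: $(p/q)^m = (p r)^m/(qr)^m$ and error scaling lets us inflate $q$ to $qr$ with $r$ free, matching $(qr)^m$ against the sparse $2^{e_k}$ lengths. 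The main obstacle is precisely this matching: choosing $r$ so that $(qr)^m$ divides or is close to $v_k$ while keeping $|\xi-p/q|$ much smaller than $(qr)^{-N}$, which requires $\xi$'s approximation quality $M$ to dominate the growth ratio $e_{k+1}/e_k$; since $\xi$ is Liouville, $M$ can be taken arbitrarily large, but $e_{k+1}/e_k$ is fixed by $\sigma$, so I would design $\sigma$ with approximants at \emph{every} denominator $2^{j}$ for $j$ in unions of long blocks, using the $m\ge2$ flexibility to handle the gaps, and verify via a Cantor-type construction (choosing digits blockwise) that continuum many such $\sigma$ exist.
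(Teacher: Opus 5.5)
There is a genuine gap. It sits mainly in the second point below, where the real role of $m\ge2$ has been missed.

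\textbf{The approximation step is never completed, and the impossibility conclusion is a misstep.} Your proposal stops at the obstacle it names, so it never shows that $\sigma\xi^m$ is well approximable. The step where you conclude that the needed density of good denominators ``cannot hold for a single real number'' is where it goes wrong. You do not need good approximants of $\sigma$ at every scale. You only need its bad scales to lie in windows of bounded logarithmic ratio, and your series already has this once $e_{k+1}/e_k\to\infty$.

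Indeed, $\sigma$ has $a/b=u_k/2^{e_k}$ with $b\le X$ and $|\sigma-a/b|\le 4X^{-N}$ whenever $e_k\le\log_2X\le e_{k+1}/N$. So the possible bad scales lie in windows $e_{k+1}/N<\log_2X<e_{k+1}$, each of ratio $N$.

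On the other side, $|\xi-p/q|<q^{-M}$ approximates $\xi^m$ by $p^m/q^m$ with denominator $q^m\le X^m$ and error $\ll q^{-M}\le X^{-N}$, for every $X\in[q,q^{M/N}]$. This window has ratio $M/N$. If $M>N^2$ it cannot sit inside a bad window, so the two meet. At a common scale the product approximant has denominator at most $X^{m+1}$ and error $\ll X^{-N}$.

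None of this involves matching denominators, $m$-th powers, or $m\ge2$. It yields only the dichotomy $\sigma\xi^m\in\Q\cup\Lio$. The paper obtains the same dichotomy from Petruska's theorem for a strong Liouville $\sigma$, together with Maillet's theorem applied to $\xi^m$.

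\textbf{The rational alternative is never excluded.} This is the genuinely missing idea, and it is the only place where $m\ge2$ matters. Your sketch never uses $m\ge2$ essentially, so if completed it would also prove the case $m=1$. That case is false: $\xi=1/\sigma$ is Liouville and $\sigma\xi=1$.

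What must be shown is that, for every $m\ge2$ and every $r\in\Q^\times$, no root of $X^m-r/\sigma$ is a Liouville number. Nothing in your construction controls this. Forcing $k!\mid e_k$ actually points the wrong way. It removes the $2$-adic obstruction (namely $v_2$ of $r2^{e_k}/u_k$ not divisible by the primes dividing $m$) that could have forced irreducibility. It also leaves $(r/\sigma)^{1/m}$ close to $2^{e_k/m}(r/u_k)^{1/m}$, which is rational whenever $r/u_k$ happens to be an $m$-th power.

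The paper handles this as follows:
\begin{itemize}
\item It builds $\sigma$ by continued fractions with $q_n\equiv\ell_n\pmod{\ell_n^2}$, where $\ell_n$ is a prime with $\ell_n\nmid q_{n-1}\prod_{j\le n}A_jB_j$.
\item This makes $B_jp_nX^m-A_jq_n$ Eisenstein at $\ell_n$.
\item Hence the algebraic approximants of each root have degree exactly $m$ and height $\frac1m\log q_n+O(1)$.
\item The growth condition $q_n>q_{n-1}^{n+1}$ then verifies the hypotheses of the $U_m$ gap principle.
\item So every root lies in $U_m$, which is disjoint from $U_1=\Lio$.
\end{itemize}
Some such degree control combined with a gap principle is indispensable, and your proposal contains no substitute for it.
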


Thus coefficient rigidity holds for rational maps of degree one but
fails in every degree at least two. In particular, the conclusion
$F\in\R[z]$ in Theorem~\ref{thm:intro-mahler} cannot be strengthened
to $F\in\overline{\Q}[z]$. We do not attempt here to classify all rational functions in $\R(x)$ with Maillet's property.

The passage to the entire case is immediate. An entire function
preserving $\Lio$ is real-valued on $\mathbb R$ by density and
continuity. Corollary~\ref{cor:intro-rigidity} makes its restriction
rational; analytic continuation and the absence of poles then
force it to be a polynomial.\\

\noindent
\textit{The two-height mechanism.}
The local theorem requires arbitrarily good rational approximation
in the source and exclusion of very good rational approximation
in the image at every sufficiently large denominator. We construct
such a point by counting source fractions $p/q$, with $q\asymp Q$,
whose images are close to target fractions $a/b$, keeping the two
denominator scales separate.

Small target denominators are handled by elementary counting.
For larger denominators, write $b\asymp Q^u$ and adapt the source
degree to $u$, while keeping the target degree equal to one.
A determinant comparison places the rational pairs in each short
source cell on a common relation
\[
A(x)+B(x)y=0.
\]
Such a relation cannot hold identically on the graph of a
nonrational $f$; this is why the argument also applies to
nonrational algebraic functions. After localization away from the
relevant Wronskian zeros, sublevel estimates uniform in the
normalized coefficients control both the length and the number
of components of the set where $A+Bf$ is small. Farey separation
then gives a bounded number of source fractions per cell, while
the choice of degree keeps the number of cells subquadratic
in $Q$.

The resulting estimate has a quadratic term controlled by the
lower target cutoff and a subquadratic remainder uniform in that
cutoff. The constant multiplying the quadratic term is independent
of the source subinterval. These uniformities allow safe rational
centers to be chosen repeatedly on shrinking intervals.

We then choose nested intervals near these centers. The source
approximation orders tend to infinity, and target avoidance is
imposed with a margin that persists throughout each interval.
Consecutive target-denominator ranges cover all sufficiently
large denominators. The limiting point is therefore Liouville,
while its image has irrationality exponent at most $100$.
Liouville numbers enter only in this final construction; the
counting estimate concerns rational approximation to the graph
itself.

The determinant argument lies in the tradition of Bombieri--Pila and
Pila \cite{BombieriPila1989,Pila1991,Pila2005}; related developments
include \cite{BeresnevichDickinsonVelani2007,ComteYomdin2018,
Habegger2018,Huang2015,Huxley1994,KleinbockMargulis1998,
PilaWilkie2006,VaughanVelani2006}. Separate coordinate denominators and Wronskian arguments already
occur in Pila's work. 

The results of Section~\ref{sec:arithmetic-boundary} are independent of the analytic counting
argument. The degree-one classification uses a Diophantine
dichotomy for the coefficient vector and rational reconstruction
from three points. The higher-degree examples combine a
continued-fraction construction, Petruska's theorem on strong
Liouville numbers, and a gap principle for $U_m$-numbers.

A formalization of Theorems~\ref{thm:intro-mahler}
and~\ref{thm:intro-local} in the \texttt{Lean~4} proof assistant
is available at \url{https://github.com/DiegoMarquesMath/MahlerLean}.

\medskip
\noindent\emph{Acknowledgments.}
The author is grateful to Yuri Bilu, Carlos Gustavo Moreira,
Johannes Schleischitz, and Jean Lelis for many valuable discussions
related to this problem over the years. The author is especially
grateful to Michel Waldschmidt, who first introduced him to the
problem at the 2008 Arizona Winter School. He also thanks Yann
Bugeaud for his monograph \emph{Approximation by Algebraic Numbers},
which later brought the problem again to his attention.

\section{Rational counting preliminaries}
\label{sec:rational-counting}

\noindent
\textbf{Notation and conventions.} For a finite union of intervals $E\subseteq\R$, we denote its Lebesgue
measure by $|E|$. We write $J\Subset U$ if $\overline J$ is a compact
subset of $U$.

The symbols $O(\cdot)$, $\ll$, and $\asymp$ have their usual meanings.
Unless otherwise stated, implied constants may depend on quantities
fixed before the estimate, but not on parameters subsequently allowed
to vary or tend to infinity. Additional dependence is indicated by
subscripts; thus $X\ll_{A,J}Y$ means that
$X\leq C(A,J)Y$ for some $C(A,J)>0$. Generic constants
$C,C_1,C_2,\ldots$ may vary from one occurrence to the next.

Subscripts and superscripts on named constants are mnemonic labels,
not indicators of parameter dependence; for example, `sub' and `low'
refer to sublevel and low-target-height estimates, respectively.

With these conventions in place, we record the two rational-counting
estimates needed in Section~\ref{sec:scarcity}. They provide complementary
upper and lower bounds for reduced rational points of comparable height.
The first is based on the separation of reduced fractions; the absence
of a factor $Q$ in the component term will be essential in Section~\ref{sec:scarcity}.

\begin{lemma}[Reduced fractions in a finite union of intervals]
\label{lem:rational-union}
Let $E\subseteq\R$ be a union of at most $R$ intervals, where
$R\in\Z_{>0}$, and let $Q\in\Z_{>0}$. Reduced fractions are counted as rational
numbers, so their positive-denominator representations are unique. Then
\begin{equation}\label{eq:rational-union}
\#\set{\frac pq\in E:\gcd(p,q)=1,\ Q\leq q<2Q}
\leq4Q^2|E|+R.
\end{equation}
\end{lemma}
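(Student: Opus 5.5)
The plan is to prove the bound one interval at a time and then sum. Write $E=I_1\cup\cdots\cup I_{R'}$ with $R'\leq R$ intervals. Open, closed and half-open intervals are handled alike, and degenerate or empty ones only help. Since the count for $E$ is at most the sum of the counts for the $I_j$, and $\sum_j|I_j|$ may exceed $|E|$ when the intervals overlap, I would first replace the family by the connected components of $E$. This gives at most $R$ pairwise disjoint intervals whose lengths sum to exactly $|E|$. It then suffices to show that a single interval $I$ contains at most $4Q^2|I|+1$ reduced fractions $p/q$ with $Q\leq q<2Q$.

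The key tool is Farey separation. If $p/q\neq p'/q'$ are reduced with $q,q'<2Q$, then
\[
\abs{\frac pq-\frac{p'}{q'}}=\frac{\abs{pq'-p'q}}{qq'}\geq\frac1{qq'}>\frac1{4Q^2}.
\]
So any two distinct fractions in the set are more than $1/(4Q^2)$ apart. Now suppose $I$ contains $n\geq1$ of them, and order them as $x_1<\cdots<x_n$. All lie in $I$, so
\[
|I|\geq x_n-x_1>(n-1)\frac1{4Q^2},
\]
which gives $n<4Q^2|I|+1$. If $n=0$ the bound is trivial. Summing over the at most $R$ components then yields \eqref{eq:rational-union}.

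There is no real obstacle here. The only points needing care are, first, reducing to disjoint components so that the lengths add up to $|E|$ and not to something larger, and second, noting that reducedness is what makes distinct fractions distinct as numbers. That second point is what guarantees $pq'-p'q\neq0$. The additive term is $R$ rather than something of size $Q\cdot R$, because each interval costs only one extra point beyond its length contribution.
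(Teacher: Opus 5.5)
Your proof is correct and matches the paper's argument: pass to the at most $R$ connected components of $E$, use Farey separation $\abs{p/q-p'/q'}>1/(4Q^2)$ to get at most $4Q^2\ell+1$ fractions in an interval of length $\ell$, and sum. One small correction: $pq'-p'q\neq0$ holds simply because $p/q\neq p'/q'$ as numbers, so reducedness is not what guarantees it; reducedness matters only because the condition $Q\leq q<2Q$ refers to the reduced denominator.
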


\begin{proof}
Write $E$ as the disjoint union of its nonempty connected components
$I_1,\ldots,I_s$, where $s\leq R$. If $p/q\neq p'/q'$ are reduced
fractions with positive denominators $q,q'<2Q$, then
\[
\abs{\frac pq-\frac{p'}{q'}}
=
\frac{|pq'-p'q|}{qq'}
\geq\frac1{qq'}>\frac1{4Q^2}.
\]
Consequently, an interval of length $\ell$ contains at most
$4Q^2\ell+1$ such fractions. Applying this to each $I_\nu$ and
summing proves \eqref{eq:rational-union}.
\end{proof}

The complementary estimate supplies a positive proportion of reduced
source fractions in every fixed interval.

\begin{lemma}[Reduced fractions in a denominator block]
\label{lem:farey-lower-en}
There exists an absolute constant $c_{\mathrm F}>0$ with the following
property. For every nondegenerate compact interval $J\subseteq\R$,
there exists $Q_0(J)\in\Z_{>0}$ such that
\begin{equation}\label{eq:farey-lower-en}
\#\set{\frac pq\in J:\gcd(p,q)=1,\ Q\leq q<2Q}
\geq c_{\mathrm F}|J|Q^2
\end{equation}
for every integer $Q\geq Q_0(J)$. One may take
$c_{\mathrm F}=1/4$.
\end{lemma}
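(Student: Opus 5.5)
The plan is to count all pairs $(p,q)$ with $p/q\in J$ and $Q\le q<2Q$, and then remove the non-reduced ones by an elementary sieve over the common divisor $d=\gcd(p,q)$. I will use a crude sieve rather than the asymptotic for $\sum\varphi(q)$, which gives a self-contained argument with explicit constants. Reduced fractions are in bijection with coprime pairs $(p,q)$, $q>0$. Each such pair is counted once, at its own denominator, so it suffices to bound the number of coprime pairs from below.

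\emph{Step 1 (all pairs).} For fixed $q$, the integers $p$ with $p/q\in J$ are exactly the integers in the interval $qJ$, which has length $q|J|$. Hence their number lies between $q|J|-1$ and $q|J|+1$. Summing over $Q\le q<2Q$ and using $\sum_{Q\le q<2Q}q=\tfrac32Q^2+O(Q)$ gives
\[
T:=\#\{(p,q):p/q\in J,\ Q\le q<2Q\}\ \ge\ \tfrac32|J|Q^2-O_J(Q).
\]

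\emph{Step 2 (non-coprime pairs).} If $\gcd(p,q)=d\ge2$, write $p=dp'$ and $q=dq'$ with $p'/q'=p/q\in J$ and $Q/d\le q'<2Q/d$; necessarily $d<2Q$. For each such $d$, the upper bound from Step 1 gives at most
\[
\sum_{Q/d\le q'<2Q/d}(q'|J|+1)\ \le\ \tfrac32|J|\frac{Q^2}{d^2}+O_J\!\left(\frac Qd+1\right)
\]
such pairs. Summing over $2\le d<2Q$, the number of non-coprime pairs is at most
\[
\tfrac32(\zeta(2)-1)|J|Q^2+O_J(Q\log Q).
\]

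\emph{Step 3 (conclusion).} Subtracting, the number of reduced fractions is at least
\[
\tfrac32\bigl(2-\zeta(2)\bigr)|J|Q^2-O_J(Q\log Q).
\]
Here $\tfrac32(2-\pi^2/6)\approx0.53>\tfrac14$. The error term is at most $0.28|J|Q^2$ once $Q\ge Q_0(J)$, which yields \eqref{eq:farey-lower-en} with $c_{\mathrm F}=1/4$.

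The only step that needs care is the bookkeeping of the $O(1)$ boundary terms in Step 2. For large $d$ the range of $q'$ is short, so the main term $|J|Q^2/d^2$ no longer dominates the $+1$ per-denominator error. These errors must be summed honestly over all $d<2Q$; this produces the $Q\log Q$ term, which is still $o(Q^2)$ and is absorbed by the choice of $Q_0(J)$. Everything else is routine.
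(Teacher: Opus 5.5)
Your argument is correct, but it takes a different, more elementary route than the paper.

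The paper applies M\"obius inversion denominator by denominator, obtaining $N_q(J)=|J|\varphi(q)+O(\tau_0(q))$. It then quotes the classical asymptotics $\sum_{n\le X}\varphi(n)=\frac{3}{\pi^2}X^2+O(X\log X)$ and $\sum_{n\le X}\tau_0(n)=O(X\log X)$, which give the exact main term $\frac{9}{\pi^2}|J|Q^2\approx0.91\,|J|Q^2$.

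Your sieve amounts to replacing $\mu(d)$ by $-1$ for every $d\ge2$ in that inversion. At level $d$ you count all pairs $(p',q')$, not only coprime ones, which is a union bound over common divisors. This turns the identity into a one-sided inequality and lowers the constant to $\frac32(2-\zeta(2))\approx0.53$. That loss is harmless, since only $c_{\mathrm F}=\frac14$ is used later.

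Your route buys two things. First, it needs only $\sum_{d\ge2}d^{-2}<\frac56$, not the value of $\zeta(2)$; for instance $\frac14+\frac19+\int_3^\infty x^{-2}\,dx<\frac56$ suffices. Second, it avoids the totient asymptotic entirely, so the proof is self-contained and $Q_0(J)$ is easy to make explicit. The paper's route buys the sharp density $9/\pi^2$, at the cost of citing standard analytic number theory.

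Your handling of the boundary terms is right. The $O_J(Q/d+1)$ contribution at each level $d$, summed over $2\le d<2Q$, gives $O_J(Q\log Q)=o(|J|Q^2)$. This is the same size of error as in the paper's proof.
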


\begin{proof}
Write $J=[\alpha,\beta]$ and, for $q\ge1$, let
\[
N_q(J):=\#\{p\in\Z:\alpha q\le p\le\beta q,\ \gcd(p,q)=1\}.
\]
By M\"obius inversion,
\[
N_q(J)=|J|\varphi(q)+O(\tau_0(q)),
\]
where $\tau_0(q)$ denotes the number of positive
divisors of $q$, and the implied constant is absolute. Hence, using the classical estimates \cite{Apostol1976},
\[
\sum_{n\le X}\varphi(n)=\frac{3}{\pi^2}X^2+O(X\log X),
\qquad
\sum_{n\le X}\tau_0(n)=O(X\log X),
\]
we obtain
\[
\sum_{Q\le q<2Q}N_q(J)
=
\frac{9}{\pi^2}|J|Q^2
+O\bigl((1+|J|)Q\log Q\bigr).
\]
Since $J$ is fixed and nondegenerate, the error term is
$o(|J|Q^2)$. As $9/\pi^2>1/4$, the result follows for all
sufficiently large $Q$.
\end{proof}

\section{Wronskian localization and uniform sublevel bounds}
\label{sec:wronskians}

In Section~\ref{sec:scarcity}, the determinant argument produces auxiliary
polynomials $A(X)+B(X)Y$ whose coefficients depend on the
rational points being counted. We therefore need sublevel
estimates for $A(x)+B(x)f(x)$ that are uniform over all normalized
coefficient vectors. Nonrationality of $f$ provides the required
Wronskian nondegeneracy after localization.

Let $\phi_1,\ldots,\phi_N$ be functions of class $C^{N-1}$ on an
interval. Their \textit{Wronskian} is
\begin{equation}\label{eq:wronskian-definition-en}
W(\phi_1,\ldots,\phi_N)(x)
:=
\det\bigl(\phi_j^{(k-1)}(x)\bigr)_{1\leq k,j\leq N}.
\end{equation}

\subsection{Rationality and linear independence}

We first recall the classical Wronskian criterion for analytic
functions. The analyticity assumption is essential here: the
corresponding statement fails, in general, for smooth functions.

\begin{lemma}[Analytic independence and Wronskians]
\label{lem:analytic-wronskian-en}
Let $U\subseteq\R$ be a connected open interval. If
$\phi_1,\ldots,\phi_N$ are real-analytic and linearly independent on
$U$, then
\[
W(\phi_1,\ldots,\phi_N)\not\equiv0
\qquad\text{on }U.
\]
\end{lemma}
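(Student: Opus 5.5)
We argue by contrapositive and by induction on $N$: if $W(\phi_1,\ldots,\phi_N)\equiv0$ on $U$, we produce a nontrivial linear relation among the $\phi_j$ on all of $U$. The case $N=1$ is immediate, since $W(\phi_1)=\phi_1\equiv0$ is itself a nontrivial relation. For the inductive step, assume $W(\phi_1,\ldots,\phi_N)\equiv0$. If $W(\phi_1,\ldots,\phi_{N-1})\equiv0$ on $U$, the induction hypothesis already gives a dependence among $\phi_1,\ldots,\phi_{N-1}$, hence among all $\phi_j$. So we may assume $W_{N-1}:=W(\phi_1,\ldots,\phi_{N-1})\not\equiv0$. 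Being real-analytic on the connected interval $U$, it does not vanish identically on any open subinterval, so there is a nonempty open interval $I\subseteq U$ on which $W_{N-1}\neq0$.

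On $I$ we use the linear ODE. Consider the differential operator
\[
L[y]:=\frac{W(\phi_1,\ldots,\phi_{N-1},y)}{W_{N-1}}
=y^{(N-1)}+a_{N-2}(x)y^{(N-2)}+\cdots+a_0(x)y,
\]
whose coefficients are real-analytic on $I$; expanding the determinant along its last column shows the leading coefficient is $1$. By construction $L[\phi_j]=0$ for $j\leq N-1$, and the hypothesis gives $L[\phi_N]=0$ on $I$. The solution space of this monic order-$(N-1)$ linear ODE on $I$ has dimension at most $N-1$, since a solution is determined by its initial data at one point, by the standard existence--uniqueness theorem. The $N$ functions $\phi_1|_I,\ldots,\phi_N|_I$ all lie in this space, so they are linearly dependent on $I$: there is a nonzero vector $(c_1,\ldots,c_N)$ with $\sum c_j\phi_j=0$ on $I$.

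Finally, we transfer this relation to all of $U$ by the identity theorem. The function $\sum c_j\phi_j$ is real-analytic on the connected interval $U$ and vanishes on a nonempty open subinterval, hence vanishes on $U$. This contradicts the linear independence of the $\phi_j$ on $U$.

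The only step needing care is this passage from local to global dependence. A direct argument that the $\phi_j$ are dependent on each interval, say, is exactly what fails for smooth functions, where the dependence can change from one interval to another. Here it is handled by two uses of analyticity: the identity theorem (i) to find an interval $I$ where $W_{N-1}$ is nonvanishing, and (ii) to globalize the relation from $I$ to $U$.
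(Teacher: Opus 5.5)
Your proof is correct and complete. The paper does not prove this lemma; it quotes it as the classical analytic Wronskian criterion, attributed to B\^ocher, with Bostan--Dumas as a modern reference. What you wrote is the standard argument: induction on $N$; then, on an interval where $W(\phi_1,\dots,\phi_{N-1})\neq0$, the monic operator $L$ annihilates all $N$ functions, so uniqueness for linear ODEs with continuous coefficients forces a dependence there; finally the identity theorem globalizes it. It supplies exactly what the paper takes off the shelf.

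One small correction to your closing commentary. Producing $I$ needs only continuity of $W(\phi_1,\dots,\phi_{N-1})$, since nonvanishing at one point persists on a neighbourhood. So analyticity is used exactly once, in the globalization step. That is precisely where the classical counterexample $\phi_1=x^2$, $\phi_2=x\lvert x\rvert$ on $\R$ escapes: your argument yields $\phi_2=\phi_1$ on $(0,\infty)$ and $\phi_2=-\phi_1$ on $(-\infty,0)$, and these do not patch together.

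For comparison, another common route is local and algebraic. At a point $x_0\in U$ the germs are independent (by the identity theorem). Gaussian elimination on their Taylor coefficients gives a basis with pairwise distinct orders $v_1<\dots<v_N$ and leading coefficients $c_j\neq0$; this changes $W$ only by a nonzero constant factor. The lowest-order term of the new Wronskian is $\prod_j c_j\prod_{i<j}(v_j-v_i)\,(x-x_0)^{v_1+\dots+v_N-N(N-1)/2}\neq0$. That version avoids ODE theory and works verbatim for formal power series in characteristic zero. Yours shows more transparently the single point at which analyticity is indispensable.
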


This is the classical analytic Wronskian criterion, going back at least
to B\^ocher \cite{Bocher1900}; see also \cite{BostanDumas2010} for a modern
treatment.

For the remainder of the local argument, fix a real-analytic function
$f:U\to\R$ which is not the restriction to $U$ of a rational function
in $\R(x)$. For $d\geq0$, put
\begin{equation}\label{eq:N-d-en}
N_d:=2(d+1)
\end{equation}
and consider the indexed family
\begin{equation}\label{eq:rational-family-en}
\mathcal F_d(f)
:=
\bigl(x^i f(x)^j\bigr)_{\substack{0\leq i\leq d\\ j\in\{0,1\}}}.
\end{equation}

\begin{lemma}[Rationality and linear dependence]
\label{lem:rational-independence-en}
Let $f:U\to\R$ be real-analytic. The following are equivalent.
\begin{enumerate}[label=\textup{(\roman*)},leftmargin=2.7em]
\item $f$ is the restriction to $U$ of a rational function in
$\R(x)$ with no pole in $U$;
\item the family $\mathcal F_d(f)$ is linearly dependent over $\R$
for some $d\geq0$.
\end{enumerate}
Consequently, under the standing hypothesis on $f$, the family
$\mathcal F_d(f)$ is linearly independent for every $d\geq0$.
\end{lemma}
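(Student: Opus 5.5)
We prove the two implications separately and then read off the final sentence as the contrapositive of (ii)$\Rightarrow$(i).

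\emph{(i)$\Rightarrow$(ii).} Suppose $f=P/S$ on $U$, where $P,S\in\R[x]$, $S\not\equiv0$, and $S$ has no zero in $U$. Let
\[
d:=\max\{\deg P,\deg S\}.
\]
The identity $S(x)f(x)-P(x)=0$ holds on $U$. Expanding $S(x)=\sum_i s_i x^i$ and $P(x)=\sum_i p_i x^i$ gives the relation
\[
\sum_{i\le d}s_i\,x^if(x)-\sum_{i\le d}p_i\,x^i=0 .
\]
This is a linear relation among the members of $\mathcal F_d(f)$. It is nontrivial because not all $s_i$ vanish, since $S\not\equiv0$.

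\emph{(ii)$\Rightarrow$(i).} Suppose we have a nontrivial relation
\[
\sum_{i\le d}\bigl(a_ix^i+b_ix^if(x)\bigr)=0\qquad\text{on }U .
\]
Set $A(x)=\sum_i a_ix^i$ and $B(x)=\sum_i b_ix^i$, so that $A+Bf\equiv0$ on $U$.

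\begin{enumerate}[label=\textup{(\alph*)},leftmargin=2.7em]
\item If $B\equiv0$, then $A\equiv0$ on the interval $U$. A polynomial vanishing on an interval is the zero polynomial, so all $a_i=0$. This contradicts nontriviality, hence $B\not\equiv0$.
\item Since $B\not\equiv0$, its zero set $Z\subset U$ is finite, and $f=-A/B$ on $U\setminus Z$. Write $R:=-A/B\in\R(x)$ in lowest terms, $R=P/S$ with $\gcd(P,S)=1$.
\item Let $x_0\in U$ be a zero of $S$. Near $x_0$, $f$ is bounded, being continuous on $U$. On the other hand, $|P/S|\to\infty$ as $x\to x_0$, because coprimality gives $P(x_0)\ne0$. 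Since $f=R$ on a punctured neighbourhood of $x_0$, this is impossible. So $S$ has no zero in $U$.
\item Now $f$ and $R$ are continuous on $U$ and agree off the finite set $Z$. By continuity, $f=R$ on all of $U$. Analyticity is not even needed for this step; only continuity of $f$ is used.
\end{enumerate}

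\emph{Consequence.} Under the standing hypothesis, $f$ is not the restriction of any rational function without poles on $U$. By (ii)$\Rightarrow$(i), no $\mathcal F_d(f)$ can be linearly dependent, so $\mathcal F_d(f)$ is linearly independent for every $d\ge0$.

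The only delicate step is (c), excluding poles of the reduced fraction inside $U$, and it is handled by the coprimality and boundedness argument above. I expect no genuine obstacle.
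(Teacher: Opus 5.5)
Your proof is correct and follows essentially the same route as the paper. Both use the relation $Sf-P=0$ for (i)$\Rightarrow$(ii); for the converse, both show $B\not\equiv0$, reduce $-A/B$ to lowest terms, and exclude poles in $U$ using continuity of $f$ and coprimality. The only cosmetic difference is that you contrast the boundedness of $f$ with the blow-up of $P/S$ at a zero of $S$, whereas the paper extends $A_0+B_0f=0$ to all of $U$ by continuity and evaluates at a zero of $B_0$ to produce a common zero of $A_0$ and $B_0$.
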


\begin{proof}
Suppose first that $f=P/Q$ on $U$, where $P,Q\in\R[X]$, $Q$ has no
zero in $U$, and $Q\neq0$. If
$d\geq\max\{\deg P,\deg Q\}$, then
\[
Q(x)f(x)-P(x)=0
\qquad(x\in U),
\]
which is a nontrivial linear relation among the members of
$\mathcal F_d(f)$.

Conversely, suppose that $\mathcal F_d(f)$ is linearly dependent. Then
there are polynomials $A,B\in\R[X]$, not both zero and of degree at
most $d$, such that
\begin{equation}\label{eq:A-B-f-relation-en}
A(x)+B(x)f(x)=0
\qquad(x\in U).
\end{equation}
If $B=0$, then $A$ vanishes on the open interval $U$, and hence
$A=0$, a contradiction. Thus $B\neq0$.

Let $D=\gcd(A,B)$ in $\R[X]$ and write $A=DA_0$, $B=DB_0$, with
$A_0$ and $B_0$ coprime. On the nonempty open set
$U\setminus\{D=0\}$, equation \eqref{eq:A-B-f-relation-en} gives
$A_0+B_0f=0$. Since $D$ has only finitely many zeros, continuity extends this identity to all of $U$:
\[
A_0(x)+B_0(x)f(x)=0
\qquad(x\in U).
\]
If $B_0(x_0)=0$ at some $x_0\in U$, then the last identity also gives
$A_0(x_0)=0$, contrary to the coprimality of $A_0$ and $B_0$.
Therefore $B_0$ has no zero in $U$, and
\[
f(x)=-\frac{A_0(x)}{B_0(x)}
\qquad(x\in U).
\]
This proves \textup{(i)}.
\end{proof}

Order the pairs
\[
(i,j)\in\{0,\ldots,d\}\times\{0,1\}
\]
lexicographically, and denote the corresponding ordered family of
functions $x^if(x)^j$ by
\[
\varphi_{d,1},\ldots,\varphi_{d,N_d}.
\]
Define
\begin{equation}\label{eq:W-d-en}
W_d(x)
:=
W(\varphi_{d,1},\ldots,\varphi_{d,N_d})(x).
\end{equation}
The ordering affects $W_d$ at most by a sign and therefore has no
bearing on its zero set.

\begin{corollary}\label{cor:wronskian-finite-en}
For every $d\geq0$, the function $W_d$ is real-analytic and not
identically zero on $U$. Consequently, if $I\Subset U$ is a compact
interval, then
\[
\set{x\in I:W_d(x)=0}
\]
is finite.
\end{corollary}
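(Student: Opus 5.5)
The argument combines three facts: each $\varphi_{d,k}$ is real-analytic, these functions are linearly independent (Lemma~\ref{lem:rational-independence-en}), and a nonzero real-analytic function on a connected interval has isolated zeros.

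\emph{Step 1: analyticity of $W_d$.} Each $\varphi_{d,k}(x)=x^if(x)^j$ is a product of real-analytic functions on $U$, so it is real-analytic. Derivatives of real-analytic functions are real-analytic. By \eqref{eq:wronskian-definition-en}, $W_d$ is a polynomial in the entries $\varphi_{d,k}^{(m)}$ with $0\le m\le N_d-1$. Hence $W_d$ is real-analytic on $U$.

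\emph{Step 2: $W_d\not\equiv0$.} Under the standing hypothesis that $f$ is not the restriction of a rational function without poles on $U$, the final assertion of Lemma~\ref{lem:rational-independence-en} shows that $\mathcal F_d(f)$ is linearly independent over $\R$ for every $d\ge0$. Since $U$ is a connected open interval, Lemma~\ref{lem:analytic-wronskian-en} applies directly and gives $W_d\not\equiv0$ on $U$. Reordering the family only changes the sign of $W_d$, so the choice of lexicographic order plays no role.

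\emph{Step 3: finiteness on compact $I\Subset U$.} Suppose the zero set $Z=\{x\in I: W_d(x)=0\}$ were infinite. Since $\overline I$ is compact, $Z$ would have an accumulation point $x_0\in\overline I\subseteq U$. By continuity, $W_d(x_0)=0$. Every Taylor coefficient of $W_d$ at $x_0$ must then vanish; otherwise $W_d(x)=(x-x_0)^m g(x)$ with $g(x_0)\neq0$, and $x_0$ would be an isolated zero. So the set of points of $U$ where all derivatives of $W_d$ vanish is nonempty. This set is closed by continuity of the derivatives. It is also open: at any such point, the convergent Taylor expansion shows that $W_d$ vanishes on a neighbourhood. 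Because $U$ is connected, the set is all of $U$, so $W_d\equiv0$, contradicting Step~2. Hence $Z$ is finite.

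The only point requiring care is in Step~3: the accumulation point must lie inside $U$ and not merely on the boundary of $U$. This is exactly what the hypothesis $I\Subset U$ guarantees.
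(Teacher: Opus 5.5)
Your proposal is correct and follows essentially the same route as the paper. The paper obtains linear independence from Lemma~\ref{lem:rational-independence-en}, deduces $W_d\not\equiv0$ from Lemma~\ref{lem:analytic-wronskian-en}, notes that $W_d$ is analytic as a determinant of derivatives, and applies the identity theorem at an accumulation point in $\overline I\subseteq U$; the only difference is that you prove the identity theorem rather than cite it.
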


\begin{proof}
The functions in $\mathcal F_d(f)$ are real-analytic and, by
Lemma~\ref{lem:rational-independence-en}, linearly independent.
Lemma~\ref{lem:analytic-wronskian-en} therefore gives
$W_d\not\equiv0$ on $U$. The function $W_d$ is real-analytic because
it is a determinant of derivatives of real-analytic functions. If it
had infinitely many zeros in $I$, compactness would give an
accumulation point in $U$, and the identity theorem would force
$W_d\equiv0$ on the connected interval $U$, a contradiction.
\end{proof}

\subsection{One-dimensional sublevel estimates}

We next record the elementary one-dimensional estimate underlying the
uniform argument. It expresses the principle that a function with a
derivative uniformly separated from zero cannot remain small on a
long set.

\begin{lemma}\label{lem:one-derivative-en}
Let $k\geq1$, let $I\subseteq\R$ be a nondegenerate compact interval,
and let $g\in C^k(I)$. Suppose that
\begin{equation}\label{eq:kth-lower-en}
\abs{g^{(k)}(x)}\geq\lambda>0
\qquad(x\in I).
\end{equation}
Then, for every $\eps>0$, the set
\[
E_g(\eps):=\set{x\in I:\abs{g(x)}\leq\eps}
\]
has at most $2k+1$ connected components and satisfies
\begin{equation}\label{eq:one-derivative-length-en}
|E_g(\eps)|
\leq
2k(2k+1)
\left(\frac{\eps}{\lambda}\right)^{1/k}.
\end{equation}
\end{lemma}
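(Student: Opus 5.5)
We argue by induction on $k$, proving the component bound and the length bound together. The induction rests on a single structural observation: since $g^{(k)}$ does not vanish on $I$, each lower derivative is monotone on only a controlled number of pieces.

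\emph{Component bound.} By continuity and the intermediate value theorem, $g^{(k)}$ has constant sign on $I$. Hence $g^{(k-1)}$ is strictly monotone and has at most one zero. By induction (Rolle), $g^{(j)}$ has at most $k-j$ zeros, so $g'$ has at most $k-1$ zeros. Therefore $I$ splits into at most $k$ closed subintervals on each of which $g$ is monotone.

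On a monotone piece $I_m$, the set $\{\abs{g}\le\eps\}\cap I_m$ is the preimage of $[-\eps,\eps]$ under a monotone continuous map, so it is a single (possibly empty) interval. The set $E_g(\eps)$ is the union of these at most $k$ intervals, so it has at most $k\le 2k+1$ components.

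An alternative that matches the stated constant more directly: $g(x)=\pm\eps$ has at most $k$ solutions each, since $g\mp\eps$ has $k$th derivative $g^{(k)}\neq0$. The endpoints of components of $E_g(\eps)$ lie among these at most $2k$ points together with the endpoints of $I$, which gives at most $2k+1$ components.

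\emph{Length bound.} This step is the main obstacle, since we need the correct exponent $1/k$ with an explicit constant. I would follow the classical van der Corput route via Lagrange interpolation.

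Suppose $|E_g(\eps)|=L>0$. Since $E_g(\eps)$ is a finite union of intervals, we can pick points $x_0<x_1<\dots<x_k$ in $E_g(\eps)$ with $x_j-x_i\ge (j-i)L/k$. To do this, parametrize $E_g(\eps)$ by arclength: let $\sigma(t)$ be the point of $E_g(\eps)$ having measure $t$ of $E_g(\eps)$ to its left, and take $x_i=\sigma(iL/k)$. Gaps between components only increase the distances.

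The divided difference satisfies
\[
g[x_0,\dots,x_k]=\frac{g^{(k)}(\zeta)}{k!}
\]
for some $\zeta\in I$. It also equals
\[
g[x_0,\dots,x_k]=\sum_i \frac{g(x_i)}{\prod_{j\ne i}(x_i-x_j)}.
\]
The lower spacing bound gives
\[
\prod_{j\ne i}\abs{x_i-x_j}\ge (L/k)^k\, i!\,(k-i)!.
\]
Combining these yields
\[
\frac{\lambda}{k!}\le \eps\,(k/L)^k\sum_i\frac{1}{i!(k-i)!}=\eps\,(k/L)^k\frac{2^k}{k!}.
\]
Hence
\[
L\le 2k(\eps/\lambda)^{1/k}\le 2k(2k+1)(\eps/\lambda)^{1/k}.
\]

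If instead one prefers an argument per monotone piece, a cruder induction also works: on each component one bounds the length and multiplies by the number of components, giving the product $2k\cdot(2k+1)$. The interpolation argument, however, already gives the required bound directly. The only care needed is the measure-parametrized choice of nodes when $E_g(\eps)$ is disconnected, and the degenerate case $L=0$, which is trivial.
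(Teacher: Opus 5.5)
Your proof is correct, but for the length bound it takes a different route from the paper.

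\textbf{The paper's argument.} The paper works one component at a time. On a nondegenerate component $[a,b]$ it takes the equally spaced nodes $x_j=a+j(b-a)/k$ and applies the mean-value theorem for the $k$th forward difference,
\[
h^kg^{(k)}(\theta)=\sum_{j=0}^k(-1)^{k-j}\binom kj g(x_j),
\]
which gives $b-a\le 2k(\eps/\lambda)^{1/k}$. It then multiplies by the component count $2k+1$. That count is obtained exactly as in your ``alternative'' argument, from the at most $2k$ solutions of $g=\pm\eps$.

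\textbf{Your argument.} You instead spread $k+1$ nodes over the whole, possibly disconnected, sublevel set according to its measure, and use Lagrange divided differences. The pairwise node separations are then bounded below by measure rather than by the length of a single component. This gives
\[
|E_g(\eps)|\le 2k(\eps/\lambda)^{1/k}
\]
directly, with no factor $2k+1$, and the component count plays no role in the length estimate. Your monotone-pieces argument also improves the component count from $2k+1$ to $k$.

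\textbf{Comparison.} The paper's version is slightly more elementary, since its nodes automatically lie in $E_g(\eps)$ (each component is an interval). Yours is the standard van der Corput-type sublevel argument and gives the sharper constant. The one step that needs a precise definition is the node choice. For example, take $x_0=\min E_g(\eps)$ and
\[
x_i=\min\set{x\in I:\ \abs{E_g(\eps)\cap(-\infty,x]}=iL/k}\qquad(1\le i\le k).
\]
Each such $x_i$ lies in $E_g(\eps)$ because the set is closed and the distribution function is continuous.

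Two minor remarks. Your opening sentence announces an induction on $k$ proving both bounds together, but the argument you actually give never uses one. Neither sharpening matters downstream, since the constants in Lemma~\ref{lem:uniform-sublevel-en} are not optimized.
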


\begin{proof}
Since $g^{(k)}$ is continuous and nowhere zero on $I$, it has constant
sign. By Rolle's theorem, for every $c\in\R$ the equation $g(x)=c$
has at most $k$ solutions. Hence the relative boundary of
\[
E_g(\varepsilon)=\{x\in I:|g(x)|\leq\varepsilon\}
\]
has at most $2k$ points, and $E_g(\varepsilon)$ has at most
$2k+1$ connected components.

Let $[a,b]$ be a nondegenerate component of $E_g(\varepsilon)$, and put
\[
h:=\frac{b-a}{k},
\qquad
x_j:=a+jh
\quad(0\leq j\leq k).
\]
By the mean-value theorem for finite differences, for some
$\theta\in(a,b)$,
\[
h^k g^{(k)}(\theta)
=
\sum_{j=0}^k(-1)^{k-j}\binom{k}{j}g(x_j).
\]
Since $|g(x_j)|\leq\varepsilon$,
\[
h^k\lambda
\leq
2^k\varepsilon,
\]
and therefore
\[
b-a=kh
\leq
2k\left(\frac{\varepsilon}{\lambda}\right)^{1/k}.
\]
Summing over at most $2k+1$ nondegenerate components gives
\[
|E_g(\varepsilon)|
\leq
2k(2k+1)
\left(\frac{\varepsilon}{\lambda}\right)^{1/k}.
\]
\end{proof}

We now pass from a lower bound for one prescribed derivative to a
bound that is uniform over all normalized linear combinations of a
fixed finite family. Wronskian nondegeneracy gives a uniform lower bound for at least
one derivative at each point. Compactness of the product of the
coefficient sphere and the source interval makes the resulting
local estimates uniform.

\begin{lemma}[Uniform sublevel bounds under Wronskian nondegeneracy]
\label{lem:uniform-sublevel-en}
Let $J\subseteq\R$ be a nondegenerate compact interval, let $N\geq2$,
and let $\phi_1,\ldots,\phi_N$ be of class $C^{N-1}$ on a neighborhood
of $J$. Assume that
\begin{equation}\label{eq:wronskian-separated-en}
\inf_{x\in J}\abs{W(\phi_1,\ldots,\phi_N)(x)}>0.
\end{equation}
For $c=(c_1,\ldots,c_N)\in\R^N$, write
\[
G_c(x)=\sum_{j=1}^N c_j\phi_j(x).
\]
Then there exist constants
\[
C_{\mathrm{sub}}>0,
\qquad
R_{\mathrm{sub}}\in\Z_{>0},
\qquad
\eps_0\in(0,1)
\]
such that, for every $c\in\R^N$ with $\norm{c}_2=1$ and every
$0<\eps<\eps_0$, the set
\[
\set{x\in J:\abs{G_c(x)}\leq\eps}
\]
is a union of at most $R_{\mathrm{sub}}$ intervals and satisfies
\begin{equation}\label{eq:uniform-sublevel-en}
\left|\set{x\in J:\abs{G_c(x)}\leq\eps}\right|
\leq C_{\mathrm{sub}}\eps^{1/(N-1)}.
\end{equation}
\end{lemma}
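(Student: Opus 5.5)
The plan is to reduce the uniform statement to Lemma~\ref{lem:one-derivative-en} by showing that, at every point of $J$ and for every unit vector $c$, some derivative $G_c^{(k)}$ with $0\le k\le N-1$ is bounded below by a fixed positive constant.

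\emph{Step 1: pointwise nondegeneracy.} Consider the Wronskian matrix $M(x)=(\phi_j^{(k-1)}(x))_{k,j}$, so that
\[
(G_c(x),G_c'(x),\ldots,G_c^{(N-1)}(x))^{T}=M(x)c .
\]
By \eqref{eq:wronskian-separated-en} and continuity of the entries on the compact set $J$, the inverses $M(x)^{-1}$ are uniformly bounded: $\norm{M(x)^{-1}}\le K$ on $J$. Hence $\norm{M(x)c}_2\ge 1/K$ for $\norm{c}_2=1$. It follows that, for each $(x,c)$, some $k\in\{0,\ldots,N-1\}$ satisfies
\[
\abs{G_c^{(k)}(x)}\ge \lambda_0:=\frac{1}{K\sqrt N}.
\]

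\emph{Step 2: compactness.} The map $(x,c)\mapsto G_c^{(k)}(x)$ is continuous on $J\times S^{N-1}$. In fact $\abs{G_c^{(k)}(x)-G_{c'}^{(k)}(x')}$ is controlled by $\norm{c-c'}$ times a bound on the derivatives plus the modulus of continuity of $\phi_j^{(k)}$. So there is $\delta>0$, independent of $(x,c)$, with the following property: whenever $\abs{G_c^{(k)}(x_0)}\ge\lambda_0$, we have $\abs{G_c^{(k)}}\ge\lambda_0/2$ on $[x_0-\delta,x_0+\delta]\cap J$. This uses uniform continuity of the $\phi_j^{(k)}$ on a compact neighborhood together with $\abs{c_j}\le1$.

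Cover $J$ by $L\le |J|/\delta+1$ intervals $I_1,\dots,I_L$ of length at most $\delta$. Fix $c$. On each $I_m$, pick its left endpoint $x_0$ and the index $k_m$ from Step 1, so that $\abs{G_c^{(k_m)}}\ge\lambda_0/2$ on $I_m$.

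\emph{Step 3: apply the one-derivative lemma.} There are two cases on each $I_m$. If $k_m\ge1$, Lemma~\ref{lem:one-derivative-en} gives at most $2k_m+1\le 2N-1$ components and measure at most
\[
2k_m(2k_m+1)(2\eps/\lambda_0)^{1/k_m}\le C(2\eps/\lambda_0)^{1/(N-1)},
\]
valid once $2\eps/\lambda_0<1$. If $k_m=0$, then $\abs{G_c}\ge\lambda_0/2>\eps$ on $I_m$ as soon as $\eps<\lambda_0/2$, so the sublevel set in $I_m$ is empty.

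Now choose $\eps_0=\min(1,\lambda_0)/2$. Summing over $m$ yields at most $R_{\mathrm{sub}}=L(2N-1)$ intervals. (Pieces from adjacent $I_m$ merge or remain separate, but in either case the count is not increased.) The total measure is at most $C_{\mathrm{sub}}\eps^{1/(N-1)}$ with $C_{\mathrm{sub}}=L\cdot 2N(2N)(2/\lambda_0)^{1/(N-1)}$, which is independent of $c$.

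The main point requiring care is Step 2: the uniformity of $\delta$ over the coefficient sphere. It follows from joint continuity on the compact set $J'\times S^{N-1}$, where $J'$ is a slightly enlarged compact neighborhood (or from the explicit bound via $\norm{c}\le1$). A minor detail is that the intervals $I_m$ must be nondegenerate, as Lemma~\ref{lem:one-derivative-en} requires; this is arranged by choosing the cover appropriately.
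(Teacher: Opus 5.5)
Your proposal is correct and follows the paper's proof: a uniform bound on the inverse jet matrix gives, at every point and for every unit $c$, some $\abs{G_c^{(k)}}\geq\lambda_0$ with $k\leq N-1$, and localization plus Lemma~\ref{lem:one-derivative-en} on boundedly many pieces gives the length and component bounds with constants independent of $c$. The only variation is in the compactness step: you get a partition of $J$ that does not depend on $c$ from the equicontinuity of $\{G_c^{(k)}\}_{\norm{c}_2=1}$ (using $\abs{c_j}\leq1$), whereas the paper extracts a finite subcover of $\mathbb S^{N-1}\times J$, and either version gives the lemma.
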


\begin{proof}
Consider the jet matrix
\[
\mathcal A(x)
=
\bigl(\phi_j^{(k)}(x)\bigr)_{
\substack{0\leq k\leq N-1\\1\leq j\leq N}}.
\]
Its determinant is the Wronskian in
\eqref{eq:wronskian-separated-en}; hence $\mathcal A(x)$ is invertible
for every $x\in J$. The least singular value of $\mathcal A(x)$
depends continuously on $x$. Since $J$ is compact, there exists
$\sigma>0$ such that
\begin{equation}\label{eq:uniform-singular-value-en}
\norm{\mathcal A(x)c}_2\geq\sigma\norm{c}_2
\qquad(x\in J,\ c\in\R^N).
\end{equation}
For $\norm{c}_2=1$,
\[
\mathcal A(x)c
=
\bigl(G_c(x),G_c'(x),\ldots,G_c^{(N-1)}(x)\bigr)^{\mathsf T},
\]
and therefore
\[
\max_{0\leq k\leq N-1}\abs{G_c^{(k)}(x)}
\geq
\eta,
\qquad
\eta:=\frac{\sigma}{\sqrt N},
\]
for every $(c,x)\in\mathbb S^{N-1}\times J$.

Fix $(c_0,x_0)\in\mathbb S^{N-1}\times J$. There is an index
$k(c_0,x_0)\in\{0,\ldots,N-1\}$ for which
\[
\abs{G_{c_0}^{(k(c_0,x_0))}(x_0)}\geq\eta.
\]
Continuity in $(c,x)$ yields a relatively open neighborhood $U$ of
$c_0$ in $\mathbb S^{N-1}$ and a compact interval $I\subseteq J$,
with $x_0\in\Int_J(I)$, such that
\[
\abs{G_c^{(k(c_0,x_0))}(x)}\geq\frac{\eta}{2}
\qquad(c\in U,\ x\in I).
\]
These sets $U\times\Int_J(I)$ cover the compact space
$\mathbb S^{N-1}\times J$. Choose a finite subcover
\[
U_\nu\times\Int_J(I_\nu),
\qquad1\leq\nu\leq L,
\]
and let $k_\nu\in\{0,\ldots,N-1\}$ denote the corresponding derivative
order. Thus
\begin{equation}\label{eq:local-large-jet-en}
\abs{G_c^{(k_\nu)}(x)}\geq\frac{\eta}{2}
\qquad(c\in U_\nu,\ x\in I_\nu).
\end{equation}
The finite cover, and hence $L$ and all constants derived from it,
depend only on $J$ and the family $\phi_1,\ldots,\phi_N$, and not on
the subsequent choice of $c\in\mathbb S^{N-1}$.

Fix $c\in\mathbb S^{N-1}$ and let
\[
\mathcal N(c):=\set{\nu\in\{1,\ldots,L\}:c\in U_\nu}.
\]
Then the intervals $I_\nu$, $\nu\in\mathcal N(c)$, cover $J$ by their
relative interiors. Put
\[
\eps_0:=\min\left\{\frac12,\frac{\eta}{4}\right\}.
\]
If $0<\eps<\eps_0$ and $k_\nu=0$, then
\eqref{eq:local-large-jet-en} shows that the sublevel set does not meet
$I_\nu$. If $1\leq k_\nu\leq N-1$, Lemma~\ref{lem:one-derivative-en},
with $\lambda=\eta/2$, gives
\[
\left|\set{x\in I_\nu:\abs{G_c(x)}\leq\eps}\right|
\leq
2k_\nu(2k_\nu+1)
\left(\frac{2\eps}{\eta}\right)^{1/k_\nu}.
\]
Since $0<\eps<1$ and $k_\nu\leq N-1$, this is at most
\[
C_*\eps^{1/(N-1)},
\qquad
C_*:=
\max_{1\leq k\leq N-1}
2k(2k+1)\left(\frac{2}{\eta}\right)^{1/k}.
\]
The same lemma shows that each such local sublevel set has at most
$2N-1$ connected components.

Since $\#\mathcal N(c)\leq L$, subadditivity of length now gives
\[
\left|\set{x\in J:\abs{G_c(x)}\leq\eps}\right|
\leq
LC_*\eps^{1/(N-1)}.
\]
Moreover, this set has at most $L(2N-1)$ connected components.
Thus one may take
\[
C_{\mathrm{sub}}:=LC_*,
\qquad
R_{\mathrm{sub}}:=L(2N-1),
\]
and these constants, as well as $\eps_0$, depend only on $J$ and the
family $\phi_1,\ldots,\phi_N$, and not on $c$.
\end{proof}

We specialize this result to the target-linear polynomials that arise
from the determinant argument. For
\[
P(X,Y)=A(X)+B(X)Y
=\sum_{i=0}^d c_{i0}X^i+\sum_{i=0}^d c_{i1}X^iY,
\]
write
\[
\norm{P}_2
:=
\left(\sum_{i=0}^d(c_{i0}^2+c_{i1}^2)\right)^{1/2}.
\]

\begin{corollary}[Uniform rational-relation sublevel estimate]
\label{cor:polynomial-sublevel-en}
Fix $d\geq0$ and a nondegenerate compact interval $J\Subset U$ on which $W_d$ does
not vanish. Put $N=N_d=2(d+1)$. Then there exist constants
\[
C_{d,J}>0,
\qquad
R_{d,J}\in\Z_{>0},
\qquad
\eps_{d,J}>0
\]
such that, for every polynomial
\[
P(X,Y)=A(X)+B(X)Y,
\qquad
\deg A,\deg B\leq d,
\qquad
\norm{P}_2=1,
\]
and every $0<\eps<\eps_{d,J}$, the set
\[
\set{x\in J:\abs{P(x,f(x))}\leq\eps}
\]
is a union of at most $R_{d,J}$ intervals and satisfies
\begin{equation}\label{eq:poly-sublevel-en}
\left|\set{x\in J:\abs{P(x,f(x))}\leq\eps}\right|
\leq C_{d,J}\eps^{1/(N-1)}.
\end{equation}
The constants depend only on $d$, $J$, and the restriction of $f$ to a
neighborhood of $J$.
\end{corollary}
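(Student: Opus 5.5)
This is a direct specialization of Lemma~\ref{lem:uniform-sublevel-en}. I will take $N=N_d=2(d+1)\geq2$ and $\phi_1,\ldots,\phi_N$ to be the ordered family $\varphi_{d,1},\ldots,\varphi_{d,N_d}$, that is, the functions $x^if(x)^j$ with $0\leq i\leq d$ and $j\in\{0,1\}$ in lexicographic order. The first step is to check the hypotheses of that lemma. Since $J\Subset U$ and $f$ is real-analytic on $U$, each $\varphi_{d,k}$ is real-analytic, and in particular $C^{N-1}$, on an open neighborhood of $J$ contained in $U$. The Wronskian of this family is exactly $W_d$. By assumption $W_d$ has no zero on the compact interval $J$, and it is continuous, so $\inf_{x\in J}|W_d(x)|>0$. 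This is condition~\eqref{eq:wronskian-separated-en}. The analyticity and the nonrationality of $f$ are used only to make this hypothesis meaningful (compare Corollary~\ref{cor:wronskian-finite-en}). They are not needed again here.

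The second step identifies the objects. Write
\[
P(X,Y)=\sum_{i=0}^d c_{i0}X^i+\sum_{i=0}^d c_{i1}X^iY
\]
and let $c\in\R^N$ be the vector of the coefficients $c_{ij}$, listed in the same lexicographic order. Then $P(x,f(x))=\sum_k c_k\varphi_{d,k}(x)=G_c(x)$, and by the definition of $\norm{\cdot}_2$ on such polynomials we have $\norm{P}_2=\norm{c}_2$. The correspondence $P\leftrightarrow c$ is a bijection between the polynomials $A+BY$ with $\deg A,\deg B\leq d$ and $\R^N$. Hence the normalization $\norm{P}_2=1$ is exactly the condition $c\in\mathbb S^{N-1}$, and the sublevel set $\{x\in J:|P(x,f(x))|\leq\eps\}$ coincides with $\{x\in J:|G_c(x)|\leq\eps\}$.

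The third step applies Lemma~\ref{lem:uniform-sublevel-en} to obtain $C_{\mathrm{sub}}$, $R_{\mathrm{sub}}$ and $\eps_0$. I then set
\[
C_{d,J}:=C_{\mathrm{sub}},\qquad R_{d,J}:=R_{\mathrm{sub}},\qquad \eps_{d,J}:=\eps_0.
\]
This gives both the bound on the number of intervals and the length estimate~\eqref{eq:poly-sublevel-en} with exponent $1/(N-1)$. For the dependence of the constants, I will note that the lemma's constants depend only on $J$ and on the family $\phi_1,\ldots,\phi_N$ restricted to a neighborhood of $J$. That family is determined by $d$ and by $f$ near $J$, which is the claimed dependence.

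I do not expect a real obstacle. The only point requiring care is the bookkeeping: the coefficient ordering must match the ordering used in $W_d$, and the two $\ell^2$ norms must be seen to agree. Since the ordering changes $W_d$ at most by a sign, neither the nonvanishing of $W_d$ nor the norm of $c$ is affected.
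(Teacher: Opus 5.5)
Your proposal is correct and follows the paper's proof essentially verbatim. Like the paper, you order the coefficients to match the family defining $W_d$, so that $P(x,f(x))=G_c(x)$ with $\norm{c}_2=\norm{P}_2=1$. You then use continuity and compactness to bound $|W_d|$ away from zero on $J$, and invoke Lemma~\ref{lem:uniform-sublevel-en}.
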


\begin{proof}
Arrange the coefficients $(c_{i0},c_{i1})$ in the same order as the
family used to define $W_d$. Then $P(x,f(x))$ is the corresponding
linear combination of the members of $\mathcal F_d(f)$, and its
coefficient vector has Euclidean norm one. Since $W_d$ is continuous,
nonzero on the compact interval $J$, and hence bounded away from zero,
Lemma~\ref{lem:uniform-sublevel-en} applies and gives the assertion.
\end{proof}

\section{Anisotropic determinant estimates}
\label{sec:determinant}

We compare determinants formed from rational points close to
the graph of $f$. Smoothness bounds these determinants from
above on short source intervals, while clearing the source and
target denominators separately gives a lower bound for every
nonzero determinant. Their comparison will force an auxiliary
relation of degree one in the target variable.

For vectors $v_1,\ldots,v_s\in\R^N$, we equip the exterior power
$\bigwedge^s\R^N$ with its Euclidean norm. Thus
\[
\norm{v_1\wedge\cdots\wedge v_s}^2
=
\det\bigl(\langle v_i,v_j\rangle\bigr)_{1\leq i,j\leq s}.
\]
In particular, when $s=N$,
\[
\norm{v_1\wedge\cdots\wedge v_N}
=
\abs{\det(v_1,\ldots,v_N)}.
\]
We shall also use
\[
\norm{v\wedge\omega}\leq\norm{v}\,\norm{\omega}
\]
for $v\in\R^N$ and $\omega\in\bigwedge^s\R^N$.

\subsection{Exterior decay along a smooth curve}

Several values of a smooth one-dimensional curve taken in an interval
of length $\rho$ become increasingly dependent as $\rho\to0$. The
exponent below reflects the successive Taylor orders
$0,1,\ldots,s-1$.

\begin{lemma}\label{lem:wedge-curve-en}
Let $J\subseteq\R$ be a nondegenerate compact interval, let
$1\leq s\leq N$, and let
\[
\Phi:J\longrightarrow\R^N
\]
be the restriction to $J$ of a $C^s$ map defined on a neighborhood of
$J$. There exists a constant $C_\Phi>0$ such that, whenever
$0<\rho\leq1$ and $t_1,\ldots,t_s$ lie in a common subinterval of $J$
of length at most $\rho$, one has
\begin{equation}\label{eq:wedge-bound-en}
\norm{\Phi(t_1)\wedge\cdots\wedge\Phi(t_s)}
\leq C_\Phi\rho^{s(s-1)/2}.
\end{equation}
\end{lemma}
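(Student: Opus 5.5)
We use Taylor expansion about a common base point combined with multilinearity of the wedge product. Fix a subinterval $[t_0,t_0+\rho]\subseteq J$ containing $t_1,\ldots,t_s$. For each $i$, expand $\Phi$ to order $s-1$ with integral remainder:
\[
\Phi(t_i)=\sum_{k=0}^{s-1}\frac{(t_i-t_0)^k}{k!}\Phi^{(k)}(t_0)+\mathcal R_i,
\qquad
\norm{\mathcal R_i}\leq \frac{M_s}{s!}\rho^{s},
\]
where $M_s:=\sup_{J}\norm{\Phi^{(s)}}$. This supremum is finite because $\Phi$ is $C^s$ on a neighbourhood of the compact interval $J$. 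A cruder expansion to lower order will also be needed, so we keep the constants $M_k:=\sup_J\norm{\Phi^{(k)}}$ for all $0\le k\le s$.

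The cleanest route avoids the remainder bookkeeping by using divided differences. We first reduce to distinct $t_i$. If two of the $t_i$ coincide the wedge vanishes, so we may assume the $t_i$ are distinct; the general case then follows by continuity. Next, we perform a unimodular-up-to-scaling change of basis in the span. Newton's interpolation expresses $\Phi(t_i)$ as
\[
\Phi(t_i)=\sum_{k=0}^{i-1}\prod_{l<k}(t_i-t_{l+1})\,\Phi[t_1,\ldots,t_{k+1}].
\]
This relation between $(\Phi(t_1),\ldots,\Phi(t_s))$ and the divided differences $(\Phi[t_1],\Phi[t_1,t_2],\ldots,\Phi[t_1,\ldots,t_s])$ is lower triangular with diagonal entries $\prod_{l<i-1}(t_i-t_{l+1})$. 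Hence
\[
\Phi(t_1)\wedge\cdots\wedge\Phi(t_s)
=\prod_{1\le l<i\le s}(t_i-t_l)\;
\Phi[t_1]\wedge\Phi[t_1,t_2]\wedge\cdots\wedge\Phi[t_1,\ldots,t_s],
\]
the Vandermonde factor. Each of its $s(s-1)/2$ factors has modulus at most $\rho$.

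It remains to bound the divided differences uniformly. By the Hermite--Genocchi formula (or the mean value theorem for divided differences applied componentwise), each divided difference satisfies
\[
\norm{\Phi[t_1,\ldots,t_{k+1}]}\le \sqrt N\,M_k/k!
\]
for every $0\le k\le s-1$, independently of $\rho$ and of the points. Combining this with Hadamard's inequality $\norm{v_1\wedge\cdots\wedge v_s}\le\prod\norm{v_i}$ (iterating the stated inequality $\norm{v\wedge\omega}\le\norm v\norm\omega$) gives \eqref{eq:wedge-bound-en} with
\[
C_\Phi=\prod_{k=0}^{s-1}\sqrt N M_k/k!.
\]
Only $C^{s-1}$ regularity is actually used, which is weaker than the stated $C^s$ hypothesis.

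The main point needing care is the divided-difference bound. The Hermite--Genocchi integral representation requires the segment hull of the nodes to lie in the domain of smoothness. This holds because all the nodes lie in $J$, and $\Phi$ is $C^s$ on a neighbourhood of $J$. The restriction $\rho\le1$ is not needed in this argument, but it is harmless.
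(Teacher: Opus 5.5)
Your argument is correct, but it takes a genuinely different route from the paper.

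\textbf{The paper's route.} It stays with Taylor's formula. Each $\Phi(t_i)$ is expanded about $a=\min_i t_i$ to order $s-1$ with integral remainder; this is where the $C^s$ hypothesis enters. The wedge product is then expanded by multilinearity and the exponents are counted. A nonvanishing term with $k$ remainder factors must use $s-k$ distinct Taylor vectors $v_r=\Phi^{(r)}(a)/r!$. Such a term is therefore $O(\ell^{ks+(s-k)(s-k-1)/2})=O(\ell^{s(s-1)/2+k(k+1)/2})$, and $\ell\leq\rho\leq1$ absorbs the surplus $k(k+1)/2$.

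\textbf{Your route.} You replace this bookkeeping with an exact identity. The triangular Newton change of basis factors out the Vandermonde product $\prod_{l<i}(t_i-t_l)$. The remaining divided differences are bounded uniformly by the mean value theorem for divided differences, applied componentwise (or by Hermite--Genocchi).

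\textbf{What each buys.} Your approach gives the sharper bound $C_\Phi\prod_{l<i}\abs{t_i-t_l}$. It also needs only $C^{s-1}$ regularity, gives an explicit constant, and never uses $\rho\leq1$. The paper's approach is elementary and self-contained. Its exponent count is also the same one that reappears in the proof of Lemma~\ref{lem:perturbed-determinant-en}: there the error vectors of size $\delta\leq\rho^N$ play the role of the Taylor remainder, via the identical identity with surplus $r(r+1)/2$. Since that lemma uses the present one only as a black box, your version would plug in equally well.

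\textbf{Cosmetic points.} Your opening Taylor paragraph is never used and can be deleted. Coincident nodes make the wedge vanish outright, so no continuity argument is needed. Finally, take $\max\{C_\Phi,1\}$ to guarantee a positive constant in case some $M_k=0$.
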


\begin{proof}
Let
\[
a:=\min_{1\leq i\leq s}t_i,
\qquad
\ell:=\max_{1\leq i\leq s}t_i-a.
\]
If $\ell=0$, the assertion is immediate: for $s\geq2$ the exterior
product vanishes, while for $s=1$ it follows after taking
$C_\Phi\geq\sup_J\norm{\Phi}$. Assume henceforth that
$0<\ell\leq\rho$. Taylor's formula with integral remainder gives
\begin{equation}\label{eq:vector-taylor-en}
\Phi(t_i)
=
\sum_{r=0}^{s-1}(t_i-a)^rv_r+R_i,
\qquad
v_r:=\frac{\Phi^{(r)}(a)}{r!},
\end{equation}
where
\[
R_i
=
\frac{1}{(s-1)!}
\int_a^{t_i}(t_i-u)^{s-1}\Phi^{(s)}(u)\,du.
\]
Since $\Phi^{(s)}$ is bounded on a neighborhood of $J$, there is a
constant $C_0>0$, independent of $a$, $\ell$, and $i$, such that
\begin{equation}\label{eq:remainder-bound-en}
\norm{R_i}\leq C_0\abs{t_i-a}^s\leq C_0\ell^s.
\end{equation}
Also $\abs{t_i-a}^r\leq\ell^r$ for $0\leq r\leq s-1$.

Expand the exterior product of the expressions in
\eqref{eq:vector-taylor-en}. Consider a term containing exactly $k$
remainder vectors. The remaining $s-k$ factors are chosen from
$v_0,\ldots,v_{s-1}$. A term in which some $v_r$ occurs twice vanishes.
Thus, in every nonzero term, the corresponding indices
$r_1,\ldots,r_{s-k}$ are distinct and
\[
r_1+\cdots+r_{s-k}
\geq
0+1+\cdots+(s-k-1)
=
\frac{(s-k)(s-k-1)}{2}.
\]
Such a term is therefore bounded by a constant times
\[
\ell^{ks+(s-k)(s-k-1)/2}.
\]
The identity
\[
ks+\frac{(s-k)(s-k-1)}{2}
=
\frac{s(s-1)}{2}+\frac{k(k+1)}{2}
\]
shows that this is
$O_\Phi(\ell^{s(s-1)/2})$. There are only finitely many terms, and the
vectors $v_0,\ldots,v_{s-1}$ are uniformly bounded as $a$ ranges over
$J$. Hence
\[
\norm{\Phi(t_1)\wedge\cdots\wedge\Phi(t_s)}
\leq C_\Phi\ell^{s(s-1)/2}
\leq C_\Phi\rho^{s(s-1)/2}.
\]
\end{proof}

\subsection{The perturbed determinant}

For $d\geq0$, put $N=2(d+1)$, order the pairs
\[
(i,j)\in\{0,\ldots,d\}\times\{0,1\}
\]
as in Section~\ref{sec:wronskians}, and define
\begin{equation}\label{eq:Psi-d-en}
\Psi_d(x,y):=\bigl(x^iy^j\bigr)_{
\substack{0\leq i\leq d\\j\in\{0,1\}}}\in\R^N,
\qquad
\Phi_d(x):=\Psi_d(x,f(x)).
\end{equation}

\begin{lemma}[Perturbed target-linear determinant]
\label{lem:perturbed-determinant-en}
Let $d\geq0$, put $N=2(d+1)$, and let $J\Subset U$ be a nondegenerate compact
interval. There exists a constant $C_{d,J}^{\mathrm{pert}}>0$, depending
only on $d$, $J$, and the restriction of $f$ to a neighborhood of $J$,
with the following property. Suppose that $0<\rho\leq1$, that
$x_1,\ldots,x_N\in J$ lie in a common subinterval of $J$ of length at
most $\rho$, and that $y_1,\ldots,y_N\in\R$ satisfy
\begin{equation}\label{eq:vertical-error-en}
\abs{y_k-f(x_k)}\leq\delta\leq\rho^N
\qquad(1\leq k\leq N).
\end{equation}
Regarding the vectors $\Psi_d(x_k,y_k)$ as the rows of an
$N\times N$ matrix, one has
\begin{equation}\label{eq:perturbed-simple-en}
\abs{\det\bigl(\Psi_d(x_k,y_k)\bigr)_{1\leq k\leq N}}
\leq
C_{d,J}^{\mathrm{pert}}\rho^{N(N-1)/2}.
\end{equation}
\end{lemma}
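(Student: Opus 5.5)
The plan is to compare the perturbed matrix with the unperturbed one built from the curve. Put $u_k:=\Phi_d(x_k)=\Psi_d(x_k,f(x_k))$ and $w_k:=\Psi_d(x_k,y_k)$. Lemma~\ref{lem:wedge-curve-en}, applied with $s=N$ to $\Phi=\Phi_d$, gives
\[
\abs{\det(u_1,\ldots,u_N)}
=\norm{u_1\wedge\cdots\wedge u_N}
\leq C_{\Phi_d}\rho^{N(N-1)/2}.
\]
The hypothesis on $\Phi_d$ holds because $f$ is real-analytic, hence $C^N$, on a neighborhood of $J$.

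Next I control the difference. Since $\Psi_d$ is affine in $y$, we have $w_k-u_k=(y_k-f(x_k))\,e_k$, where $e_k:=\bigl(x_k^i\bigr)_{i}$ sits in the $j=1$ slots and zeros sit in the $j=0$ slots. Hence $\norm{w_k-u_k}\leq C\delta$ uniformly on $J$. This is not enough by itself: a crude multilinear expansion $\det(w)=\det(u)+\sum(\text{terms with }\ge1\text{ difference vector})$ bounds each perturbed term by $\delta$ times an $(N-1)$-fold wedge. Using Lemma~\ref{lem:wedge-curve-en} for the remaining $u$'s only gains $\rho^{(N-1)(N-2)/2}$. Combined with $\delta\leq\rho^N$, this yields $\rho^{N+(N-1)(N-2)/2}$. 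Since $N+(N-1)(N-2)/2-N(N-1)/2=1$, the exponent is at least $N(N-1)/2$, and $\rho\le1$ closes the argument.

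To be precise, I expand $\det(w_1,\ldots,w_N)=\sum_{S\subseteq\{1,\ldots,N\}}D_S$, where $D_S$ is the determinant with $w_k-u_k$ in the positions $k\in S$ and $u_k$ in the others. For $\abs S=m\geq1$, I bound
\[
\abs{D_S}\leq\prod_{k\in S}\norm{w_k-u_k}\cdot\norm{\textstyle\bigwedge_{k\notin S}u_k}
\le (C\delta)^m\, C_\Phi'\rho^{(N-m)(N-m-1)/2},
\]
applying Lemma~\ref{lem:wedge-curve-en} with $s=N-m$ (or the trivial bound when $N-m=0$). This uses the inequality $\norm{v\wedge\omega}\le\norm v\norm\omega$ iterated, together with the fact that the wedge is antisymmetric, so reordering only changes the sign.

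With $\delta\leq\rho^N$, the exponent becomes $mN+(N-m)(N-m-1)/2$. This equals $N(N-1)/2+m(m+1)/2$, which is at least $N(N-1)/2$; it is the same identity as in the proof of Lemma~\ref{lem:wedge-curve-en}. Since $\rho\le1$ and there are $2^N$ subsets, summing gives the stated bound with a constant depending only on $d$, $J$, and $f$ near $J$. These are the constants $C_{\Phi_d}$ for $s=1,\ldots,N$ and $\sup_J$ of the $x^i$.

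There is no real obstacle. The only point needing care is to take the constant uniform over all $s\le N$ in Lemma~\ref{lem:wedge-curve-en}, which is immediate because only finitely many values of $s$ occur.
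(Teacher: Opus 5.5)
Your proof is correct and is essentially the paper's proof: you write each row as the curve vector plus an error vector of norm $O(\delta)$ supported in the $j=1$ slots, expand multilinearly, bound each term with $m$ error vectors by $\delta^m$ times Lemma~\ref{lem:wedge-curve-en} applied with $s=N-m$, and close with the identity $mN+(N-m)(N-m-1)/2=N(N-1)/2+m(m+1)/2$ and $\rho\le1$. One small remark: your sentence calling the one-error-vector bound ``not enough by itself'' is misleading, since your own computation shows that bound already suffices; this does not affect the validity of the argument.
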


\begin{proof}
Set
\[
E_k:=\Psi_d(x_k,y_k)-\Psi_d(x_k,f(x_k)).
\]
The coordinates of $E_k$ corresponding to $j=0$ vanish, whereas those
corresponding to $j=1$ are
\[
x_k^i\bigl(y_k-f(x_k)\bigr),
\qquad0\leq i\leq d.
\]
Since $x_k\in J$, there is a constant $C_0=C_0(d,J)$ such that
\begin{equation}\label{eq:perturbation-vector-en}
\norm{E_k}\leq C_0\delta
\qquad(1\leq k\leq N).
\end{equation}
We have
\[
\Psi_d(x_k,y_k)=\Phi_d(x_k)+E_k.
\]
Expanding the exterior product of these $N$ vectors by multilinearity,
consider a term containing exactly $r$ error vectors. If $r<N$, the
exterior norm inequality, \eqref{eq:perturbation-vector-en}, and
Lemma~\ref{lem:wedge-curve-en}, applied to the remaining $N-r$ curve
vectors, bound this term by
\[
C_r\delta^r\rho^{(N-r)(N-r-1)/2}.
\]
For $r=N$, the same bound holds with the standard convention
$\bigwedge^0\R^N=\R$ and $\norm{1}=1$. Absorbing the finitely many
combinatorial factors, we obtain
\begin{equation}\label{eq:perturbed-general-en}
\abs{\det\bigl(\Psi_d(x_k,y_k)\bigr)_{1\leq k\leq N}}
\leq
C\sum_{r=0}^N
\delta^r\rho^{(N-r)(N-r-1)/2}.
\end{equation}
By \eqref{eq:vertical-error-en}, the summand of index $r$ is at most
\[
\rho^{rN+(N-r)(N-r-1)/2}.
\]
The exact identity
\begin{equation}\label{eq:exponent-identity-en}
rN+\frac{(N-r)(N-r-1)}{2}
=
\frac{N(N-1)}{2}+\frac{r(r+1)}{2}
\end{equation}
shows that, since $0<\rho\leq1$, every summand is at most
$\rho^{N(N-1)/2}$. Summing over $0\leq r\leq N$ proves
\eqref{eq:perturbed-simple-en}.
\end{proof}

\subsection{The arithmetic lower bound}

The preceding estimate is purely analytic. Rationality supplies the
complementary fact that a nonzero target-linear determinant cannot be
too small.

\begin{lemma}\label{lem:det-lower-en}
Fix $d\geq0$ and put $N=2(d+1)$. Let
\[
x_k=\frac{p_k}{q_k},
\qquad
y_k=\frac{a_k}{b_k},
\qquad1\leq k\leq N,
\]
where $p_k,a_k\in\Z$ and $q_k,b_k\in\Z_{>0}$. The fractions need not be
reduced. Define
\[
\Delta
:=
\det\bigl(x_k^iy_k^j\bigr)_{
\substack{1\leq k\leq N\\0\leq i\leq d,\ j\in\{0,1\}}}.
\]
If $\Delta\neq0$, then
\begin{equation}\label{eq:det-lower-product-en}
\abs{\Delta}\geq\prod_{k=1}^Nq_k^{-d}b_k^{-1}.
\end{equation}
In particular, if $q_k<2Q$ and $b_k<2B$ for every $k$, then
\begin{equation}\label{eq:det-lower-dyadic-en}
\abs{\Delta}\geq(2Q)^{-dN}(2B)^{-N}.
\end{equation}
\end{lemma}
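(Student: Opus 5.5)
The plan is to clear denominators row by row and use the integrality of the resulting determinant. Row $k$ of the matrix is
\[
\bigl(x_k^iy_k^j\bigr)_{0\leq i\leq d,\ j\in\{0,1\}}
=\Bigl(\frac{p_k^i}{q_k^i}\cdot\frac{a_k^j}{b_k^j}\Bigr)_{i,j}.
\]
Multiplying this row by $D_k:=q_k^d b_k$ gives the entries
\[
p_k^i q_k^{d-i}\,a_k^j b_k^{1-j},
\]
which are integers because $0\leq i\leq d$ and $j\in\{0,1\}$. The fractions need not be reduced, since only the representations $p_k/q_k$ and $a_k/b_k$ are used.

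By multilinearity of the determinant in its rows,
\[
\Bigl(\prod_{k=1}^N D_k\Bigr)\Delta=\det\bigl(p_k^iq_k^{d-i}a_k^jb_k^{1-j}\bigr)\in\Z.
\]
The prefactor $\prod_k D_k$ is positive, so $\Delta\neq0$ makes this integer nonzero, and hence its absolute value is at least $1$. Dividing by $\prod_k D_k$ gives \eqref{eq:det-lower-product-en}.

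For \eqref{eq:det-lower-dyadic-en}, note that $0<q_k<2Q$ and $0<b_k<2B$ give $q_k^d b_k<(2Q)^d(2B)$ for each $k$ (with $q_k^0=1$ when $d=0$). Taking the product over the $N$ rows gives $\prod_k D_k\leq(2Q)^{dN}(2B)^N$, and inserting this into the product bound yields the dyadic form.

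There is no genuine obstacle here. The only point needing care is that the row scaling by $q_k^d b_k$, rather than by the larger column-dependent powers, already suffices to make every entry integral.
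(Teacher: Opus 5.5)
Your proposal is correct and follows the paper's proof essentially verbatim: scale row $k$ by $q_k^d b_k$ to get an integer matrix whose determinant is the nonzero integer $\bigl(\prod_k q_k^d b_k\bigr)\Delta$, then bound $q_k^d b_k<(2Q)^d(2B)$ row by row for the dyadic form.
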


\begin{proof}
Multiply the $k$th row of the matrix defining $\Delta$ by $q_k^db_k$.
The entry corresponding to $(i,j)$ becomes
\[
q_k^db_kx_k^iy_k^j
=
\begin{cases}
 p_k^iq_k^{d-i}b_k,&j=0,\\
 p_k^iq_k^{d-i}a_k,&j=1,
\end{cases}
\]
and is therefore an integer. The determinant of the resulting integer
matrix is
\[
\left(\prod_{k=1}^Nq_k^db_k\right)\Delta.
\]
If $\Delta\neq0$, this is a nonzero integer and hence has absolute
value at least one, proving \eqref{eq:det-lower-product-en}. The final assertion follows from $q_k<2Q$ and $b_k<2B$.
\end{proof}

The next section combines the analytic upper bound
\eqref{eq:perturbed-simple-en} with the arithmetic lower bound
\eqref{eq:det-lower-dyadic-en}. The two estimates depend separately on the source and target heights
and are balanced by adapting the source degree to their relative size.

\section{Two-height counting on analytic graphs}
\label{sec:scarcity}

We now establish the main counting estimate for the nonrational
real-analytic function $f:U\to\R$ fixed in
Section~\ref{sec:wronskians}. We count rational source points
$p/q$, with $q\asymp Q$, whose images admit exceptionally good
rational approximations, keeping the source and target denominator
scales separate. The numerical parameters below are chosen to
ensure the strict exponent inequalities needed in the proof;
we make no attempt to optimize them.

We fix
\begin{equation}\label{eq:fixed-parameters-en}
\gamma=10,
\qquad
\tau=100,
\qquad
v=\frac{2}{\gamma}=\frac15,
\qquad
\tau-3=97.
\end{equation}
The target degree is one throughout this section. The parameter
$\gamma$ controls the source degree, $\tau$ is the approximation
exponent that will eventually be excluded for the image, and $v$
separates the elementary and determinant ranges of target
denominators.

For every integer $A\geq3$, define
\begin{equation}\label{eq:D-A-en}
D(A)
:=
\max\set{2,\left\lceil\frac{\gamma A}{\tau-3}\right\rceil}
=
\max\set{2,\left\lceil\frac{10A}{97}\right\rceil},
\end{equation}
and, for $Q\geq2$, put
\begin{equation}\label{eq:T-Q-A-en}
T(Q,A):=Q^{A/(\tau-3)}=Q^{A/97}.
\end{equation}
The quantity $A$ measures the required accuracy in the source
variable, whereas $T(Q,A)$ is the largest target height treated at
source scale $Q$. The exponent $\tau-3$ is chosen for the subsequent fusion: for fixed $A$,
$T(Q,A)^{2-\tau}=o(Q^{-A})$ as $Q\to\infty$.
Thus the contribution from the next lower target cutoff can
be made small relative to the next source interval.

Let $I_*\Subset U$ be a nondegenerate compact interval on which
\begin{equation}\label{eq:derivative-bounds-en}
0<m_*\leq\abs{f'(x)}\leq M_*
\qquad(x\in I_*).
\end{equation}
Since $f'$ is continuous and does not vanish on $I_*$, it has constant
sign there. Hence $f|_{I_*}$ is strictly monotone, and the mean-value
theorem gives
\begin{equation}\label{eq:inverse-lipschitz-en}
\abs{x_2-x_1}
\leq
m_*^{-1}\abs{f(x_2)-f(x_1)}
\qquad(x_1,x_2\in I_*).
\end{equation}

For a nondegenerate compact interval $J\subseteq I_*$, integers
$A\geq3$ and $H\geq2$, and an integer $Q\geq2$, define the set of \textit{resonant source centers}
\begin{equation}\label{eq:dangerous-set-en}
\begin{split}
\mathcal D(J;Q,A,H)
:=
\Bigl\{\frac pq\in J:\;&
\gcd(p,q)=1,\quad Q\leq q<2Q,\\
&\exists\,a\in\Z,\ b\in\Z_{>0}
\text{ with }H\leq b<T(Q,A),\\
&\abs{f(p/q)-a/b}
\leq2b^{-\tau}+4M_*Q^{-A}
\Bigr\}.
\end{split}
\end{equation}

The source fractions are reduced, whereas the target fractions need not be.
For $p/q\in\mathcal D(J;Q,A,H)$, any pair $(a,b)$ satisfying the
conditions in \eqref{eq:dangerous-set-en} will be called a \emph{witness}
for $p/q$.
The term $Q^{-A}$ is a stability margin which will allow avoidance at a
rational center to persist on the next source interval.

\begin{proposition}[Two-height counting on analytic graphs]
\label{prop:scarcity-en}
There exists a constant $C_{\mathrm{low}}>0$, depending only on
$f$, $I_*$, $m_*$, $M_*$ and the fixed parameters in
\eqref{eq:fixed-parameters-en}, with the following property. Let
$J\subseteq I_*$ be a nondegenerate compact interval and let $A\geq3$
be an integer. Assume that
\begin{equation}\label{eq:wronskians-on-J-en}
W_d(x)\neq0
\qquad(x\in J,\ 2\leq d\leq D(A)).
\end{equation}
Then there exist constants
\[
C(J,A)>0
\qquad\text{and}\qquad
Q_0(J,A)\in\Z_{>0}
\]
such that, for every integer $H\geq2$ and every integer
$Q\geq Q_0(J,A)$,
\begin{equation}\label{eq:scarcity-bound-en}
\#\mathcal D(J;Q,A,H)
\leq
C_{\mathrm{low}}Q^2H^{-98}
+
C(J,A)Q^{17/10}.
\end{equation}
In particular, the constants $C(J,A)$ and $Q_0(J,A)$ are uniform in
$H$.
\end{proposition}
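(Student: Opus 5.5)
The plan is to split the witnesses by target height into a low range $H\leq b<Q^{v}$ (with $v=1/5$) and a high range $Q^{v}\leq b<T(Q,A)$. The high range is then cut into dyadic blocks $B\leq b<2B$, with $B=Q^{u}$ and $v\le u\le A/97$. Every resonant center has a witness in at least one of these ranges, so it suffices to bound the centers with a low-range witness by $C_{\mathrm{low}}Q^2H^{-98}+O(Q^{17/10})$, and the centers with a witness in each high block by $O_{J,A}(Q^{17/10-\eta})$ for some $\eta>0$. There are $O(\log Q)$ blocks, so a small power saving absorbs the logarithm.

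\emph{Low range.} Fix $b$ in $[H,Q^{v})$ and $a$, and suppose $a/b$ is close to $f(I_*)$. By \eqref{eq:inverse-lipschitz-en}, the set of $x\in I_*$ with $|f(x)-a/b|\le 2b^{-\tau}+4M_*Q^{-A}$ is an interval of length at most $2m_*^{-1}(2b^{-\tau}+4M_*Q^{-A})$. The number of admissible $a$ is $\ll b$, with a constant depending on $|f(I_*)|$. Applying \Cref{lem:rational-union} to the union $E$ of these intervals gives
\[
\#\le 4Q^2|E|+\#\{(a,b)\}\ll Q^2\sum_{b\ge H} b\,(b^{-\tau}+Q^{-A})+Q^{2v}.
\]
The first sum is $\ll H^{2-\tau}=H^{-98}$, with a constant depending only on $f,I_*,m_*,M_*$. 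This gives $C_{\mathrm{low}}$. The remaining terms are $Q^{2+2v-A}+Q^{2/5}=O(Q^{17/10})$ because $A\ge3$. The bound is uniform in $H$ and in $J$, since $|E|$ does not involve $J$.

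\emph{High range: determinant step.} Fix a block $B=Q^{u}$ and choose the source degree $d=d(u)\in[2,D(A)]$, roughly $d\approx\gamma u$. Partition $J$ into cells of length $\rho=Q^{-\kappa}$. For a center $p/q$ with witness $(a,b)$ we have $|a/b-f(p/q)|\le 2B^{-\tau}+4M_*Q^{-A}=:\delta$. We want $\delta\le\rho^N$ (with $N=2(d+1)$), and also
\[
C^{\mathrm{pert}}_{d,J}\rho^{N(N-1)/2}<(2Q)^{-dN}(2B)^{-N}.
\]
The second condition holds when $\kappa(N-1)/2>d+u$, i.e.\ $\kappa>2(d+u)/(2d+1)$; we take $\kappa$ slightly above $1+u/d$. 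The first condition, $\delta\le\rho^N$, needs $\tau u\ge \kappa N$ and $A\ge\kappa N$. This is exactly why $d\approx\gamma u$ with $\tau=100$, and why $D(A)\approx 10A/97$ suffices: for $u\le A/97$ we get $\kappa N\approx 2d\approx 20u\le\tau u$, and $A$ dominates similarly. Once both conditions hold, \Cref{lem:perturbed-determinant-en} and \Cref{lem:det-lower-en} show that every $N\times N$ determinant formed from witness pairs in one cell vanishes. Hence all pairs $(p/q,a/b)$ in the cell lie on a common nonzero $P=A+BY$ with $\deg\le d$ and $\|P\|_2=1$.

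\emph{High range: counting within a cell.} On such a cell, $|P(p/q,f(p/q))|\le|B(p/q)|\,\delta\ll\delta$. By \Cref{cor:polynomial-sublevel-en}, which applies by \eqref{eq:wronskians-on-J-en}, the centers lie in a set of at most $R_{d,J}$ intervals of total length $\ll\delta^{1/(N-1)}$. \Cref{lem:rational-union} then gives $\ll Q^2\delta^{1/(N-1)}+R_{d,J}$ centers per cell. Summing over the $\rho^{-1}=Q^{\kappa}$ cells gives
\[
\ll Q^{2+\kappa}\delta^{1/(N-1)}+Q^{\kappa}.
\]
With $\kappa\approx1+u/d\approx1+1/\gamma=1.1$, the component term is $Q^{1.1+o(1)}$. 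The measure term is small because $\delta^{1/(N-1)}\approx Q^{-\tau u/(2\gamma u)}=Q^{-5}$. Both are therefore comfortably below $Q^{17/10}$.

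The main obstacle is the exponent bookkeeping. The value $\kappa$ has to satisfy the determinant inequality and the condition $\delta\le\rho^N$ simultaneously, uniformly for $u\in[1/5,A/97]$ and with integer $d\le D(A)$. Rounding $d$ to an integer distorts $\kappa$ most at small $u$, which is why the cutoff $v=2/\gamma$ forces $d\ge2$. I would first check the two worst cases, $u=v$ and $u=A/97$, and pick $\kappa=(1+u/d)(1+\epsilon)$ explicitly. Finally, I would choose $Q_0(J,A)$ large enough to absorb the constants $C^{\mathrm{pert}}_{d,J}$ and $\eps_{d,J}$ over the finitely many $d\le D(A)$.
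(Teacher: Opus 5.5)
Your proposal is correct and follows essentially the same route as the paper. Elementary Farey counting handles $b<Q^{1/5}$. Above that cutoff come dyadic blocks $B=Q^u$ with adaptive degree $d\approx 10u$ and cells of length $Q^{-\kappa}$. Lemma~\ref{lem:perturbed-determinant-en} against Lemma~\ref{lem:det-lower-en} then forces a common relation $A+BY$, and the uniform sublevel bound plus Lemma~\ref{lem:rational-union} count centers in each cell. The differences are only numerical: the paper takes the more generous $\kappa=3(d+u)/(N-1)\le 33/20$ and arranges $O(1)$ centers per cell, whereas your $\kappa\approx(1+u/d)(1+\epsilon)\le 1.1(1+\epsilon)$ still gives $s-\kappa N\geq(49-48\epsilon)u$ and lets you sum the measure term over cells (at small $u$ one only has $s/(N-1)\ge 97/35$ rather than $\approx 5$, which still suffices).
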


\begin{proof}
Fix $J$ and $A$ as in the statement. Every lower bound imposed on $Q$
below will depend at most on $J$ and $A$, and will be independent of
$H$ and of the target block.

If $T(Q,A)\leq H$, then
$\mathcal D(J;Q,A,H)=\varnothing$. Assume henceforth that
$T(Q,A)>H$. For every integer $k\geq0$ such that
\[
B_k:=2^kH<T(Q,A),
\]
put
\begin{equation}\label{eq:dyadic-blocks-en}
\mathcal B_k
:=
[B_k,\min\{2B_k,T(Q,A)\})\cap\Z_{>0}.
\end{equation}
These sets form a disjoint partition of
$[H,T(Q,A))\cap\Z_{>0}$. Let $\mathcal D_k$ be the set of source centers
admitting at least one
witness $(a,b)$ with
$b\in\mathcal B_k$. Then
\begin{equation}\label{eq:D-covered-by-blocks-en}
\mathcal D(J;Q,A,H)\subseteq\bigcup_k\mathcal D_k,
\qquad
\#\mathcal D(J;Q,A,H)\leq\sum_k\#\mathcal D_k.
\end{equation}
A center may occur in more than one $\mathcal D_k$, which only produces
an admissible overcount.

A block with left endpoint $B=B_k$ is called \emph{small} if
$B<Q^v$ and \emph{large} if $B\geq Q^v$. A block beginning below
$Q^v$ and crossing that threshold is assigned to the small range; the
estimates there use only $B\leq b<2B$.

\subsubsection*{Small target heights}

Set
\begin{equation}\label{eq:F-star-C-star-en}
F_*:=\max_{x\in I_*}\abs{f(x)},
\qquad
C_*:=F_*+2^{1-\tau}+\frac{M_*}{2}.
\end{equation}
Suppose that $x\in I_*$, $b\geq2$, $A\geq3$, $Q\geq2$, and
\[
\abs{f(x)-a/b}\leq2b^{-\tau}+4M_*Q^{-A}.
\]
Since $Q^{-A}\leq2^{-3}$,
\[
\frac{|a|}{b}
\leq
F_*+2b^{-\tau}+4M_*Q^{-A}
\leq C_*.
\]
Thus $|a|\leq C_*b$. For $B\leq b<2B$, there are therefore
$O(B^2)$ pairs $(a,b)$ that can occur as witnesses, with an implied constant depending only on $C_*$.

For each such pair, let
\[
E_{a,b}
:=
\set{x\in J:
\abs{f(x)-a/b}\leq2b^{-\tau}+4M_*Q^{-A}}.
\]
Because $f$ is strictly monotone on $I_*$, the set $E_{a,b}$ is empty,
a singleton, or an interval. If $x_1<x_2$ belong to $E_{a,b}$, then
\eqref{eq:inverse-lipschitz-en} gives
\[
m_*(x_2-x_1)
\leq
\abs{f(x_2)-f(x_1)}
\leq
2\bigl(2b^{-\tau}+4M_*Q^{-A}\bigr).
\]
Consequently,
\begin{equation}\label{eq:Eab-length-en}
|E_{a,b}|
\leq
\frac{2}{m_*}\bigl(2b^{-\tau}+4M_*Q^{-A}\bigr).
\end{equation}

Let $E_B$ be the union of the sets $E_{a,b}$ over one small target
block. It is a union of $O(B^2)$ intervals, and
\begin{equation}\label{eq:low-target-length-en}
|E_B|\ll B^{2-\tau}+B^2Q^{-A}.
\end{equation}
Every center in the corresponding $\mathcal D_k$ belongs to $E_B$.
Applying Lemma~\ref{lem:rational-union} gives
\begin{align}
\#\mathcal D_k
&\ll Q^2|E_B|+B^2\notag\\
&\ll
Q^2B^{2-\tau}
+Q^{2-A}B^2
+B^2.
\label{eq:low-block-count-en}
\end{align}

We sum over the dyadic left endpoints $B=2^kH<Q^v$. Since
$2-\tau<0$,
\[
\sum_{2^kH<Q^v}(2^kH)^{2-\tau}
\ll H^{2-\tau},
\]
whereas
\[
\sum_{2^kH<Q^v}(2^kH)^2\ll Q^{2v}.
\]
It follows that
\begin{equation}\label{eq:low-total-en}
\sum_{B_k<Q^v}\#\mathcal D_k
\leq
C_{\mathrm{low}}Q^2H^{2-\tau}
+O\bigl(Q^{2-A+2v}+Q^{2v}\bigr).
\end{equation}
The coefficient $C_{\mathrm{low}}$ depends only on
$f$, $I_*$, $m_*$, $M_*$ and the fixed parameters in
\eqref{eq:fixed-parameters-en}. Since $A\geq3$ and $v=1/5$,
\[
2-A+2v\leq-\frac35,
\qquad
2v=\frac25.
\]
Thus
\begin{equation}\label{eq:low-final-en}
\sum_{B_k<Q^v}\#\mathcal D_k
\leq
C_{\mathrm{low}}Q^2H^{2-\tau}+O(Q^{2/5}).
\end{equation}

\subsubsection*{Large target heights}

If there are no blocks with $B_k\geq Q^v$, there is nothing to prove. This is automatically the case when $A\leq 19$, since then
\[
T(Q,A)=Q^{A/97}<Q^{1/5}=Q^v.
\]
Hence, in the large-target range one necessarily has $A\geq 20$. Fix one such block, write $B=B_k$, and define
\begin{equation}\label{eq:u-definition-en}
u:=\frac{\log B}{\log Q},
\qquad B=Q^u.
\end{equation}
Since $Q^v\leq B<T(Q,A)$,
\begin{equation}\label{eq:u-range-en}
\frac15=v\leq u<\frac{A}{97}.
\end{equation}
We shall show that each large target block contributes 
$O_{J,A}(Q^{33/20})$ resonant centers: we partition $J$ into
$O_J(Q^\kappa)$ short source cells and prove that each cell contains
only $O_{J,A}(1)$ resonant centers.
Set
\begin{equation}\label{eq:d-N-kappa-en}
d:=\lceil\gamma u\rceil=\lceil10u\rceil,
\qquad
N:=2(d+1),
\qquad
\kappa:=\frac{3(d+u)}{N-1},
\end{equation}
and put
\begin{equation}\label{eq:rho-definition-en}
\rho:=Q^{-\kappa}.
\end{equation}
The degree $d\asymp u$ adapts the source degree to the relative target
height $B=Q^u$, while the choice of $\kappa$ gives
\[
\rho^{N(N-1)/2}=Q^{-\frac32N(d+u)},
\]
to be compared below with the arithmetic scale $Q^{-N(d+u)}$.
Because $10u\geq2$, we have $d\geq2$. Moreover,
$10u<10A/97$, and hence $d\leq D(A)$. Thus
\eqref{eq:wronskians-on-J-en} applies to the chosen degree.

Partition $J$ into half-open cells of length at most $\rho$, taking the
final cell to be closed at its right endpoint. Every point of $J$ then
belongs to exactly one cell, and the number of cells is at most
\begin{equation}\label{eq:number-cells-en}
1+|J|\rho^{-1}
=1+|J|Q^\kappa
\leq(1+|J|)Q^\kappa.
\end{equation}

Fix a cell $C$. For every center $x=p/q\in\mathcal D_k\cap C$, choose once and for all
a witness $(a,b)$ with $b\in\mathcal B_k$, and put $y=a/b$. Define
\begin{equation}\label{eq:s-definition-en}
s:=\min\{u\tau,A\}.
\end{equation}
Since $b\geq B=Q^u$,
\begin{align}
\abs{y-f(x)}
&\leq2B^{-\tau}+4M_*Q^{-A}\notag\\
&\leq K_0Q^{-s},
\qquad K_0:=2+4M_*.
\label{eq:vertical-Q-s-en}
\end{align}

We record uniform exponent bounds. From $d=\lceil10u\rceil$ and
$u\geq1/5$,
\begin{equation}\label{eq:basic-d-bounds-en}
10u\leq d\leq10u+1\leq15u,
\qquad
d+u\leq16u.
\end{equation}
Since $u\leq d/10$ and $N-1=2d+1$,
\begin{equation}\label{eq:kappa-bound-en}
\kappa
=\frac{3(d+u)}{2d+1}
\leq\frac{3(11d/10)}{2d}
=\frac{33}{20}.
\end{equation}
The upper bound in \eqref{eq:u-range-en} gives $97u<A$, while
$\tau u=100u$. Therefore
\begin{equation}\label{eq:s-lower-en}
s\geq97u.
\end{equation}
Also, using $N/(N-1)\leq2$ and
\eqref{eq:basic-d-bounds-en},
\begin{equation}\label{eq:kappaN-upper-en}
\kappa N
=3(d+u)\frac{N}{N-1}
\leq6(d+u)
\leq96u.
\end{equation}
Consequently,
\begin{equation}\label{eq:s-kappa-gap-en}
s-\kappa N\geq u\geq\frac{1}{5}.
\end{equation}
Finally,
\begin{equation}\label{eq:N-upper-en}
N-1=2d+1\leq30u+1\leq35u.
\end{equation}
Thus the three numerical inequalities driving the large-target
argument are
\[
\kappa\leq\frac{33}{20},\qquad
s-\kappa N\geq\frac15,\qquad
\frac{s}{N-1}\geq\frac{97}{35}>2.
\]
They control, respectively, the number of source cells, the
perturbation required in Lemma~\ref{lem:perturbed-determinant-en},
and the Farey count within each cell.

\emph{Determinant comparison.} For each integer $d$ with $2\leq d\leq D(A)$, let
$C_{d,J}^{\mathrm{pert}}$ be the constant in
Lemma~\ref{lem:perturbed-determinant-en}, and set
\begin{equation}\label{eq:uniform-det-constant-en}
K_{\det}(J,A)
:=
\max_{2\leq d\leq D(A)}C_{d,J}^{\mathrm{pert}}<\infty.
\end{equation}
Since $D(A)$ is fixed once $A$ is fixed, this maximum is taken over
a finite set of constants determined before the choice of $Q$ and of
the target block. In particular, $K_{\det}(J,A)$ is independent of
$Q$, $u$, $B$, and $H$.

By \eqref{eq:vertical-Q-s-en} and
\eqref{eq:s-kappa-gap-en},
\[
K_0Q^{-s}\leq K_0Q^{-\kappa N-1/5}.
\]
Hence $Q\geq K_0^5$ implies
\[
K_0Q^{-s}\leq Q^{-\kappa N}=\rho^N.
\]
This restriction is simultaneous for all large target blocks and is
independent of $u$, $B$, and $H$.

Suppose first that $C$ contains at least $N$ resonant centers. Choose
$N$ distinct centers and their fixed witnessing pairs, say
$(x_r,y_r)$, $1\leq r\leq N$, and define
\[
\Delta
:=
\det\bigl(x_r^iy_r^j\bigr)_{
\substack{1\leq r\leq N\\0\leq i\leq d,\ j\in\{0,1\}}}.
\]
All the $x_r$ lie in the closure of a cell of length at most $\rho$.
Lemma~\ref{lem:perturbed-determinant-en} and the definition of
$\kappa$ give
\begin{equation}\label{eq:det-upper-high-en}
|\Delta|
\leq
K_{\det}(J,A)\rho^{N(N-1)/2}
=
K_{\det}(J,A)Q^{-\frac32N(d+u)}.
\end{equation}
If $\Delta\neq0$, then $q_r<2Q$ and $b_r<2B$, so
Lemma~\ref{lem:det-lower-en} gives
\begin{equation}\label{eq:det-lower-high-en}
|\Delta|
\geq
(2Q)^{-dN}(2B)^{-N}
=
2^{-N(d+1)}Q^{-N(d+u)}.
\end{equation}
The two bounds are incompatible whenever
\begin{equation}\label{eq:det-incompatibility-en}
Q^{N(d+u)/2}
>
K_{\det}(J,A)2^{N(d+1)}.
\end{equation}
This threshold is uniform in the target block. Indeed,
$d\geq2$, $u\geq1/5$, and $N\geq6$, so
\[
\frac{N(d+u)}{2}
\geq\frac{6(2+1/5)}{2}
=\frac{33}{5}>6.
\]
Since $N(d+1)=2(d+1)^2$, it is enough to impose
\[
Q^6
>
K_{\det}(J,A)
\max_{2\leq d\leq D(A)}2^{2(d+1)^2}.
\]
This condition depends only on $J$ and $A$. Thus, for all sufficiently
large $Q$, the determinant associated with every choice of $N$
distinct resonant centers in every cell is zero.

Let $M_C$ be the number of resonant centers in $C$ for the present
target block. If $M_C<N$, then
\[
M_C<N=2(d+1)\leq2(D(A)+1),
\]
so $M_C=O_A(1)$. Assume that $M_C\ge N$ and enumerate the centers together with their
chosen witnesses as
\[
(x_r,y_r),\qquad 1\le r\le M_C,
\]
where $x_r=p_r/q_r$ and $y_r=a_r/b_r$. Consider the $M_C\times N$ evaluation matrix
\begin{equation}\label{eq:evaluation-matrix-en}
\mathcal M_C
:=
\bigl(x_r^iy_r^j\bigr)_{
\substack{1\leq r\leq M_C\\0\leq i\leq d,\ j\in\{0,1\}}}.
\end{equation}
Since the preceding choice of $N$ distinct centers was arbitrary, every
$N\times N$ minor of $\mathcal M_C$ vanishes. Hence
$\rank\mathcal M_C\leq N-1$, so the right kernel of $\mathcal M_C$ in
$\R^N$ is nontrivial. Choose a unit vector $(c_{i0},c_{i1})$ in this
right kernel and define
\begin{equation}\label{eq:auxiliary-polynomial-en}
P_C(X,Y)
:=
A_C(X)+B_C(X)Y
:=
\sum_{i=0}^dc_{i0}X^i
+
\sum_{i=0}^dc_{i1}X^iY,
\qquad
\norm{P_C}_2=1.
\end{equation}
Then
\begin{equation}\label{eq:P-vanishes-pairs-en}
P_C(x_r,y_r)=0
\qquad(1\leq r\leq M_C).
\end{equation}

\emph{Counting in one cell.} Put
\[
X_*:=\max\set{1,\max_{x\in J}|x|}
\]
and
\begin{equation}\label{eq:L-A-en}
L_{J,A}
:=
\max_{2\leq d\leq D(A)}
\left(\sum_{i=0}^dX_*^{2i}\right)^{1/2}<\infty.
\end{equation}
The coefficient normalization in
\eqref{eq:auxiliary-polynomial-en} and Cauchy--Schwarz give
$|B_C(x)|\leq L_{J,A}$ for $x\in J$. Since $P_C$ is linear in $Y$,
\eqref{eq:P-vanishes-pairs-en} and
\eqref{eq:vertical-Q-s-en} imply
\begin{equation}\label{eq:P-small-graph-en}
\abs{P_C(x_r,f(x_r))}
=
|B_C(x_r)|\abs{f(x_r)-y_r}
\leq
L_{J,A}K_0Q^{-s}.
\end{equation}

For $2\leq d\leq D(A)$, let
$C_{d,J}$, $R_{d,J}$, and $\eps_{d,J}$ be supplied by
Corollary~\ref{cor:polynomial-sublevel-en}, and set
\begin{equation}\label{eq:uniform-sublevel-data-en}
\begin{aligned}
C_{J,A}^{\mathrm{sub}}
&:=\max_{2\leq d\leq D(A)}C_{d,J},\\
R_{J,A}^{\mathrm{sub}}
&:=\max_{2\leq d\leq D(A)}R_{d,J},\\
\eps_{J,A}^{\mathrm{sub}}
&:=\min_{2\leq d\leq D(A)}\eps_{d,J}>0.
\end{aligned}
\end{equation}
These extrema are finite. Since $s\geq97/5$,
\[
L_{J,A}K_0Q^{-s}\leq L_{J,A}K_0Q^{-97/5},
\]
so, after increasing the lower threshold for $Q$ depending only on
$J$ and $A$, we have
\begin{equation}\label{eq:sublevel-smallness-en}
L_{J,A}K_0Q^{-s}<\eps_{J,A}^{\mathrm{sub}}.
\end{equation}
This threshold is again uniform over all large target blocks.
Define
\[
S_C
:=
\set{x\in J:
\abs{P_C(x,f(x))}\leq L_{J,A}K_0Q^{-s}}
\cap\overline C.
\]
Intersecting the sublevel set from
Corollary~\ref{cor:polynomial-sublevel-en} with the interval
$\overline C$ does not increase its number of connected components.
Every resonant center assigned to $C$ belongs to $S_C$. By
Corollary~\ref{cor:polynomial-sublevel-en},
\begin{equation}\label{eq:sublevel-length-high-en}
|S_C|\ll_{J,A}Q^{-s/(N-1)},
\end{equation}
and $S_C$ is a union of at most
$R_{J,A}^{\mathrm{sub}}$ intervals. From
\eqref{eq:s-lower-en} and \eqref{eq:N-upper-en},
\begin{equation}\label{eq:s-over-N-en}
\frac{s}{N-1}\geq\frac{97}{35}>2.
\end{equation}
The resonant centers are distinct reduced fractions with denominators
in $[Q,2Q)$. By \eqref{eq:sublevel-length-high-en} and
\eqref{eq:s-over-N-en},
\[
Q^2|S_C|
\ll_{J,A}
Q^{2-s/(N-1)}
\leq Q^{2-97/35}
=Q^{-27/35}.
\]
Lemma~\ref{lem:rational-union} therefore gives
\begin{align}
M_C
&\leq4Q^2|S_C|+R_{J,A}^{\mathrm{sub}}\notag\\
&\ll_{J,A}Q^{-27/35}+1
\ll_{J,A}1.
\label{eq:centers-per-cell-en}
\end{align}
Together with the case $M_C<N$, this bound holds uniformly for every
cell and every large target block.

\emph{Summation over cells and target blocks.} By \eqref{eq:number-cells-en}, there are $O_J(Q^\kappa)$ source
cells, and \eqref{eq:centers-per-cell-en} gives $O_{J,A}(1)$ resonant
centers in each. Hence, by \eqref{eq:kappa-bound-en}, one large target
block contributes
\begin{equation}\label{eq:high-one-block-en}
O_{J,A}(Q^\kappa)=O_{J,A}(Q^{33/20}).
\end{equation}
The total number of dyadic target blocks is at most
\[
1+\log_2\frac{T(Q,A)}{H}
\leq
1+\frac{A}{97}\log_2Q
=O_A(\log Q).
\]
Since $\log Q\ll Q^{1/20}$ for $Q\geq2$, summing
\eqref{eq:high-one-block-en} gives
\begin{equation}\label{eq:high-all-blocks-en}
\sum_{B_k\geq Q^v}\#\mathcal D_k
=O_{J,A}(Q^{33/20}\log Q)
=O_{J,A}(Q^{17/10}).
\end{equation}

Combining \eqref{eq:D-covered-by-blocks-en},
\eqref{eq:low-final-en}, and \eqref{eq:high-all-blocks-en}, we obtain
\eqref{eq:scarcity-bound-en}. We finally record the uniformity. For
fixed $J$ and $A$, only the finite range $2\leq d\leq D(A)$ occurs,
and the bounds $u\geq1/5$ and $s-\kappa N\geq1/5$ make all lower
restrictions on $Q$ in the large-height argument uniform in the target
block. Thus none depends on $H$, $u$, $B$, the cell, or the witnessing
pairs. Taking their maximum defines $Q_0(J,A)$, and enlarging the final
implied constant defines $C(J,A)$.
\end{proof}


\section{From two-height counting to rigidity}
\label{sec:fusion}

\subsection{A safe center}

The counting estimate becomes effective once its quadratic main term
is smaller than the supply of reduced source fractions. For the
deduction below, the precise exponent $17/10$ is not essential: what
matters is that the second term in Proposition~5.1 is $o(Q^2)$ as
$Q\to\infty$, uniformly in the lower target cutoff $H$, while
$H^{2-\tau}\to0$ as $H\to\infty$. These two features produce safe
rational centers at every sufficiently large source scale.

\begin{lemma}[A safe center in one source block]
\label{lem:safe-center-en}
Assume the hypotheses and notation of
Proposition~\ref{prop:scarcity-en}. Let $H\geq2$ be an integer and
suppose in addition that
\begin{equation}\label{eq:tail-smallness-en}
C_{\mathrm{low}}H^{2-\tau}
\leq
\frac{c_{\mathrm F}}4|J|.
\end{equation}
Then, for every sufficiently large integer $Q$, there exists a reduced
fraction
\[
r=\frac pq\in J,
\qquad Q\leq q<2Q,
\]
such that
\begin{equation}\label{eq:safe-center-en}
\abs{f(r)-a/b}
>
2b^{-\tau}+4M_*Q^{-A}
\end{equation}
for every $a\in\Z$ and every integer $b$ satisfying
\[
H\leq b<T(Q,A).
\]
\end{lemma}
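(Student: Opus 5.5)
The plan is a pure counting comparison: the lower bound of Lemma~\ref{lem:farey-lower-en} against the upper bound of Proposition~\ref{prop:scarcity-en}. First I check that the statement is exactly the negation of membership in $\mathcal D(J;Q,A,H)$. A reduced fraction $r=p/q\in J$ with $Q\le q<2Q$ satisfies \eqref{eq:safe-center-en} for all $a\in\Z$ and all $H\le b<T(Q,A)$ precisely when $r\notin\mathcal D(J;Q,A,H)$. So it suffices to show that, for all large $Q$,
\[
\#\set{\frac pq\in J:\gcd(p,q)=1,\ Q\le q<2Q}>\#\mathcal D(J;Q,A,H).
\]

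For the lower bound, $J$ is a nondegenerate compact interval. Lemma~\ref{lem:farey-lower-en} therefore gives at least $c_{\mathrm F}|J|Q^2$ reduced fractions in the block as soon as $Q\ge Q_0(J)$.

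For the upper bound, since $\tau=100$ we have $H^{-98}=H^{2-\tau}$. Proposition~\ref{prop:scarcity-en} then gives, for $Q\ge Q_0(J,A)$,
\[
\#\mathcal D(J;Q,A,H)\le C_{\mathrm{low}}H^{2-\tau}Q^2+C(J,A)Q^{17/10}.
\]
The hypothesis \eqref{eq:tail-smallness-en} bounds the first term by $\tfrac{c_{\mathrm F}}4|J|Q^2$. Because $C(J,A)$ does not depend on $H$, the second term is below $\tfrac{c_{\mathrm F}}4|J|Q^2$ once $Q^{3/10}>4C(J,A)/(c_{\mathrm F}|J|)$.

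Combining the two bounds, for
\[
Q\ge\max\set{Q_0(J),\,Q_0(J,A),\,\bigl(4C(J,A)/(c_{\mathrm F}|J|)\bigr)^{10/3}}
\]
we get $\#\mathcal D\le\tfrac{c_{\mathrm F}}2|J|Q^2<c_{\mathrm F}|J|Q^2$. Hence some reduced fraction in the block lies outside $\mathcal D$, and that fraction is the required $r$.

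No step here is a real obstacle; the only point needing care is uniformity. The threshold on $Q$ depends only on $J$ and $A$, not on $H$, because $Q_0(J,A)$ and $C(J,A)$ are uniform in $H$ and the $H$-dependence enters only through \eqref{eq:tail-smallness-en}. This uniformity is what allows the lemma to be reused in the later nested construction.
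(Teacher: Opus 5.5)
Your proposal is correct and is essentially the paper's proof: both compare the lower bound $c_{\mathrm F}|J|Q^2$ from Lemma~\ref{lem:farey-lower-en} with the upper bound of Proposition~\ref{prop:scarcity-en}, in which \eqref{eq:tail-smallness-en} absorbs the first term and $17/10<2$ absorbs the second. Any reduced fraction outside $\mathcal D(J;Q,A,H)$ then satisfies \eqref{eq:safe-center-en} by definition; your explicit threshold on $Q$ and your remark that it does not depend on $H$ simply make the paper's ``sufficiently large $Q$'' concrete.
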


\begin{proof}
By Lemma~\ref{lem:farey-lower-en}, for every sufficiently large $Q$,
the interval $J$ contains at least
\[
c_{\mathrm F}|J|Q^2
\]
reduced fractions with denominator in $[Q,2Q)$. On the other hand,
Proposition~\ref{prop:scarcity-en} and
\eqref{eq:tail-smallness-en} give
\[
\#\mathcal D(J;Q,A,H)
\leq
\frac{c_{\mathrm F}}4|J|Q^2+C(J,A)Q^{17/10}.
\]
Since $17/10<2$ and $J,A$ are fixed, there exists
$Q_1(J,A)$ such that, for every $Q\geq Q_1(J,A)$,
\[
C(J,A)Q^{17/10}
\leq
\frac{c_{\mathrm F}}4|J|Q^2.
\]
Thus $\mathcal D(J;Q,A,H)$ does not exhaust the reduced fractions
$p/q\in J$ with $Q\leq q<2Q$. Hence any such fraction outside
$\mathcal D(J;Q,A,H)$ satisfies \eqref{eq:safe-center-en} by definition.
\end{proof}

\subsection{Fusion and proof of Theorem~\ref{thm:intro-local}}

Fix a nonempty open interval $V\subseteq U$. Since $f$ is not rational, it is nonconstant, so $f'\not\equiv0$. As $f'$ is real-analytic, its zeros are isolated. We may therefore choose a
nondegenerate compact interval
\begin{equation}\label{eq:I-star-choice-en}
I_*\Subset V
\end{equation}
on which $f'$ does not vanish. By continuity and compactness, there are
constants $m_*,M_*>0$ such that
\[
0<m_*\leq\abs{f'(x)}\leq M_*
\qquad(x\in I_*),
\]
as required in \eqref{eq:derivative-bounds-en}.

For $n\geq0$, put
\begin{equation}\label{eq:A-n-en}
A_n:=n+3
\end{equation}
and define
\begin{equation}\label{eq:Z-n-en}
\mathcal Z_n
:=
\bigcup_{2\leq d\leq D(A_n)}
\set{x\in I_*:W_d(x)=0}.
\end{equation}
Each $\mathcal Z_n$ is finite by
Corollary~\ref{cor:wronskian-finite-en}. Since $D(A)$ is nondecreasing,
\begin{equation}\label{eq:Z-n-nested-en}
\mathcal Z_n\subseteq\mathcal Z_{n+1}.
\end{equation}
Choose a nondegenerate compact interval
\begin{equation}\label{eq:I0-en}
I_0\subseteq\Int(I_*)\setminus\mathcal Z_0.
\end{equation}

For a nondegenerate compact interval $I=[\alpha,\beta]$, let
\begin{equation}\label{eq:middle-third-en}
J(I)
:=
\left[
\alpha+\frac{\beta-\alpha}{3},
\beta-\frac{\beta-\alpha}{3}
\right]
\end{equation}
be its middle third. Whenever $I_n$ is defined, write
$J_n:=J(I_n)$. Then
\begin{equation}\label{eq:middle-third-length-en}
|J_n|=\frac13|I_n|,
\qquad
\dist(J_n,\R\setminus I_n)=\frac13|I_n|.
\end{equation}

Choose an integer $T_0\geq2$ such that
\begin{equation}\label{eq:initial-tail-en}
C_{\mathrm{low}}T_0^{2-\tau}
\leq
\frac{c_{\mathrm F}}4|J_0|.
\end{equation}
This is possible because $2-\tau=-98<0$.

We construct nondegenerate compact intervals $I_n$, strictly
increasing positive integers $T_n$, and reduced fractions
$r_n=p_n/q_n$ satisfying the following properties whenever the objects
involved are defined:
\begin{enumerate}[label=\textup{(F\arabic*)},leftmargin=3em]
\item $I_{n+1}\subset\Int(I_n)$;
\item $I_n\cap\mathcal Z_n=\varnothing$;
\item $T_{n+1}>T_n$;
\item $q_n>2q_{n-1}$ for $n\geq1$;
\item for every $x\in I_{n+1}$,
\begin{equation}\label{eq:source-invariant-en}
0<\abs{x-r_n}<q_n^{-A_n};
\end{equation}
\item for every $x\in I_{n+1}$, every $a\in\Z$, and every integer $b$
with $T_n\leq b<T_{n+1}$,
\begin{equation}\label{eq:target-invariant-en}
\abs{f(x)-a/b}>b^{-\tau};
\end{equation}
\item
\begin{equation}\label{eq:tail-invariant-en}
C_{\mathrm{low}}T_n^{2-\tau}
\leq
\frac{c_{\mathrm F}}4|J_n|.
\end{equation}
\end{enumerate}
Conditions (F5) and (F6) impose source approximation and target
avoidance. Conditions (F2) and (F7) ensure the hypotheses needed
to choose the next safe center.

For clarity, we write \textup{(F1)}$_n$, \textup{(F3)}$_n$,
\textup{(F5)}$_n$, and \textup{(F6)}$_n$ for the corresponding
properties linking stage $n$ to stage $n+1$, while
\textup{(F2)}$_n$ and \textup{(F7)}$_n$ refer to the properties at
stage $n$ itself; similarly, \textup{(F4)}$_n$ is understood for
$n\geq1$.

The initial choices $I_0$ and $T_0$ satisfy
\textup{(F2)}$_0$ and \textup{(F7)}$_0$ by
\eqref{eq:I0-en} and \eqref{eq:initial-tail-en}. Suppose that $I_0,\ldots,I_n$, $T_0,\ldots,T_n$, and, when
$n\geq1$, $r_0,\ldots,r_{n-1}$ have been constructed and satisfy
all applicable induction hypotheses. We choose
$Q_n,r_n,T_{n+1},I_{n+1}$ in this order and verify the required
conditions at the next stage.

Since $J_n\subseteq I_n$ and
\textup{(F2)}$_n$ gives $I_n\cap\mathcal Z_n=\varnothing$,
\[
W_d(x)\neq0
\qquad(x\in J_n,\ 2\leq d\leq D(A_n)).
\]
Thus Proposition~\ref{prop:scarcity-en} applies on $J_n$ with
$A=A_n$, and \textup{(F7)}$_n$ is precisely the additional hypothesis
\eqref{eq:tail-smallness-en} of Lemma~\ref{lem:safe-center-en}, with
$H=T_n$.

Before choosing the source height, define
\begin{equation}\label{eq:Lambda-next-en}
\Lambda_{n+1}:=\#\mathcal Z_{n+1}+2.
\end{equation}
Notice that $\mathcal Z_{n+1}$, and hence $\Lambda_{n+1}$, is already
fixed at this point, independently of all objects to be chosen at
stage $n$.

Choose an integer $Q_n$ sufficiently large that the conclusion of
Lemma~\ref{lem:safe-center-en} holds and, simultaneously,
\begin{enumerate}[label=\textup{(Q\arabic*)},leftmargin=3em]
\item $Q_n>2q_{n-1}$ if $n\geq1$;
\item
\[
(2Q_n)^{-A_n}<\frac{|I_n|}{3};
\]
\item
\[
Q_n^{A_n/97}\geq2T_n+2;
\]
\item
\begin{equation}\label{eq:Q4-en}
3C_{\mathrm{low}}\Lambda_{n+1}
2^{A_n+\tau-2}Q_n^{-A_n/97}
\leq
\frac{c_{\mathrm F}}4.
\end{equation}
\end{enumerate}
Such a choice is possible. Indeed, at this stage $J_n$, $A_n$,
$T_n$, $\Lambda_{n+1}$, and, when $n\geq1$, $q_{n-1}$ are fixed, as
are all constants and thresholds in
Lemma~\ref{lem:safe-center-en}. Conditions
\textup{(Q1)}--\textup{(Q3)} impose only finitely many lower bounds on
$Q_n$, while the left-hand side of \eqref{eq:Q4-en} tends to zero as
$Q_n\to\infty$.

Choose a reduced safe fraction
\begin{equation}\label{eq:r-n-choice-en}
r_n=\frac{p_n}{q_n}\in J_n,
\qquad Q_n\leq q_n<2Q_n.
\end{equation}
Set
\begin{equation}\label{eq:R-T-next-en}
R_n:=(2Q_n)^{-A_n},
\qquad
T_{n+1}:=\left\lfloor Q_n^{A_n/97}\right\rfloor,
\end{equation}
and define
\begin{equation}\label{eq:K-next-en}
K_{n+1}:=(r_n-R_n,r_n+R_n).
\end{equation}
Condition \textup{(Q3)} gives $T_{n+1}\geq2T_n+2>T_n$, proving
\textup{(F3)}$_n$. Since $r_n\in J_n$ and
$\dist(J_n,\R\setminus I_n)=|I_n|/3$, condition \textup{(Q2)} gives
\[
K_{n+1}\subset\Int(I_n).
\]

We transfer the safety of $r_n$ to $K_{n+1}$. If
$T_n\leq b<T_{n+1}$, then
\[
b<T_{n+1}\leq Q_n^{A_n/97}=T(Q_n,A_n).
\]
Therefore the safety of $r_n$ gives, for every $a\in\Z$,
\begin{equation}\label{eq:safe-at-center-en}
\abs{f(r_n)-a/b}
>
2b^{-\tau}+4M_*Q_n^{-A_n}.
\end{equation}
For $x\in K_{n+1}$, the mean-value theorem and
\eqref{eq:derivative-bounds-en} yield
\[
\abs{f(x)-f(r_n)}
\leq M_*\abs{x-r_n}
<M_*R_n
\leq M_*Q_n^{-A_n}.
\]
Consequently,
\begin{equation}\label{eq:avoidance-on-K-en}
\abs{f(x)-a/b}
>
2b^{-\tau}+3M_*Q_n^{-A_n}
>b^{-\tau}.
\end{equation}

It remains to choose $I_{n+1}$ inside $K_{n+1}$. For every $k<n$,
property \textup{(F5)}$_k$ gives
$r_k\notin I_{k+1}$, and repeated use of \textup{(F1)} gives
$I_n\subseteq I_{k+1}$. Hence no previously chosen center $r_k$,
$k<n$, lies in $K_{n+1}$. It is therefore enough to remove
\begin{equation}\label{eq:forbidden-next-en}
F_{n+1}:=\mathcal Z_{n+1}\cup\{r_n\}.
\end{equation}
The set $F_{n+1}\cap K_{n+1}$ has at most
$\#\mathcal Z_{n+1}+1=\Lambda_{n+1}-1$ points. Hence
$K_{n+1}\setminus F_{n+1}$ has at most $\Lambda_{n+1}$ connected
components. Their total length is $2R_n$, so one component, say
$(u_{n+1},v_{n+1})$, has length at least
$2R_n/\Lambda_{n+1}$. Put
\[
s_n:=\frac{R_n}{\Lambda_{n+1}}
\]
and choose
\begin{equation}\label{eq:I-next-choice-en}
I_{n+1}
:=
\left[
 u_{n+1}+\frac{s_n}{2},
 u_{n+1}+\frac{3s_n}{2}
\right].
\end{equation}
Then $I_{n+1}\Subset(u_{n+1},v_{n+1})$ and
\begin{equation}\label{eq:I-next-length-en}
|I_{n+1}|
=s_n
=\frac{R_n}{\Lambda_{n+1}}
=\frac{(2Q_n)^{-A_n}}{\Lambda_{n+1}}.
\end{equation}
Thus \textup{(F1)}$_n$ and \textup{(F2)}$_{n+1}$ hold.
Furthermore, $r_n\notin I_{n+1}$ and
$I_{n+1}\subset K_{n+1}$, so for every $x\in I_{n+1}$,
\[
0<\abs{x-r_n}<R_n\leq q_n^{-A_n},
\]
where the last inequality follows from $q_n<2Q_n$. This proves
\textup{(F5)}$_n$. Property \textup{(F6)}$_n$ follows from
\eqref{eq:avoidance-on-K-en}. If $n\geq1$, then
$q_n\geq Q_n>2q_{n-1}$ by \textup{(Q1)}, proving
\textup{(F4)}$_n$.

It remains to verify \textup{(F7)}$_{n+1}$. From
\eqref{eq:I-next-length-en} and
\eqref{eq:middle-third-length-en},
\begin{equation}\label{eq:J-next-lower-en}
|J_{n+1}|
=\frac{(2Q_n)^{-A_n}}{3\Lambda_{n+1}}.
\end{equation}
Condition \textup{(Q3)} implies $Q_n^{A_n/97}\geq2$. Since
$\lfloor X\rfloor\geq X/2$ for $X\geq2$,
\[
T_{n+1}\geq\frac12Q_n^{A_n/97}.
\]
Because $2-\tau<0$,
\begin{equation}\label{eq:T-tail-upper-en}
T_{n+1}^{2-\tau}
\leq
2^{\tau-2}Q_n^{-A_n(\tau-2)/97}.
\end{equation}
Combining \eqref{eq:J-next-lower-en} and \eqref{eq:T-tail-upper-en} gives
\begin{align}
\frac{C_{\mathrm{low}}T_{n+1}^{2-\tau}}{|J_{n+1}|}
&\leq
3C_{\mathrm{low}}\Lambda_{n+1}
2^{A_n+\tau-2}
Q_n^{A_n-A_n(\tau-2)/97}\notag\\
&=
3C_{\mathrm{low}}\Lambda_{n+1}
2^{A_n+\tau-2}Q_n^{-A_n/97}\notag\\
&\leq\frac{c_{\mathrm F}}4,
\label{eq:tail-ratio-next-en}
\end{align}
where we used $97=\tau-3$ and \textup{(Q4)}. The negative exponent $-A_n/97$ ensures that the contribution
of the next lower target cutoff becomes small relative to the
length of the next source interval. This is
\textup{(F7)}$_{n+1}$. We have therefore completed the inductive step,
and hence the construction.

By \textup{(F1)}, the intervals $I_n$ are nested, nonempty, and
compact. Moreover, for $n\geq1$,
\[
Q_n>2q_{n-1}\geq2Q_{n-1},
\]
so $Q_n\to\infty$. Since $I_{n+1}\subset K_{n+1}$,
\[
\diam(I_{n+1})\leq2R_n=2(2Q_n)^{-A_n}\longrightarrow0.
\]
The nested-interval theorem therefore gives a unique point
\begin{equation}\label{eq:xi-intersection-en}
\set{\xi}=\bigcap_{n=0}^{\infty}I_n.
\end{equation}
Property \textup{(F5)} yields
\begin{equation}\label{eq:xi-rn-en}
0<\abs{\xi-r_n}<q_n^{-A_n}
\qquad(n\geq0).
\end{equation}
The denominators $q_n$ tend to infinity. Given $M\geq1$, we have
$A_n=n+3\geq M$ for all sufficiently large $n$, and hence
\[
0<\abs{\xi-r_n}<q_n^{-M}
\]
for infinitely many $n$. Thus $\xi\in\Lio$.

By \textup{(F3)}, the positive integers $T_n$ are strictly increasing
and tend to infinity. The half-open blocks
$[T_n,T_{n+1})\cap\Z_{>0}$ therefore partition all integers $b\geq T_0$.
For such a $b$, let $n$ be the unique index with
$T_n\leq b<T_{n+1}$. Since $\xi\in I_{n+1}$, property \textup{(F6)}
gives, for every $a\in\Z$,
\begin{equation}\label{eq:image-lower-en}
\abs{f(\xi)-a/b}>b^{-\tau}=b^{-100}.
\end{equation}
This is the asserted eventual lower bound.

For every $\lambda>\tau$, the inequality
$0<|f(\xi)-a/b|<b^{-\lambda}$ can hold only for $b<T_0$,
and for each such $b$ there are only finitely many possible
numerators. Hence $\mu(f(\xi))\leq\tau=100$. Finally, $\xi\in I_0\subseteq I_*\subseteq V$.
\qed

\begin{remark}\label{rem:escape-set}
The escape set
\[
\mathcal E_\tau(f)
:=
\{\xi\in\Lio\cap U:\mu(f(\xi))\leq\tau\}
\]
contains a Cantor subset in every nonempty open subinterval
of $U$. Indeed, the component $(u_{n+1},v_{n+1})$ used in the
inductive step has length at least $2R_n/\Lambda_{n+1}$.
Choose inside it two disjoint compact intervals, each of length
$R_n/(2\Lambda_{n+1})$. Both inherit the approximation and
exclusion estimates, and \textup{(F7)} holds for both after replacing
the factor $3$ in \textup{(Q4)} by $6$. Repeating this choice at every
node gives a nested binary construction whose interval
diameters tend to zero. Every branch yields a point of the
escape set, and the resulting set is a Cantor set.
\end{remark}

\subsection{Proof of Corollary~\ref{cor:intro-rigidity}}

If the conclusion were false, then $f$ would not be the restriction to
$U$ of any rational function in $\R(x)$. Theorem~\ref{thm:intro-local}
would then produce a point $\xi\in\Lio\cap U$ for which
$\mu(f(\xi))\leq100$. Hence $f(\xi)\notin\Lio$, contradicting
$f(\Lio\cap U)\subseteq\Lio$.
\qed

\subsection{Proof of Theorem~\ref{thm:intro-mahler}}

Let $F:\C\to\C$ be entire and suppose that
$F(\Lio)\subseteq\Lio$. Since $\Lio$ is dense in $\R$, every
$x\in\R$ is the limit of a sequence $(\xi_n)$ in $\Lio$. The values
$F(\xi_n)$ are real, and continuity gives
\[
F(x)=\lim_{n\to\infty}F(\xi_n)\in\R.
\]
Thus $F(\R)\subseteq\R$.

Apply Corollary~\ref{cor:intro-rigidity} to the real-analytic function
$F|_{\R}$. There are polynomials $P,Q\in\R[z]$, with $Q\neq0$ and with no
common zero in $\C$, such that
\[
F(x)=\frac{P(x)}{Q(x)}
\qquad(x\in\R).
\]
The entire function
\[
z\longmapsto Q(z)F(z)-P(z)
\]
vanishes on $\R$ and hence vanishes identically on $\C$. If $Q$ were
nonconstant, let $\zeta\in\C$ be a zero of $Q$. The identity
$QF-P\equiv0$ would give $P(\zeta)=0$, contrary to the coprimality of
$P$ and $Q$. Hence $Q$ is constant, and therefore $F\in\R[z]$.
\qed

\section{Arithmetic rigidity and nonlinear sharpness}
\label{sec:arithmetic-boundary}

Corollary~\ref{cor:intro-rigidity} leaves open the coefficient
field of rational functions with Maillet's property. We show
that a M\"obius transformation has this property exactly when
it belongs to $\mathbb Q(x)$, whereas in every higher degree
there are preserving polynomials with transcendental
coefficients.

\subsection{M\"obius rigidity}
\label{subsec:mobius-rigidity}

We prove Theorem~\ref{thm:intro-mobius}. Let
\begin{equation}\label{eq:mobius-map}
M(x)=\frac{ax+b}{cx+d},
\qquad
\Delta_M:=ad-bc\neq0,
\end{equation}
with real coefficients. Since multiplying \(a,b,c,d\) by a common
nonzero scalar does not change \(M\), the coefficient vector naturally
determines a point
\[
[a:b:c:d]\in\mathbb P^3(\R),
\]
and
\[
M\in\Q(x)
\quad\Longleftrightarrow\quad
[a:b:c:d]\in\mathbb P^3(\Q).
\]

Choose an index \(\ell\in\{1,2,3,4\}\) for which the corresponding
coordinate of
\[
\mathbf u:=(a,b,c,d)
\]
is nonzero, and rescale \(\mathbf u\) so that its \(\ell\)-th coordinate
is equal to \(1\). We continue to denote the normalized coefficients by
\(a,b,c,d\), and put \(\Delta_M=ad-bc\).
Let \(\boldsymbol\theta\in\R^3\) be the vector of the remaining
coordinates, in their inherited order. Then
\begin{equation}\label{eq:mobius-rational-chart}
M\in\Q(x)
\quad\Longleftrightarrow\quad
\boldsymbol\theta\in\Q^3.
\end{equation}
For \(h\in\Z_{>0}\), define
\begin{equation}\label{eq:mobius-Delta}
\Delta_h(\boldsymbol\theta)
:=
\min_{\mathbf r\in\Z^3}
\left\|\boldsymbol\theta-\frac{\mathbf r}{h}\right\|_\infty.
\end{equation}
By \eqref{eq:mobius-rational-chart}, if \(M\notin\Q(x)\), then
\(\Delta_h(\boldsymbol\theta)>0\) for every \(h\geq1\).

The proof splits according to the approximation properties of this
coefficient vector. Arbitrarily strong rational approximations lead to
a Liouville point in a rational fiber. Otherwise, a Diophantine lower
bound, combined with three-point reconstruction, yields a two-height
counting estimate.

\begin{lemma}[Simultaneous approximation dichotomy]
\label{lem:mobius-dichotomy}
Assume that \(M\notin\Q(x)\). Exactly one of the following alternatives
holds:
\begin{enumerate}[label=\textup{(\alph*)},leftmargin=2.7em]
\item for every \(N,H\in\Z_{>0}\) there exists \(h\geq H\) such that
\[
\Delta_h(\boldsymbol\theta)<h^{-N};
\]
\item there exist an integer \(\nu\geq1\) and a constant \(\eta_0>0\)
such that
\begin{equation}\label{eq:mobius-barrier}
\left\|\boldsymbol\theta-\frac{\mathbf r}{h}\right\|_\infty
\geq \eta_0h^{-\nu}
\end{equation}
for every \(h\in\Z_{>0}\) and every \(\mathbf r\in\Z^3\).
\end{enumerate}
\end{lemma}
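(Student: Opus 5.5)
The plan is to show that (b) is exactly the logical negation of (a); the dichotomy is then a matter of unwinding quantifiers, with one small point needing care. The two alternatives are incompatible: if (b) holds with $\nu,\eta_0$, then for every $h$ we have $\Delta_h(\boldsymbol\theta)\geq\eta_0h^{-\nu}$. Taking $N=\nu+1$ and $H$ with $H>\eta_0^{-1}$, every $h\geq H$ satisfies $\eta_0h^{-\nu}>h^{-N}$, so (a) fails for this pair $(N,H)$. The minimum in \eqref{eq:mobius-Delta} is attained, since the set of $\mathbf r$ with $\|\boldsymbol\theta-\mathbf r/h\|_\infty\leq1$ is finite; hence $\Delta_h$ is well defined.

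Now assume (a) fails. Then there are $N,H\in\Z_{>0}$ such that $\Delta_h(\boldsymbol\theta)\geq h^{-N}$ for all $h\geq H$. This already gives the barrier \eqref{eq:mobius-barrier} for large $h$, with $\nu=N$ and constant $1$. It remains to handle the finitely many $h$ with $1\leq h<H$. Here I will use the remark following \eqref{eq:mobius-Delta}: since $M\notin\Q(x)$, we have $\boldsymbol\theta\notin\Q^3$ by \eqref{eq:mobius-rational-chart}, so $\Delta_h(\boldsymbol\theta)>0$ for each such $h$.

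Put
\[
\eta_0:=\min\Bigl\{1,\ \min_{1\leq h<H}\Delta_h(\boldsymbol\theta)\,h^{N}\Bigr\}>0,
\qquad \nu:=N.
\]
This is a minimum of finitely many positive numbers. Then for every $h\in\Z_{>0}$ and every $\mathbf r\in\Z^3$,
\[
\|\boldsymbol\theta-\mathbf r/h\|_\infty\geq\Delta_h(\boldsymbol\theta)\geq\eta_0h^{-\nu},
\]
which is (b).

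No step is a real obstacle. The only point needing care is the small-$h$ range, where the positivity of each $\Delta_h(\boldsymbol\theta)$ relies on $\boldsymbol\theta\notin\Q^3$, that is, on the hypothesis $M\notin\Q(x)$ through the normalized chart.
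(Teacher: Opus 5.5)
Your proposal is correct and follows the paper's proof: both take $\nu=N$ and $\eta_0=\min\{1,\min_{1\leq h<H}h^{N}\Delta_h(\boldsymbol\theta)\}$, using $\boldsymbol\theta\notin\Q^3$ for the finitely many small $h$. The only differences are cosmetic. You spell out why (b) excludes (a), which the paper calls plain. The paper enlarges $H$ to $H\geq2$ so the inner minimum is never empty; in your version, the case $H=1$ requires reading an empty minimum as $+\infty$, which still gives $\eta_0=1$.
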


\begin{proof}
If \textup{(a)} fails, there exist $N,H\in\Z_{>0}$ such that
$\Delta_h(\boldsymbol\theta)\geq h^{-N}$ for every $h\geq H$.
Increasing $H$ if necessary, we may assume that $H\geq2$.
Since the finitely many numbers $\Delta_h(\boldsymbol\theta)$,
$1\leq h<H$, are positive, one may take $\nu=N$ and
\[
\eta_0:=
\min\left\{
1,\min_{1\leq h<H}h^N\Delta_h(\boldsymbol\theta)
\right\}>0.
\]
This gives \textup{(b)}. Conversely, \textup{(b)} plainly excludes
\textup{(a)}.
\end{proof}

In alternative \textup{(a)}, rational approximation of the coefficient
vector transfers to rational approximation of the inverse image of a
fixed rational value.

\begin{lemma}[Rational fibers in the projectively Liouville case]
\label{lem:mobius-rational-fiber}
Assume that \(M\notin\Q(x)\) and that alternative \textup{(a)} of
Lemma~\ref{lem:mobius-dichotomy} holds. Then every nonempty open
interval \(V\) on which \(M\) is defined contains a point \(\xi\in\Lio\)
such that
\[
M(\xi)\in\Q.
\]
\end{lemma}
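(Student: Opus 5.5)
We look for $\xi$ in the fiber $M^{-1}(y_0)$ over a rational value $y_0$. For a rational $y_0=s/t$, solving $M(\xi)=y_0$ gives
\[
\xi=\frac{b-dy_0}{cy_0-a}=\frac{bt-ds}{cs-at},
\]
provided $cs\neq at$. The map $y\mapsto M^{-1}(y)$ is a Möbius map, hence a local diffeomorphism. Since $M$ is nonconstant ($\Delta_M\neq0$), the set $M(V)$ is an open interval. The plan is to pick $y_0\in\Q\cap M(V)$ so that $\xi:=M^{-1}(y_0)\in V$ and $\xi$ is Liouville. The flexibility needed comes from alternative (a). Write $\mathbf u=(a,b,c,d)$, normalized as in \eqref{eq:mobius-rational-chart}. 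For infinitely many $h$ there is an integer vector $\mathbf r$ with $\|\mathbf u-\mathbf r_h/h\|_\infty<h^{-N}$, where $\mathbf r_h\in\Z^4$ has $h$ in the $\ell$-th slot. Denote by $(a_h,b_h,c_h,d_h)=\mathbf r_h/h$ the rational approximants.

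The key step is to compare $\xi$ with the rational point $\xi_h:=(b_h-d_hy_0)/(c_hy_0-a_h)$. Clearing denominators, $\xi_h=(b'_ht-d'_hs)/(c'_hs-a'_ht)$ with integer numerators $b'_h=hb_h$, and so on. Its denominator is therefore at most $O(h)$, with the constant depending only on $y_0$ and $\mathbf u$. Because $cy_0-a\neq0$ at our chosen $y_0$, the map $(a,b,c,d)\mapsto (b-dy_0)/(cy_0-a)$ is locally Lipschitz. Hence $|\xi-\xi_h|\ll h^{-N}$, which is $\le \mathrm{den}(\xi_h)^{-N+1}$ for large $h$.

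It remains to make sure that $\xi_h\neq\xi$ and that infinitely many distinct approximants occur. The main obstacle is precisely this nondegeneracy: $\xi$ is fixed, so a fixed rational $\xi_h$ could coincide with $\xi$ only if $\xi\in\Q$. So we choose $y_0$ with $\xi$ irrational. This is possible: if $M^{-1}(\Q)\cap V\subseteq\Q$ for all rational $y_0$ in an interval, then $M^{-1}$ would map infinitely many rationals to rationals. Three such points determine $M^{-1}$, and hence $M$, over $\Q$ by rational reconstruction (solve the linear system for the coefficients), contradicting $M\notin\Q(x)$. Thus some $y_0\in\Q\cap M(V)$ has $\xi\notin\Q$, and we may shrink to ensure $cy_0\ne a$. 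Then $0<|\xi-\xi_h|$ holds automatically.

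The denominators of $\xi_h$ must also tend to infinity. Otherwise infinitely many of the approximants lie in a finite set, yet they converge to the irrational $\xi$, which is absurd. Finally, we need all $N$ at once. Alternative (a) provides, for each $N$, infinitely many good $h$, and the estimate above holds with $N$-dependent constants absorbed for large $h$. Hence for every $N$ there are infinitely many rationals $p/q$ with $0<|\xi-p/q|<q^{-N+1}$. This gives $\xi\in\Lio$, $\xi\in V$, and $M(\xi)=y_0\in\Q$.
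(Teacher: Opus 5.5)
Your proof is correct and follows the paper's argument. You pick a rational value $y_0\in M(V)$ whose preimage is irrational, using three-point rigidity. You then push the coefficient approximants from alternative (a) through the locally Lipschitz map $\mathbf u\mapsto(b-dy_0)/(cy_0-a)$, which yields rationals of denominator $O(h)$ within $O(h^{-N})$ of $\xi$.

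The differences are cosmetic. You handle each exponent $N$ separately, whereas the paper uses one diagonal sequence with $\|\boldsymbol\theta-\mathbf r_n/h_n\|_\infty<h_n^{-n}$. Also, the condition $cy_0\neq a$ needs no shrinking, since it holds automatically for every $y_0\in M(V)$.
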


\begin{proof}
The image \(M(V)\) is a nonempty open interval. Choose
\(\rho\in\Q\cap M(V)\) such that
\begin{equation}\label{eq:mobius-fiber-xi}
\xi:=M^{-1}(\rho)
\end{equation}
is irrational. Such a choice is possible. Indeed, if three distinct
rational values had rational preimages, then \(M\) would send three
distinct rational points to three distinct rational points. A M\"obius
transformation is uniquely determined by its values at three distinct
points, and the corresponding linear system has rational coefficients;
hence its projective coefficient vector would belong to
\(\mathbb P^3(\Q)\), contrary to
\eqref{eq:mobius-rational-chart}. By construction, \(\xi\in V\).

Using alternative \textup{(a)} recursively, choose strictly increasing
integers \(h_n\) and vectors \(\mathbf r_n\in\Z^3\) such that
\begin{equation}\label{eq:mobius-simultaneous-approximants}
\left\|\boldsymbol\theta-\frac{\mathbf r_n}{h_n}\right\|_\infty
<h_n^{-n}
\qquad(n\geq1).
\end{equation}
Insert the three coordinates of \(\mathbf r_n/h_n\) into the fixed
chart, keeping the \(\ell\)-th coordinate equal to \(1\), and write the
resulting coefficient vector as
\[
\mathbf u_n=(a^{(n)},b^{(n)},c^{(n)},d^{(n)}).
\]
Since \(\mathbf u_n\to\mathbf u\), we have
\(a^{(n)}d^{(n)}-b^{(n)}c^{(n)}\neq0\) for all sufficiently large \(n\).
Let \(M_n\in\Q(x)\) be the corresponding M\"obius transformation.
Since \(a-\rho c\neq0\), the map sending a coefficient vector near
\(\mathbf u\) to the finite preimage of \(\rho\) is smooth. Hence
\[
\xi_n:=M_n^{-1}(\rho)\in\Q,
\qquad
|\xi-\xi_n|\leq C_1h_n^{-n}
\]
for some \(C_1>0\) and all sufficiently large \(n\).

After multiplying \(\mathbf u_n\) by \(h_n\), all four coefficients are
integers of size \(O(h_n)\). Writing \(\rho=r/s\) in lowest terms and
using
\[
M_n^{-1}(\rho)=
\frac{\rho d^{(n)}-b^{(n)}}{a^{(n)}-\rho c^{(n)}},
\]
we see that the positive denominator \(q_n'\) of \(\xi_n\) in lowest
terms satisfies
\begin{equation}\label{eq:mobius-fiber-denominator}
q_n'\leq C_2h_n
\end{equation}
for some \(C_2>0\). Moreover, \(\xi_n\to\xi\), and \(\xi\) is
irrational, so \(q_n'\to\infty\).

Fix \(A\geq1\). For all sufficiently large \(n>A\),
\[
0<|\xi-\xi_n|
\leq C_1h_n^{-n}
<C_2^{-A}h_n^{-A}
\leq(q_n')^{-A}.
\]
Thus \(\xi\in\Lio\), while \(M(\xi)=\rho\in\Q\).
\end{proof}

We now turn to the Diophantine case. The following reconstruction
lemma replaces the determinant step in Section~\ref{sec:scarcity}:
three rational points sufficiently close to the graph of \(M\)
determine a rational M\"obius transformation with nearby coefficients
and controlled height.

\begin{lemma}[Three-point rational reconstruction]
\label{lem:mobius-reconstruction}
Fix the M\"obius transformation \(M\) in \eqref{eq:mobius-map}, with the
normalization above, and a nondegenerate compact interval \(I_*\) on
which \(M\) is defined. There exist constants \(c_0,C_0>0\), depending
only on \(M\) and \(I_*\), with the following property.

Let \(x_1,x_2,x_3\in I_*\) be distinct, let \(y_1,y_2,y_3\in\R\), and
put
\[
\varepsilon:=\max_{1\leq i\leq3}|y_i-M(x_i)|,
\qquad
V(x_1,x_2,x_3):=\prod_{1\leq i<j\leq3}|x_i-x_j|.
\]
If
\begin{equation}\label{eq:mobius-reconstruction-smallness}
\varepsilon\leq c_0V(x_1,x_2,x_3),
\end{equation}
then there is a coefficient vector \(\widehat{\mathbf u}\), normalized
in the same chart as \(\mathbf u\), such that
\begin{equation}\label{eq:mobius-reconstruction-distance}
\|\widehat{\mathbf u}-\mathbf u\|_\infty
\leq
C_0\frac{\varepsilon}{V(x_1,x_2,x_3)}.
\end{equation}
It represents a nonconstant M\"obius transformation
\(\widehat M\) satisfying \(\widehat M(x_i)=y_i\) for \(1\leq i\leq3\).

If, in addition, for some positive real numbers $Q,B$,
\[
x_i=\frac{p_i}{q_i},
\qquad
y_i=\frac{r_i}{b_i},
\qquad
q_i<2Q,
\qquad
b_i<2B,
\]
with \(p_i,r_i\in\Z\) and \(q_i,b_i\in\Z_{>0}\), then the three free
coordinates of \(\widehat{\mathbf u}\) can be written with a common
positive denominator \(h\) satisfying
\begin{equation}\label{eq:mobius-reconstruction-height}
1\leq h\leq C_0Q^3B^3.
\end{equation}
\end{lemma}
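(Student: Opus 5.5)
The plan is to reduce the statement to a $3\times3$ linear system in the free coordinates and then apply Cramer's rule. A coefficient vector $\widehat{\mathbf u}=(\hat a,\hat b,\hat c,\hat d)$ satisfies $\widehat M(x_i)=y_i$ (with $\hat cx_i+\hat d\neq0$) exactly when
\[
\hat a x_i+\hat b-\hat c x_iy_i-\hat d y_i=0\qquad(1\le i\le3).
\]
These are three linear equations in four unknowns, with rows $(x_i,1,-x_iy_i,-y_i)$. Fixing the $\ell$-th coordinate equal to $1$ turns them into a square system $G\widehat{\boldsymbol\theta}=\mathbf g$. Here $G$ is the $3\times3$ matrix obtained by deleting column $\ell$, and $\mathbf g$ is minus the deleted column. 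Let $G_0,\mathbf g_0$ denote the same objects with $y_i$ replaced by $M(x_i)$; the true vector satisfies $G_0\boldsymbol\theta=\mathbf g_0$.

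\emph{Key step: a lower bound for $\det G_0$.} I would prove the identity
\[
\det G_0=\pm\,u_\ell\,\frac{\Delta_M\,V(x_1,x_2,x_3)}{\prod_{i}(cx_i+d)},
\]
where $u_\ell=1$ after normalization. The route is as follows.
\begin{enumerate}[label=\textup{(\roman*)},leftmargin=2.7em]
\item Multiply row $i$ by $cx_i+d$. The entries become polynomials of degree at most $2$ in $x_i$.
\item Each $3\times3$ minor of the resulting matrix is then an alternating polynomial in $(x_1,x_2,x_3)$ of degree at most $2$ in each variable. It is therefore a constant multiple of the Vandermonde product.
\item The four maximal minors are proportional, with signs, to the coordinates of the kernel vector $\mathbf u$ (Plücker relation). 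The proportionality constant is obtained by comparing leading coefficients, and this produces $\Delta_M$.
\end{enumerate}
Since $|cx+d|$ is bounded above and below on $I_*$, this gives $|\det G_0|\ge c_1V$, with $c_1>0$ depending only on $M$ and $I_*$. I expect this identity to be the main obstacle: the computation is routine, but the signs and the placement of $\Delta_M$ must be tracked carefully.

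\emph{Perturbation.} The entries of $G-G_0$ and $\mathbf g-\mathbf g_0$ are $O(\varepsilon)$, since $x_i\in I_*$ is bounded. Therefore $\det G=\det G_0+O(\varepsilon)\ge c_1V/2$, provided $c_0$ is chosen small. So the system has a unique solution $\widehat{\boldsymbol\theta}$. Rather than estimating the solution directly, I would use the residual of the true vector at the perturbed data. Its $i$-th entry is
\[
ax_i+b-(cx_i+d)y_i=(cx_i+d)\bigl(M(x_i)-y_i\bigr)=O(\varepsilon),
\]
and it equals $(\mathbf g-G\boldsymbol\theta)_i$. Hence
\[
\widehat{\boldsymbol\theta}-\boldsymbol\theta=G^{-1}(\mathbf g-G\boldsymbol\theta)=\frac{\operatorname{adj}(G)}{\det G}(\mathbf g-G\boldsymbol\theta).
\]
The adjugate is bounded, so $\|\widehat{\boldsymbol\theta}-\boldsymbol\theta\|_\infty\le C_0\varepsilon/V$, which is \eqref{eq:mobius-reconstruction-distance}. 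Shrinking $c_0$ makes $\widehat{\mathbf u}$ close enough to $\mathbf u$ that two further facts hold. First, $\hat a\hat d-\hat b\hat c\neq0$, so $\widehat M$ is nonconstant. Second, $\hat cx_i+\hat d\neq0$ on $I_*$, so the linear equations genuinely give $\widehat M(x_i)=y_i$.

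\emph{Height.} Multiply row $i$ of the augmented system by $q_ib_i$. The entries become integers: $p_ib_i$, $q_ib_i$, $p_ir_i$, $q_ir_i$. Since $x_i\in I_*$ and $y_i$ lies within $c_0V\le C$ of $M(x_i)$, we have $|p_i|\ll q_i$ and $|r_i|\ll b_i$. So all entries are $O(QB)$. Cramer's rule then writes each free coordinate of $\widehat{\mathbf u}$ with the common denominator $h=|\det(\text{scaled }G)|$. This is a nonzero integer, because $\det G\neq0$, and it is at most $C(QB)^3$. This gives \eqref{eq:mobius-reconstruction-height} after enlarging $C_0$.
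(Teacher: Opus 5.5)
Your proposal is correct and is essentially the paper's proof: solving the normalized $3\times3$ system by Cramer's rule is the same as the paper's normalized cofactor vector $\widehat{\mathbf u}=\mathbf w(\mathcal A)/w_\ell(\mathcal A)$, your identity for $\det G_0$ is the paper's $\mathbf w(\mathcal A_0)=\Lambda\mathbf u$ (your alternating-polynomial argument does establish it, with constant $\pm\Delta_M$), and the height bound via the row-scaled integer system is identical. The only differences are minor and both are valid: you bound $\widehat{\boldsymbol\theta}-\boldsymbol\theta$ through the residual and the adjugate rather than by perturbing all four cofactors, and you obtain $\hat cx_i+\hat d\neq0$ from proximity to $\mathbf u$ rather than from the kernel relation together with $\hat a\hat d-\hat b\hat c\neq0$.
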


\begin{proof}
For \(x,y\in\R\), put
\[
v(x,y):=(x,1,-xy,-y)\in\R^4.
\]
Let \(\mathcal A_0\) be the \(3\times4\) matrix with rows
\(v(x_i,M(x_i))\), and let \(\mathcal A\) have rows \(v(x_i,y_i)\).
For a \(3\times4\) matrix \(\mathcal B\), let
\(w_j(\mathcal B):=(-1)^{j+1}\det\mathcal B^{(j)}\), where
\(\mathcal B^{(j)}\) is obtained by deleting column \(j\). Then
\(\mathcal B\mathbf w(\mathcal B)^{\mathsf T}=0\). A direct
calculation gives
\begin{equation}\label{eq:mobius-cofactor-formula}
\mathbf w(\mathcal A_0)
=\Lambda(x_1,x_2,x_3)\mathbf u,
\end{equation}
where
\[
\Lambda(x_1,x_2,x_3)
=
\frac{\Delta_M
(x_1-x_2)(x_1-x_3)(x_2-x_3)}
{(cx_1+d)(cx_2+d)(cx_3+d)}.
\]
Since \(I_*\) is compact and contains no pole of \(M\),
\begin{equation}\label{eq:mobius-Lambda-lower}
|\Lambda(x_1,x_2,x_3)|
\geq c_1V(x_1,x_2,x_3)
\end{equation}
for some \(c_1>0\).

First restrict \(c_0\leq1\). All entries of \(\mathcal A_0\) and,
under \eqref{eq:mobius-reconstruction-smallness}, of \(\mathcal A\)
then lie in a fixed compact set. The maximal minors, being polynomial
in the matrix entries, satisfy
\begin{equation}\label{eq:mobius-cofactor-perturbation}
\|\mathbf w(\mathcal A)-\mathbf w(\mathcal A_0)\|_\infty
\leq C_1\varepsilon,
\end{equation}
with \(C_1\) independent of any subsequent decrease in \(c_0\).
Choose \(c_0\) so small that the right-hand side is at most
\(|\Lambda|/2\). Since \(u_\ell=1\), we have
\(w_\ell(\mathcal A_0)=\Lambda\), and hence
\(|w_\ell(\mathcal A)|\geq|\Lambda|/2>0\). Define
\[
\widehat{\mathbf u}:=
\frac{\mathbf w(\mathcal A)}{w_\ell(\mathcal A)}.
\]
Equations \eqref{eq:mobius-cofactor-formula}--\eqref{eq:mobius-cofactor-perturbation}
give
\[
\|\widehat{\mathbf u}-\mathbf u\|_\infty
\leq
\frac{2C_1(1+\|\mathbf u\|_\infty)\varepsilon}{|\Lambda|},
\]
which proves \eqref{eq:mobius-reconstruction-distance}.
Since \(ad-bc\neq0\) is an open condition on the coefficient vector,
decreasing \(c_0\) if necessary ensures that \(\widehat{\mathbf u}\)
represents a nonconstant M\"obius transformation.
Its denominator cannot vanish at any \(x_i\), since the kernel
relation would then force its numerator to vanish there as well,
contradicting its nonzero determinant. Thus the kernel relation gives
\(\widehat M(x_i)=y_i\).

For the height assertion, multiply the \(i\)-th row of \(\mathcal A\)
by \(q_ib_i\). The resulting integer row is
\[
(p_ib_i,q_ib_i,-p_ir_i,-q_ir_i).
\]
The numbers \(x_i\) lie in \(I_*\), and
\eqref{eq:mobius-reconstruction-smallness} bounds the \(y_i\)
uniformly in terms of \(M\) and \(I_*\). Thus every entry of this
integer matrix is \(O_{M,I_*}(QB)\). Its cofactor vector
\(\mathbf k\in\Z^4\) spans the same one-dimensional kernel as
\(\mathbf w(\mathcal A)\), and each of its coordinates is
\(O_{M,I_*}(Q^3B^3)\). Since \(k_\ell\neq0\), the normalized
coordinates are \(k_j/k_\ell\); taking \(h=|k_\ell|\) proves
\eqref{eq:mobius-reconstruction-height}, after enlarging \(C_0\) if
necessary. No reduction of this common denominator is needed.
\end{proof}

For the Diophantine branch, assume that alternative \textup{(b)} of
Lemma~\ref{lem:mobius-dichotomy} holds, and fix
\(\nu,\eta_0\) as in \eqref{eq:mobius-barrier} for this \(M\).
Fix a nondegenerate compact interval \(I_*\) on which \(M\) is defined.
Let \(m_*\) and \(M_*\) be the minimum and maximum of \(|M'|\) on
\(I_*\), so that
\begin{equation}\label{eq:mobius-derivative-bounds}
0<m_*\leq|M'(x)|\leq M_*
\qquad(x\in I_*).
\end{equation}
Choose an integer
\begin{equation}\label{eq:mobius-tau-choice}
\tau_1\geq9\nu+18
\end{equation}
and, for integers \(A\geq3\) and \(Q\geq2\), put
\begin{equation}\label{eq:mobius-T}
T_1(Q,A):=Q^{A/(\tau_1-3)}.
\end{equation}
For a nondegenerate compact interval \(J\subseteq I_*\) and integers
\(H,Q\geq2\), define
\begin{equation}\label{eq:mobius-dangerous-set}
\begin{split}
\mathcal D_M(J;Q,A,H)
:=
\Bigl\{\frac pq\in J:\;&\gcd(p,q)=1,\quad Q\leq q<2Q,\\
&\exists\,r\in\Z,\ b\in\Z_{>0}
\text{ with }H\leq b<T_1(Q,A),\\
&\abs{M(p/q)-r/b}
\leq2b^{-\tau_1}+4M_*Q^{-A}
\Bigr\}.
\end{split}
\end{equation}
As in Section~\ref{sec:scarcity}, the source fractions are reduced,
whereas the target fractions need not be.

\begin{proposition}[M\"obius two-height counting estimate]
\label{prop:mobius-scarcity}
There exists a constant \(C_{\mathrm{mob}}>0\), depending only on
\(M\), \(I_*\), and \(\tau_1\), with the following property. For every
integer \(A\geq3\), there exist constants \(C_A>0\) and \(Q_A\in\Z_{>0}\)
such that
\begin{equation}\label{eq:mobius-scarcity}
\#\mathcal D_M(J;Q,A,H)
\leq
C_{\mathrm{mob}}Q^2H^{2-\tau_1}+C_AQ
\end{equation}
for every nondegenerate compact interval \(J\subseteq I_*\), every
integer \(H\geq2\), and every integer \(Q\geq Q_A\). The constants
\(C_A\) and \(Q_A\) are uniform in \(J\) and \(H\).
\end{proposition}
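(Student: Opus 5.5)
The plan follows the proof of Proposition~\ref{prop:scarcity-en}. I split the target heights at a threshold. Small heights are handled by the same elementary counting. For large heights, the determinant step is replaced by Lemma~\ref{lem:mobius-reconstruction} combined with the Diophantine barrier \eqref{eq:mobius-barrier}. The effect is that large target heights contribute only $O(\log Q)$ resonant centers, with no cell decomposition at all.

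\emph{Small target heights.} Partition $[H,T_1(Q,A))$ into dyadic blocks $[B,2B)$ with $B=2^kH$. Call a block small if $B<Q^{1/2}$; a block crossing the threshold is treated as small.
\begin{itemize}[leftmargin=2em]
\item Since $M$ is strictly monotone on $I_*$ with $m_*\le|M'|\le M_*$, each witness pair $(r,b)$ cuts out an interval $E_{r,b}\subseteq J$ of length $\ll b^{-\tau_1}+Q^{-A}$.
\item There are $O(B^2)$ admissible pairs, because $|r|\le C_*b$.
\item Lemma~\ref{lem:rational-union} then bounds the block count by $\ll Q^2B^{2-\tau_1}+Q^{2-A}B^2+B^2$.
\item Summing over dyadic $B<Q^{1/2}$ gives $C_{\mathrm{mob}}Q^2H^{2-\tau_1}+O(Q^{3-A}+Q)$, which is $C_{\mathrm{mob}}Q^2H^{2-\tau_1}+O(Q)$.
\end{itemize}
The constants depend only on $M$, $I_*$ and $\tau_1$, and in particular not on $J$.

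\emph{Large target heights.} Fix a block with $Q^{1/2}\le B<T_1(Q,A)$. I claim it contains at most two resonant centers once $Q\ge Q_A$. Suppose instead that $x_i=p_i/q_i$, $i=1,2,3$, are distinct resonant centers with witnesses $y_i=r_i/b_i$, $b_i\in[B,2B)$.
\begin{itemize}[leftmargin=2em]
\item \emph{Separation.} Farey separation gives $|x_i-x_j|>1/(4Q^2)$, so $V(x_1,x_2,x_3)>(4Q^2)^{-3}$.
\item \emph{Vertical error.} Since $b_i\ge B$ and $Q^{-A}=T_1^{-(\tau_1-3)}<B^{-(\tau_1-3)}$, we get $\varepsilon\le 2B^{-\tau_1}+4M_*Q^{-A}\ll B^{-(\tau_1-3)}$.
\item \emph{Smallness condition.} $\varepsilon/V\ll Q^6B^{-(\tau_1-3)}\le Q^{6-(\tau_1-3)/2}\to0$, so \eqref{eq:mobius-reconstruction-smallness} holds for $Q$ large.
\item \emph{Reconstruction.} Lemma~\ref{lem:mobius-reconstruction} yields $\widehat{\mathbf u}$ with rational free coordinates $\mathbf r/h$, where $h\le C_0Q^3B^3$, and $\|\widehat{\mathbf u}-\mathbf u\|_\infty\le C_0\varepsilon/V$.
\item \emph{Barrier.} Since $\boldsymbol\theta\notin\Q^3$, \eqref{eq:mobius-barrier} gives $\|\widehat{\mathbf u}-\mathbf u\|_\infty\ge\eta_0(C_0Q^3B^3)^{-\nu}$.
\item \emph{Comparison.} Together these require $Q^{6+3\nu}B^{3\nu}\gg B^{\tau_1-3}$, i.e. $B^{\tau_1-3-3\nu}\ll Q^{6+3\nu}$.
\item \emph{Contradiction.} By \eqref{eq:mobius-tau-choice}, $\tau_1-3-3\nu\ge 6\nu+15$. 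With $B\ge Q^{1/2}$ this gives $B^{\tau_1-3-3\nu}\ge Q^{3\nu+15/2}$, which beats $Q^{6+3\nu}$ by a factor $Q^{3/2}$. So the requirement fails for $Q\ge Q_A$.
\end{itemize}
Hence each large block contributes at most $2$ centers. There are $O_A(\log Q)$ blocks, so large heights contribute $O_A(\log Q)\le C_AQ$.

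The threshold $Q_A$ depends only on $A$, $M$, $I_*$, $\nu$, $\eta_0$ and $\tau_1$. Nothing in either step used $J$ beyond $J\subseteq I_*$, and nothing used $H$, so $C_A$ and $Q_A$ are uniform in $J$ and $H$ as claimed.

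The main obstacle is the exponent bookkeeping in the large range. The point is that the Farey lower bound on $V$ costs only a fixed power $Q^6$, and the height bound $Q^3B^3$ is polynomial. The choice $\tau_1\ge 9\nu+18$, together with the threshold $B\ge Q^{1/2}$, must beat both losses simultaneously. The threshold must also stay low enough that the small-range term $\sum B^2$ remains $O(Q)$, and $B<Q^{1/2}$ gives exactly that. If the margin proves too tight, I would lower the threshold to $B\ge Q^{\theta}$ with $\theta$ slightly below $1/2$ and recheck both sides.
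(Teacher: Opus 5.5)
Your proposal is correct and follows the paper's proof essentially step for step: the same dyadic split at $Q^{1/2}$, the same elementary count via Lemma~\ref{lem:rational-union} in the small range, and in the large range the same three-point reconstruction played against the barrier \eqref{eq:mobius-barrier}, with $V>1/(64Q^6)$, $h\ll Q^3B^3$, and the identical exponent margin $Q^{3/2}$. Your bound $\varepsilon\ll B^{-(\tau_1-3)}$ is the paper's $s\geq(\tau_1-3)u$ without the auxiliary exponent $s$; note also that the barrier holds because alternative \textup{(b)} is assumed, not because $\boldsymbol\theta\notin\Q^3$.
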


\begin{proof}
Fix \(A\geq3\). All implied constants below may depend on the fixed
data \(M,I_*,\nu,\eta_0,\tau_1\), but not on \(J,H,Q\) or the target
block; any additional dependence on \(A\) is indicated.
If \(T_1(Q,A)\leq H\), then
\[
\mathcal D_M(J;Q,A,H)=\varnothing.
\]
Otherwise, decompose \([H,T_1(Q,A))\cap\Z_{>0}\) into the dyadic blocks
\[
\mathcal B_k
:=[B_k,\min\{2B_k,T_1(Q,A)\})\cap\Z_{>0},
\qquad B_k:=2^kH,
\]
with \(B_k<T_1(Q,A)\), and let \(\mathcal D_k\) be the source
centers admitting a witness with denominator in \(\mathcal B_k\).
Then \(\#\mathcal D_M(J;Q,A,H)\leq\sum_k\#\mathcal D_k\).
We call a block small if \(B_k<Q^{1/2}\) and large otherwise.

\smallskip
\noindent\textit{Small target heights.}
For a small block with left endpoint \(B\), the approximation condition
and the boundedness of \(M\) on \(I_*\) show that there are \(O(B^2)\)
pairs \((r,b)\) that can occur as witnesses. For each such pair,
monotonicity and \eqref{eq:mobius-derivative-bounds} show that
\[
E_{r,b}
:=
\set{x\in J:
\abs{M(x)-r/b}\leq2b^{-\tau_1}+4M_*Q^{-A}}
\]
is empty, a singleton, or an interval of length
\[
|E_{r,b}|\ll b^{-\tau_1}+Q^{-A}.
\]
Consequently, their union \(E_B\) has \(O(B^2)\) components and
\[
|E_B|\ll B^{2-\tau_1}+B^2Q^{-A}.
\]
Lemma~\ref{lem:rational-union} gives
\[
\#\mathcal D_k
\ll
Q^2B^{2-\tau_1}+Q^{2-A}B^2+B^2.
\]
Summing over \(B=2^kH<Q^{1/2}\), we obtain
\begin{equation}\label{eq:mobius-small-total}
\sum_{B_k<Q^{1/2}}\#\mathcal D_k
\leq
C_{\mathrm{mob}}Q^2H^{2-\tau_1}+O_A(Q),
\end{equation}
with \(C_{\mathrm{mob}}\) independent of \(J,A,H,Q\).

\smallskip
\noindent\textit{Large target heights.}
Fix a large block with left endpoint \(B=Q^u\). Then
\begin{equation}\label{eq:mobius-u-range}
u\geq\frac12,
\qquad
u<\frac{A}{\tau_1-3}.
\end{equation}
We shall show that, for all sufficiently large \(Q\), this block
contains at most two resonant centers. Indeed, three such centers
would determine, via Lemma~\ref{lem:mobius-reconstruction}, a rational
M\"obius transformation sufficiently close to \(M\) to contradict the
Diophantine barrier \eqref{eq:mobius-barrier}. Suppose therefore that
the block contains three distinct resonant centers
\[
x_i=\frac{p_i}{q_i},
\qquad Q\leq q_i<2Q,
\qquad1\leq i\leq3,
\]
with witnesses
\[
y_i=\frac{r_i}{b_i},
\qquad B\leq b_i<2B.
\]
Put
\[
\delta:=2B^{-\tau_1}+4M_*Q^{-A},
\qquad
s:=\min\{\tau_1u,A\}.
\]
Then \(|y_i-M(x_i)|\leq\delta\ll Q^{-s}\), and
\begin{equation}\label{eq:mobius-s-lower}
s\geq(\tau_1-3)u.
\end{equation}
Since the source fractions are distinct and reduced,
\[
V(x_1,x_2,x_3)
=\prod_{i<j}|x_i-x_j|
>\frac{1}{64Q^6}.
\]
The choice \eqref{eq:mobius-tau-choice} implies
\(Q^6\delta\to0\), uniformly in the block. Hence
Lemma~\ref{lem:mobius-reconstruction} applies for all sufficiently
large \(Q\), and produces a rational coefficient vector in the fixed
chart satisfying
\begin{equation}\label{eq:mobius-coefficient-upper}
\|\widehat{\mathbf u}-\mathbf u\|_\infty
\ll Q^6\delta
\ll Q^{6-s}.
\end{equation}
Its free coordinates have a common positive denominator \(h\) with
\[
1\leq h\ll Q^3B^3=Q^{3(1+u)}.
\]
Applying \eqref{eq:mobius-barrier} to these coordinates gives
\begin{equation}\label{eq:mobius-coefficient-lower}
\|\widehat{\mathbf u}-\mathbf u\|_\infty
\gg Q^{-3\nu(1+u)}.
\end{equation}
The exponent gap between \eqref{eq:mobius-coefficient-upper} and
\eqref{eq:mobius-coefficient-lower} satisfies
\begin{align}
 s-6-3\nu(1+u)
&\geq
(\tau_1-3-3\nu)u-(3\nu+6)\notag\\
&\geq
\frac{\tau_1-3-3\nu}{2}-(3\nu+6)
\geq\frac32,
\label{eq:mobius-exponent-gap}
\end{align}
where we used \(u\geq1/2\) and \eqref{eq:mobius-tau-choice}.
Thus the reconstructed coefficient vector would violate the
Diophantine barrier for all sufficiently large \(Q\), uniformly in the
block, in \(J\), and in \(H\). Each large block therefore contains at
most two resonant centers; no subdivision into short source cells is
needed.

There are at most
\[
1+\log_2\frac{T_1(Q,A)}H
\leq1+\frac{A}{\tau_1-3}\log_2Q
\]
large blocks, so their total contribution is \(O_A(\log Q)=O_A(Q)\).
Together with \eqref{eq:mobius-small-total}, this proves
\eqref{eq:mobius-scarcity}. Every lower threshold imposed on \(Q\)
depends only on the fixed data and \(A\), not on \(J\) or \(H\).
\end{proof}

The preceding counting estimate supplies safe rational centers at every
sufficiently large source scale. A nested-interval construction now
turns these centers into the required Liouville point.

\begin{lemma}[Finite-exponent escape in the Diophantine case]
\label{lem:mobius-finite-escape}
Assume that alternative \textup{(b)} of
Lemma~\ref{lem:mobius-dichotomy} holds. Then every nonempty open
interval \(V\) on which \(M\) is defined contains a point
\(\xi\in\Lio\) such that
\[
\mu(M(\xi))\leq\tau_1.
\]
\end{lemma}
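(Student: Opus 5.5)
The plan is to repeat the nested-interval construction of the proof of Theorem~\ref{thm:intro-local}, with $f$, $\tau$, $97$ and Proposition~\ref{prop:scarcity-en} replaced by $M$, $\tau_1$, $\tau_1-3$ and Proposition~\ref{prop:mobius-scarcity}. The M\"obius setting is simpler in one respect: there are no Wronskian zero sets $\mathcal Z_n$ to avoid. Proposition~\ref{prop:mobius-scarcity} holds on every nondegenerate compact $J\subseteq I_*$, and its constants $C_A,Q_A$ are uniform in $J$ and $H$.

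\textbf{Safe center and setup.} First choose $I_*\Subset V$ compact with $M$ defined on $I_*$. Since $M$ is nonconstant and pole-free there, $M'$ has constant sign and $0<m_*\le|M'|\le M_*$. The analogue of Lemma~\ref{lem:safe-center-en} then goes as follows. If $C_{\mathrm{mob}}H^{2-\tau_1}\le\frac{c_{\mathrm F}}4|J|$, then for all large $Q$ Lemma~\ref{lem:farey-lower-en} supplies at least $c_{\mathrm F}|J|Q^2$ reduced fractions in $J$. Proposition~\ref{prop:mobius-scarcity} bounds the dangerous ones by $\frac{c_{\mathrm F}}4|J|Q^2+C_AQ$, so a safe reduced $r=p/q\in J$ with $Q\le q<2Q$ exists, satisfying $|M(r)-a/b|>2b^{-\tau_1}+4M_*Q^{-A}$ for all $H\le b<T_1(Q,A)$.

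\textbf{Induction.} Set $A_n=n+3$. Take $I_0=I_*$ and $J_n$ the middle third of $I_n$, and choose $T_0$ so that (F7) holds with $C_{\mathrm{mob}},\tau_1$. At stage $n$, pick $Q_n$ large enough that the following hold:
\begin{itemize}
\item the safe-center lemma applies;
\item $Q_n>2q_{n-1}$;
\item $(2Q_n)^{-A_n}<|I_n|/3$;
\item $Q_n^{A_n/(\tau_1-3)}\ge2T_n+2$;
\item $3C_{\mathrm{mob}}\cdot2\cdot2^{A_n+\tau_1-2}Q_n^{-A_n/(\tau_1-3)}\le c_{\mathrm F}/4$, where the factor $2$ plays the role of $\Lambda_{n+1}$, since only the single point $r_n$ must be removed.
\end{itemize}
Then take $r_n$ safe in $J_n$, $R_n=(2Q_n)^{-A_n}$ and $T_{n+1}=\lfloor Q_n^{A_n/(\tau_1-3)}\rfloor$. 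For $I_{n+1}$, take a compact interval of length $R_n/2$ inside $(r_n,r_n+R_n)$, say $[r_n+R_n/4,\,r_n+3R_n/4]$.

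\textbf{Verification.} The mean value theorem transfers safety from $r_n$ to all of $I_{n+1}$: there $|M(x)-a/b|>2b^{-\tau_1}+3M_*Q_n^{-A_n}>b^{-\tau_1}$ for $T_n\le b<T_{n+1}$. The source condition $0<|x-r_n|<q_n^{-A_n}$ holds on $I_{n+1}$. The tail condition (F7)$_{n+1}$ follows from the identity $A_n-A_n(\tau_1-2)/(\tau_1-3)=-A_n/(\tau_1-3)$, exactly as in \eqref{eq:tail-ratio-next-en}.

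\textbf{Conclusion and main obstacle.} The limit point $\xi$ is Liouville, since $A_n\to\infty$ and $q_n\to\infty$. The blocks $[T_n,T_{n+1})$ cover all $b\ge T_0$, which gives $|M(\xi)-a/b|>b^{-\tau_1}$ for all large $b$, and hence $\mu(M(\xi))\le\tau_1$. The only point needing care is bookkeeping: the constants $C_{\mathrm{mob}}$ and $C_A,Q_A$ must not depend on $J_n$. Proposition~\ref{prop:mobius-scarcity} guarantees this, and even a $J$-dependent threshold would be harmless because $J_n$ is fixed before $Q_n$ is chosen. No step is a genuine obstacle; the substance lies in Proposition~\ref{prop:mobius-scarcity}.
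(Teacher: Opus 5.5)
Your proposal is correct and follows the paper's proof essentially step for step: the same safe-center argument from Proposition~\ref{prop:mobius-scarcity} and Lemma~\ref{lem:farey-lower-en}, the same choices of $A_n$, $R_n$ and $T_{n+1}$, and the same four conditions on $Q_n$. The only difference is cosmetic. You take $I_{n+1}=[r_n+R_n/4,\,r_n+3R_n/4]$, of length $R_n/2$, which leads to the constant $6C_{\mathrm{mob}}$ in the last condition, whereas the paper takes $[r_n+R_n/4,\,r_n+R_n/2]$, of length $R_n/4$, with the constant $12C_{\mathrm{mob}}$.
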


\begin{proof}
We follow the fusion argument of Section~\ref{sec:fusion}, with
Proposition~\ref{prop:mobius-scarcity} supplying the counting estimate.
There are no Wronskian zero sets to avoid, so the choice of the next
interval is simpler.

Choose a nondegenerate compact interval \(I_0\Subset V\), and use it as
\(I_*\) in Proposition~\ref{prop:mobius-scarcity}. Write \(J(I)\) for
the middle third of \(I\), as in Section~\ref{sec:fusion}.
We construct nested nondegenerate compact intervals \(I_n\), strictly
increasing positive integers \(T_n\), and reduced fractions
\(r_n=p_n/q_n\).

Put \(A_n:=n+3\) and \(J_n:=J(I_n)\). Choose \(T_0\geq2\) so large
that
\begin{equation}\label{eq:mobius-initial-tail}
C_{\mathrm{mob}}T_0^{2-\tau_1}
\leq\frac{c_{\mathrm F}}4|J_0|.
\end{equation}
Suppose that \(I_n\) and \(T_n\) have been chosen and satisfy
\begin{equation}\label{eq:mobius-tail-invariant}
C_{\mathrm{mob}}T_n^{2-\tau_1}
\leq\frac{c_{\mathrm F}}4|J_n|.
\end{equation}
Proposition~\ref{prop:mobius-scarcity},
Lemma~\ref{lem:farey-lower-en}, and
\eqref{eq:mobius-tail-invariant} imply that every sufficiently large
\(Q\) admits a reduced fraction
\[
r=\frac pq\in J_n,
\qquad Q\leq q<2Q,
\]
satisfying
\begin{equation}\label{eq:mobius-safe-center}
\abs{M(r)-a/b}
>
2b^{-\tau_1}+4M_*Q^{-A_n}
\end{equation}
for every \(a\in\Z\) and every integer \(b\) with
\(T_n\leq b<T_1(Q,A_n)\).

Choose an integer \(Q_n\geq2\) large enough for this conclusion and,
simultaneously,
\begin{align}
Q_n&>2q_{n-1}\quad(n\geq1),
\label{eq:mobius-Q1}\\
(2Q_n)^{-A_n}&<\frac{|I_n|}{3},
\label{eq:mobius-Q2}\\
Q_n^{A_n/(\tau_1-3)}&\geq2T_n+2,
\label{eq:mobius-Q3}\\
12C_{\mathrm{mob}}2^{A_n+\tau_1-2}
Q_n^{-A_n/(\tau_1-3)}
&\leq\frac{c_{\mathrm F}}4.
\label{eq:mobius-Q4}
\end{align}
All quantities other than \(Q_n\) have already been fixed, and the
left-hand side of \eqref{eq:mobius-Q4} tends to zero. Select a safe
center
\[
r_n=\frac{p_n}{q_n}\in J_n,
\qquad Q_n\leq q_n<2Q_n,
\]
and put
\[
R_n:=(2Q_n)^{-A_n},
\qquad
T_{n+1}:=
\left\lfloor Q_n^{A_n/(\tau_1-3)}\right\rfloor.
\]
Condition \eqref{eq:mobius-Q3} gives \(T_{n+1}>T_n\).
Define
\begin{equation}\label{eq:mobius-next-interval}
I_{n+1}:=
\left[r_n+\frac{R_n}{4},r_n+\frac{R_n}{2}\right].
\end{equation}
Since \(r_n\in J_n\), condition \eqref{eq:mobius-Q2} gives
\(I_{n+1}\subset\operatorname{int}(I_n)\). For every \(x\in I_{n+1}\),
\begin{equation}\label{eq:mobius-source-approximation}
0<|x-r_n|<R_n<q_n^{-A_n}.
\end{equation}
Moreover, if \(T_n\leq b<T_{n+1}\), then
\(b<T_1(Q_n,A_n)\), and \eqref{eq:mobius-safe-center} together with
\(|M(x)-M(r_n)|\leq M_*|x-r_n|\) gives
\begin{equation}\label{eq:mobius-target-avoidance}
\abs{M(x)-a/b}>b^{-\tau_1}
\qquad(a\in\Z,\ x\in I_{n+1}).
\end{equation}

The interval \(I_{n+1}\) has length \(R_n/4\), so
\(|J_{n+1}|=R_n/12\). Since
\(T_{n+1}\geq\frac12Q_n^{A_n/(\tau_1-3)}\), condition
\eqref{eq:mobius-Q4} gives
\[
\frac{C_{\mathrm{mob}}T_{n+1}^{2-\tau_1}}{|J_{n+1}|}
\leq
12C_{\mathrm{mob}}2^{A_n+\tau_1-2}
Q_n^{-A_n/(\tau_1-3)}
\leq\frac{c_{\mathrm F}}4.
\]
This is \eqref{eq:mobius-tail-invariant} at stage \(n+1\), completing
the inductive step.

The nested intervals have diameters tending to zero, and hence
\(\bigcap_{n\geq0}I_n=\{\xi\}\) for some \(\xi\in V\). From
\eqref{eq:mobius-Q1} and \eqref{eq:mobius-source-approximation},
\(q_n\to\infty\) and
\[
0<|\xi-r_n|<q_n^{-A_n}.
\]
Since \(A_n\to\infty\), we have \(\xi\in\Lio\). Finally, the blocks
\([T_n,T_{n+1})\cap\Z_{>0}\) cover all sufficiently large denominators, and
\eqref{eq:mobius-target-avoidance} gives
\[
\abs{M(\xi)-a/b}>b^{-\tau_1}
\qquad(a\in\Z,\ b\geq T_0).
\]
Therefore \(\mu(M(\xi))\leq\tau_1\).
\end{proof}

Together with Lemma~\ref{lem:mobius-rational-fiber}, this treats both
alternatives in Lemma~\ref{lem:mobius-dichotomy} and yields the
degree-one classification.

\begin{proof}[Proof of Theorem~\ref{thm:intro-mobius}]
If \(M\in\Q(x)\), then Maillet's theorem gives Maillet's property.
Conversely, suppose that \(M\notin\Q(x)\). By
Lemma~\ref{lem:mobius-dichotomy}, either alternative \textup{(a)} or
alternative \textup{(b)} holds. In the first case,
Lemma~\ref{lem:mobius-rational-fiber} produces, in every open interval
on which \(M\) is defined, a Liouville number whose image is rational.
In the second case, Lemma~\ref{lem:mobius-finite-escape} produces a
Liouville number whose image has finite irrationality exponent. In
either case Maillet's property fails. This proves both assertions.
\end{proof}

\subsection{Nonlinear sharpness}
\label{subsec:nonlinear-sharpness}

M\"obius rigidity cannot be extended to higher degree. Recall that a
Liouville number \(\sigma\), with continued-fraction convergents
\(p_n/q_n\), is called \emph{strong Liouville} if
\[
\abs{\sigma-p_n/q_n}<q_n^{-\omega_n}
\]
for some sequence \(\omega_n\to\infty\).
Equivalently (see \cite{Petruska1992}),
\begin{equation}\label{eq:strong-convergent-criterion}
\frac{\log q_{n+1}}{\log q_n}\longrightarrow\infty.
\end{equation}

We shall use the following consequence of Petruska's theorem
\cite[Theorem~1]{Petruska1992}: if $\sigma$ is strong Liouville
and $y\in\Lio$, then
\begin{equation}\label{eq:Petruska-product}
\sigma y\in\Q\cup\Lio.
\end{equation}
The rational alternative in \eqref{eq:Petruska-product} is essential.
To exclude it for the monomials constructed below, we shall place their
nonzero rational fibers in Mahler's classes \(U_m\), with \(m\geq2\).

Here $\mathrm h(\alpha)$ denotes the \emph{absolute logarithmic Weil
height} of an algebraic number $\alpha$. We use the following
sequential height formulation: a transcendental number $\kappa$
belongs to $U_m$ if $m$ is the least positive integer for which there
exist algebraic numbers $\beta_n$ of degree $m$ and positive real
numbers $w_n$ such that
\[
\mathrm h(\beta_n)\to\infty,\qquad
w_n\to\infty,\qquad
|\kappa-\beta_n|
\leq e^{-w_n\mathrm h(\beta_n)}.
\]

The preceding formulation is the absolute-logarithmic-height version
of LeVeque's definition of $U_m$-numbers
\cite[Remark~2.6]{BiluMarquesMoreira2026} and, by the equivalence of
Mahler's and Koksma's classifications
\cite[Theorem~3.6]{Bugeaud2004}, defines the usual $U_m$-classes. In
particular, $U_1=\Lio$, and the classes $U_m$ are pairwise disjoint. We shall use the following gap principle in the form of
\cite[Theorem~3.1 and Corollary~3.3]{BiluMarques2026}.

\begin{lemma}[$U_m$ gap principle]
\label{lem:Um-gap-principle}
Let \(m\geq1\), let \(\kappa\in\C\) be transcendental, and let
\((\beta_n)\) be algebraic numbers of degree at most \(m\). Put
\(H_n:=\mathrm h(\beta_n)\), and let \((w_n)\) be a sequence of
positive real numbers. Suppose that
\[
H_n\to\infty,
\qquad
w_n\to\infty,
\qquad
|\kappa-\beta_n|\leq Ce^{-w_nH_n},
\qquad
H_{n+1}\leq Bw_nH_n
\]
for fixed constants \(B,C>0\). If \(\beta_n\) has degree exactly
\(m\) for all sufficiently large \(n\), then \(\kappa\in U_m\).
\end{lemma}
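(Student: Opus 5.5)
The plan is to split the statement into an existence part and a minimality part. Existence comes directly from the hypotheses. Minimality comes from a Liouville-type separation between algebraic numbers of different degrees, with the growth condition $H_{n+1}\leq Bw_nH_n$ used to find a good $\beta_n$ at every height scale.

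\emph{Step 1 (approximations of degree $m$ exist).} Put $w_n':=w_n-(\log C)/H_n$. Since $H_n\to\infty$ and $w_n\to\infty$, we get $w_n'\to\infty$ and $|\kappa-\beta_n|\leq e^{-w_n'H_n}$. For large $n$ the $\beta_n$ have degree exactly $m$. So $\kappa$ admits approximations of the kind required in the definition of $U_m$, with degree $m$, and the least admissible degree $m_0$ satisfies $m_0\leq m$. It remains to show that no degree $k<m$ admits such a sequence.

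\emph{Step 2 (Liouville separation).} For distinct algebraic $\alpha,\beta$ of degrees at most $m$, I use the standard inequality
\[
|\alpha-\beta|\geq e^{-D(\mathrm h(\alpha)+\mathrm h(\beta)+\log2)},\qquad D:=m^2.
\]
It follows by applying the product formula to $\alpha-\beta$, whose height is at most $\mathrm h(\alpha)+\mathrm h(\beta)+\log 2$ and whose degree is at most $m^2$. Now suppose, for contradiction, that there are algebraic $\gamma_j$ of degree $k<m$ with $h_j:=\mathrm h(\gamma_j)\to\infty$ and $|\kappa-\gamma_j|\leq e^{-v_jh_j}$, where $v_j\to\infty$. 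Fix $j$ large, write $\gamma=\gamma_j$, $h=h_j$, $w=v_j$, and assume $w\geq16D^2B$.

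\emph{Step 3 (choosing the comparison index; this is the main obstacle).} The difficulty is to pick a single $n$ for which both error terms $e^{-wh}$ and $Ce^{-w_nH_n}$ lie below the Liouville bound $e^{-D(h+H_n+\log2)}$. A naive threshold on $w_nH_n$ does not work. It only bounds $H_n$ through $B$, and the inequality then fails when $DB\geq1$.

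Instead I take $n$ maximal with $H_n\leq wh/(4D)$. Such an $n$ exists, and it tends to infinity with $j$, because $H_n\to\infty$ and the threshold tends to infinity. With this choice:
\begin{itemize}
\item $wh\geq4DH_n$, and also $wh\geq4Dh$ since $w\geq 4D$.
\item Maximality gives $H_{n+1}>wh/(4D)$, and the growth hypothesis then gives $w_nH_n\geq H_{n+1}/B>wh/(4DB)\geq4Dh$.
\item Since $n$ is large, $w_n\geq4D$, so $w_nH_n\geq4DH_n$.
\end{itemize}
Hence
\[
|\gamma-\beta_n|\leq|\gamma-\kappa|+|\kappa-\beta_n|\leq(1+C)\,e^{-2D(h+H_n)}.
\]

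\emph{Step 4 (contradiction).} For $n$ large, $\beta_n$ has degree $m$ while $\gamma$ has degree $k<m$, so $\gamma\neq\beta_n$. Step 2 then gives $|\gamma-\beta_n|\geq e^{-D(h+H_n+\log 2)}$. This is incompatible with the upper bound from Step 3 once $h+H_n$ is large, which holds because $h=h_j\to\infty$. Therefore no degree $k<m$ admits such approximations, and $m$ is the least admissible degree, i.e.\ $\kappa\in U_m$.
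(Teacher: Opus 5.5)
Your proof is correct. The paper gives no argument for this lemma; it imports it from Bilu--Marques (Theorem~3.1 and Corollary~3.3 there). So what you have written is a self-contained replacement for an external citation, not a variant of a proof in the text.

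Your route is the standard gap-principle argument, and each step holds:
\begin{itemize}
\item Existence of degree-$m$ approximations is immediate once $C$ is absorbed into $w_n$, after discarding the finitely many $n$ with $H_n\le 0$ or $w_n'\le 0$.
\item Liouville's inequality $|\alpha-\beta|\ge e^{-m^2(\mathrm h(\alpha)+\mathrm h(\beta)+\log 2)}$ follows correctly from $|\gamma|\ge e^{-[K:\Q]\mathrm h(\gamma)}$ for $0\neq\gamma\in K=\Q(\alpha,\beta)$, together with $\mathrm h(\alpha-\beta)\le\mathrm h(\alpha)+\mathrm h(\beta)+\log 2$.
\item The choice of $n$ maximal with $H_n\le wh/(4D)$ does exactly what is needed. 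The upper bound on $H_n$ makes $e^{-wh}\le e^{-2D(h+H_n)}$. Maximality together with $H_{n+1}\le Bw_nH_n$ forces $w_nH_n>wh/(4DB)\ge 4Dh$, so the $\beta_n$-error is controlled by the same quantity.
\end{itemize}

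There is one small slip. The assumption $w\ge 16D^2B$ implies $w\ge 4D$ only when $4DB\ge 1$. Either assume $w\ge\max\{4D,16D^2B\}$, which is harmless since $v_j\to\infty$, or replace $B$ by $\max\{B,1\}$ at the outset; the hypothesis only weakens.

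What your approach buys is that the nonlinear sharpness result of Section~\ref{sec:arithmetic-boundary} no longer depends on an outside reference for this step. The cited theorem may be stated more generally, but the application in Theorem~\ref{thm:strong-nonlinear-preservers} only needs the case $H_{n+1}\le 2w_nH_n$, and your elementary proof covers it.
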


We now construct strong Liouville numbers for which the preceding gap
principle applies simultaneously to every nonzero rational fiber and
every degree \(m\geq2\). The denominator congruences enforce
irreducibility in every degree, while rapid growth ensures strong
Liouville approximation.

\begin{theorem}[Nonlinear sharpness]
\label{thm:strong-nonlinear-preservers}
There exist continuum many positive strong Liouville numbers \(\sigma\)
with the following property: for every integer \(m\geq2\) and every
\(r\in\Q^\times\), each root of
\begin{equation}\label{eq:nonlinear-fiber-equation}
X^m-\frac r\sigma
\end{equation}
is a \(U_m\)-number. Consequently, for every such \(\sigma\),
\begin{equation}\label{eq:nonlinear-preservation}
\sigma\xi^m\in\Lio
\qquad(\xi\in\Lio,\ m\geq2).
\end{equation}
In particular, in every degree \(m\geq2\) there are
Liouville-preserving polynomials in
\(\R[X]\setminus\overline{\Q}[X]\).
\end{theorem}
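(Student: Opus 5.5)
The plan is to build $\sigma$ through its continued fraction $\sigma=[a_0;a_1,a_2,\ldots]$, with convergents $p_n/q_n$. The partial quotients will be chosen so that three things hold at once. First, $\log q_{n+1}/\log q_n\to\infty$, so that $\sigma$ is strong Liouville by \eqref{eq:strong-convergent-criterion}. Second, the growth is controlled \emph{from above} as well, as Lemma~\ref{lem:Um-gap-principle} requires. Third, each numerator $p_n$ is divisible exactly once by a fresh prime $\ell_n$.

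Concretely, fix $\omega_n\to\infty$ (say $\omega_n=n$) and $a_0\geq1$. Given $p_{n-1},q_{n-1},p_{n-2}$, choose a prime $\ell_{n}\nmid p_{n-1}$ larger than all previous ones. Since $\gcd(p_{n-1},p_{n-2})=1$, the map $a\mapsto a p_{n-1}+p_{n-2}$ hits every residue class modulo $\ell_n^2$. So we can choose $a_n$ in the window
\[
q_{n-1}^{\omega_n}\leq a_n\leq 2\ell_n^2q_{n-1}^{\omega_n}
\]
with $p_n\equiv\ell_n\pmod{\ell_n^2}$. There are at least two admissible choices at each step, and distinct branches give distinct continued fractions, so we obtain continuum many $\sigma$. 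Take $\ell_n$ of size at most a power of $q_{n-1}$; this is possible by Bertrand's postulate. Then $\log q_n=(\omega_n+O(1))\log q_{n-1}$, which gives both the strong Liouville criterion and an upper bound $\log q_{n+1}\ll\omega_n\log q_n$.

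Next fix $m\geq2$, $r\in\Q^\times$, and a root $\kappa$ of $X^m-r/\sigma$; $\kappa$ is transcendental because $\sigma$ is. Set $c_n:=rq_n/p_n$, which tends to $c:=r/\sigma\neq0$. Since the roots of $X^m-c$ are simple and depend smoothly on $c$ near $c$, there is a root $\beta_n$ of $X^m-c_n$ with
\[
|\kappa-\beta_n|\ll|c-c_n|\ll|\sigma-p_n/q_n|<q_n^{-1}q_{n+1}^{-1}.
\]
For $n$ large, $\ell_n$ divides neither the numerator nor the denominator of $r$, so $v_{\ell_n}(c_n)=-1$. Hence $c_n$ is not a $p$-th power in $\Q$ for any prime $p\mid m$, and it is not of the form $-4d^4$. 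Capelli's theorem then makes $X^m-c_n$ irreducible, so $\deg\beta_n=m$ exactly.

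For the heights, $\mathrm h(\beta_n)=\frac1m\mathrm h(c_n)=\frac1m\log q_n+O(1)$, because the height of $c_n$ is $\max(|rq_n|,|p_n|)$ up to bounded factors from $r$. Write $H_n:=\mathrm h(\beta_n)$. Then
\[
|\kappa-\beta_n|\leq e^{-\log q_{n+1}}=e^{-w_nH_n},\qquad w_n:=\frac{\log q_{n+1}}{H_n}\asymp m\,\omega_{n+1}\to\infty,
\]
and $H_{n+1}\asymp\frac1m\log q_{n+1}=w_nH_n/m$. All hypotheses of Lemma~\ref{lem:Um-gap-principle} are therefore met, and $\kappa\in U_m$.

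Finally, for the consequence \eqref{eq:nonlinear-preservation}, let $\xi\in\Lio$ and $m\geq2$. Maillet's theorem gives $\xi^m\in\Lio$, and \eqref{eq:Petruska-product} then gives $\sigma\xi^m\in\Q\cup\Lio$. Suppose $\sigma\xi^m=r\in\Q$. Since $\xi\neq0$, we have $r\neq0$, and $\xi$ is a root of $X^m-r/\sigma$, so $\xi\in U_m$. This contradicts $\xi\in U_1$, because the classes $U_m$ are pairwise disjoint. The polynomial $\sigma X^m$ is then Liouville-preserving and has a transcendental coefficient.

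I expect the main obstacle to be the calibration in the first step. The window for $a_n$ must be wide enough to realize the congruence for a fresh prime, yet narrow enough that $\log q_{n+1}\leq B\,w_n\log q_n$ holds with a fixed $B$. This is why $\ell_n$ should be taken polynomially bounded in $q_{n-1}$. A second point to check is the $O(1)$ discrepancy between $\mathrm h(c_n)$ and $\log q_n$ coming from $r$ and from possible cancellation with $\gcd(r\text{-data},p_n)$; it is bounded uniformly in $n$, so it can be absorbed into $B$ and $C$.
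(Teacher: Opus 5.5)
Your proposal is correct and follows essentially the paper's route. You force exact divisibility of a convergent by a fresh prime $\ell_n$ via a congruence on $a_n$, deduce that $X^m-rq_n/p_n$ is irreducible, use $\mathrm h(\beta_n)=\frac1m\log q_n+O(1)$ with the $U_m$ gap principle, and conclude with Petruska, Maillet and the disjointness of the classes $U_m$. The paper differs only in small ways: it places $\ell_n$ on $q_n$ rather than $p_n$, excludes the primes dividing $A_jB_j$ for $j\le n$ instead of letting $\ell_n\to\infty$, and uses Eisenstein instead of Capelli. One justification is misstated: surjectivity of $a\mapsto ap_{n-1}+p_{n-2}$ modulo $\ell_n^2$ comes from $\ell_n\nmid p_{n-1}$, not from $\gcd(p_{n-1},p_{n-2})=1$.

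The ``main obstacle'' you identify is not actually one. Because $w_n$ is defined through $\log q_{n+1}$, the condition $H_{n+1}\le Bw_nH_n$ holds automatically, as your own line $H_{n+1}\asymp w_nH_n/m$ already shows. So the upper window on $a_n$ and the appeal to Bertrand are superfluous; only the lower growth bound is needed, exactly as in the paper.
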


\begin{proof}
Enumerate
\[
\Q^\times=\set{r_1,r_2,\ldots},
\qquad
r_j=\frac{A_j}{B_j},
\]
where \(A_j\in\Z\setminus\{0\}\), \(B_j\in\Z_{>0}\), and
\(\gcd(A_j,B_j)=1\). We construct
\[
\sigma=[1;a_1,a_2,\ldots],
\qquad
\frac{p_n}{q_n}=[1;a_1,\ldots,a_n],
\]
with \(q_{-1}=0\) and \(q_0=1\). Suppose that
\(a_1,\ldots,a_{n-1}\) have been chosen. Select a prime
\(\ell_n\) such that
\begin{equation}\label{eq:ell-avoidance}
\ell_n\nmid q_{n-1}\prod_{j=1}^nA_jB_j.
\end{equation}
Because \(q_{n-1}\) is invertible modulo \(\ell_n^2\), one may choose a
positive integer \(a_n\), arbitrarily large, such that
\begin{equation}\label{eq:ell-valuation}
q_n=a_nq_{n-1}+q_{n-2}
\equiv\ell_n\pmod{\ell_n^2}
\end{equation}
and
\begin{equation}\label{eq:q-strong-growth}
q_n>q_{n-1}^{n+1}.
\end{equation}
Thus \(\ord_{\ell_n}(q_n)=1\), while
\(\ell_n\nmid p_n\) because \(\gcd(p_n,q_n)=1\). Condition
\eqref{eq:q-strong-growth} implies
\eqref{eq:strong-convergent-criterion}; hence \(\sigma\) is strong
Liouville, and \(1<\sigma<2\).

Fix \(j\geq1\), \(m\geq2\), and write
\(\sigma_n=p_n/q_n\). For \(n\geq j\), the polynomial
\begin{equation}\label{eq:Eisenstein-polynomial}
G_{j,m,n}(X):=B_jp_nX^m-A_jq_n
\end{equation}
is Eisenstein at \(\ell_n\). Indeed, \(\ell_n\) divides the constant
coefficient exactly once and does not divide the leading coefficient;
the intermediate coefficients vanish. Hence every root of
\[
X^m-\frac{r_j}{\sigma_n}
\]
has degree exactly \(m\) over \(\Q\).

Fix a root \(\beta\in\C\) satisfying \(\beta^m=r_j/\sigma\).
Since \(\sigma\) is transcendental, so is \(\beta\).
The holomorphic implicit-function theorem, applied to
\(zw^m-r_j=0\) at \((\sigma,\beta)\), supplies a local branch
\(u(z)\) with \(u(\sigma)=\beta\) and \(u(z)^m=r_j/z\). For all
sufficiently large \(n\), put \(\beta_n:=u(\sigma_n)\). Then
\([\Q(\beta_n):\Q]=m\), and, since \(1<\sigma,\sigma_n<2\),
\begin{equation}\label{eq:beta-approximation}
|\beta-\beta_n|
\ll_{j,m}\abs{\sigma-\sigma_n}
<\frac{C_{j,m}}{q_nq_{n+1}}.
\end{equation}
Moreover,
\[
m\,\mathrm h(\beta_n)
=
\mathrm h\left(\frac{A_jq_n}{B_jp_n}\right).
\]
The greatest common divisor of \(A_jq_n\) and \(B_jp_n\) divides
\(|A_jB_j|\), while \(1<p_n/q_n<2\). Therefore, with
\(H_n:=\mathrm h(\beta_n)\),
\begin{equation}\label{eq:beta-height-asymptotic}
H_n=\frac1m\log q_n+O_{j,m}(1).
\end{equation}
For sufficiently large \(n\), we have \(H_n>0\); set
\[
w_n:=\frac{\log q_{n+1}}{2H_n}.
\]
Then \(w_n\to\infty\), and \eqref{eq:beta-approximation} gives
\[
|\beta-\beta_n|\leq C_{j,m}e^{-w_nH_n}.
\]
Since \(m\geq2\), equation \eqref{eq:beta-height-asymptotic} also gives
\[
H_{n+1}\leq\log q_{n+1}=2w_nH_n
\]
for all sufficiently large \(n\). After discarding finitely many
terms, Lemma~\ref{lem:Um-gap-principle} therefore yields
\(\beta\in U_m\). Since \(j,m\), and the chosen root were arbitrary,
the asserted fiber property follows.

Now let \(\xi\in\Lio\) and \(m\geq2\). Maillet's theorem gives
\(\xi^m\in\Lio\), and \eqref{eq:Petruska-product} gives
\[
\sigma\xi^m\in\Q\cup\Lio.
\]
If \(\sigma\xi^m=r\in\Q\), then \(r\neq0\) and \(\xi\) is a root of
\(X^m-r/\sigma\); hence \(\xi\in U_m\), contrary to
\(\xi\in U_1\). Thus \eqref{eq:nonlinear-preservation} holds.

Finally, at every stage the prescribed congruence class modulo
\(\ell_n^2\) contains infinitely many positive integers satisfying
\eqref{eq:q-strong-growth}. Choosing two choices at each stage gives
a binary tree of continuum many continued fractions, every one of which
satisfies the preceding argument.
\end{proof}

\begin{proof}[Proof of Theorem~\ref{thm:intro-nonlinear}]
This follows from \eqref{eq:nonlinear-preservation} in
Theorem~\ref{thm:strong-nonlinear-preservers}.
\end{proof}

Theorems~\ref{thm:intro-mobius} and
\ref{thm:strong-nonlinear-preservers} show that coefficient rigidity
has a sharp break between degree one and higher degree. They do not
classify all rational preservers, but they show that the real
coefficient field in Theorem~\ref{thm:intro-mahler} is unavoidable:
the conclusion \(F\in\R[z]\) cannot be strengthened to
\(F\in\overline{\Q}[z]\).




\section*{Use of artificial intelligence tools}

OpenAI's ChatGPT was used solely for English-language editing and stylistic corrections. The author assumes full responsibility for the manuscript.

\end{document}